\newtheorem{theorem}{Theorem}[section]
\newtheorem{lemma}[theorem]{Lemma}
\newtheorem{cor}[theorem]{Corollary}
\newtheorem{prob}[theorem]{Problem}
\theoremstyle{definition}
\newtheorem{definition}[theorem]{Definition}
\newtheorem{remark}[theorem]{Remark}
\renewcommand{\subset}{\subseteq}
\renewcommand{\epsilon}{\varepsilon}
\renewcommand{\nu}{v}
\newcommand{\abs}[1]{\left|#1\right|}                   
\newcommand{\vnorm}[1]{\left\|#1\right\|}    
\newcommand{\vnormf}[1]{\|#1\|}                         
\newcommand{\vnormt}[1]{\left\|#1\right\|}    
\renewcommand{\d}{\mathrm{d}}
\newcommand{\R}{\mathbb{R}}
\newcommand{\embolden}[1]{\textbf {#1}}
\newcommand{\redA}{\partial^{*}\Omega}
\newcommand{\redb}{\partial^{*}}
\newcommand{\sdimn}{n}
\newcommand{\adimn}{n+1}
\newcommand{\scon}{\lambda}
\newcommand{\pcon}{\delta}
\newcommand{\smax}{\mathrm{max}_{\beta}}
\begin{document}

\title{Convex Cylinders and the Symmetric Gaussian Isoperimetric Problem}

\author{Steven Heilman}
\address{Department of Mathematics, University of Southern California, Los Angeles, CA 90089-2532}
\email{stevenmheilman@gmail.com}
\date{\today}
\thanks{Supported by NSF Grant CCF 1911216.}
\subjclass[2010]{60E15, 49J40, 53A10, 60G15}
\keywords{symmetric, Gaussian, minimal surface, calculus of variations}


\begin{abstract}
Let $\Omega$ be a measurable Euclidean set in $\mathbb{R}^{n}$ that is symmetric, i.e. $\Omega=-\Omega$, such that $\Omega\times\mathbb{R}$ has the smallest Gaussian surface area among all measurable symmetric sets of fixed Gaussian volume.  We conclude that either $\Omega$ or $\Omega^{c}$ is convex.  Moreover, except for the case $H(x)=\langle x,N(x)\rangle+\lambda$ with $H\geq0$ and $\lambda<0$, we show there exist a radius $r>0$ and an integer $0\leq k\leq n-1$ such that after applying a rotation, the boundary of $\Omega$ must satisfy $\partial\Omega= rS^{k}\times\mathbb{R}^{n-k-1}$, with $\sqrt{n-1}\leq r\leq\sqrt{n+1}$ when $k\geq1$.  Here $S^{k}$ denotes the unit sphere of $\mathbb{R}^{k+1}$ centered at the origin, and $n\geq1$ is an integer.  One might say this result nearly resolves the symmetric Gaussian conjecture of Barthe from 2001.
\end{abstract}
\maketitle
\setcounter{tocdepth}{1}
\tableofcontents
%
%
%

\section{Introduction}

In \cite{barthe01}, Barthe asked: which symmetric measurable Euclidean set of fixed Gaussian measure minimizes its Gaussian surface area?  A subset $\Omega\subset\R^{\adimn}$ is symmetric if $\Omega=-\Omega$.  It is well-known that a half space minimizes Gaussian surface area, among Euclidean sets of fixed Gaussian measure \cite{borell75,sudakov74}.  Despite many proofs of this so-called Gaussian isoperimetric inequality \cite{borell85,ledoux94,ledoux96,bobkov97,burchard01,borell03,mossel15,mossel12,eldan13,mcgonagle15,barchiesi16}, additionally restricting to symmetric sets causes additional difficulties.  Indeed, not much progress seemed to be made on Barthe's question from 2001 until \cite{heilman17}, where various candidate optimal sets were ruled out, following \cite{lamanna17}, where the local minimality and non-minimality of balls centered at the origin was demonstrated.  Also, as suggested by Morgan \cite[Conjecture 1.14]{heilman17}, the answer to Barthe's question should depend on the Gaussian volume restriction.  For example, among symmetric Euclidean sets with Gaussian measure very close to $1$ or $0$, a one-dimensional slab (or its complement) should minimize Gaussian surface area.  This fact was demonstrated in \cite{barchiesi18}, proving the first known case of Barthe's question.  The proof of \cite{barchiesi18} considers minimizing the Gaussian surface area plus a ``penalty term,'' penalizing sets that have a nonzero Gaussian center of mass.  Considerable effort is then required to show that minimizers of this functional are symmetric, and they are slabs (or complements of slabs), for an appropriately chosen constant in front of the penalty term.

In this paper, we demonstrate that convex cylinders centered at the origin are the only sets that are stable for the Gaussian surface area, after taking a product with $\R$.  Our approach is rather different than that of \cite{barchiesi18}.  We begin by adapting the Colding-Minicozzi theory of entropy from \cite{colding12a}.  In \cite{colding12a}, it is shown that for self-shrinkers, i.e. surfaces $\Sigma$ satisfying $H(x)-\langle x,N(x)\rangle=0$ for all $x\in\Sigma$, $H$ is an eigenfunction of the second variation operator $L$.  (Here $H(x)$ denotes the mean curvature of $\Sigma$ at $x$, i.e. the divergence of the exterior pointing unit normal vector $N(x)$ at $x$.  Also $L$ is defined in \eqref{three4.5}.)  Namely, $LH=2H$ (see \eqref{three9}).  This explicit eigenfunction with eigenvalue $2$ crucially allows a stability analysis of self-shrinkers with respect to the Colding-Minicozzi entropy functional.  The Colding-Minicozzi entropy of a surface $\Sigma$ is the supremum over translations and dilations of the Gaussian surface area.  Critical points of this functional are self-shrinkers \cite{colding12a}, hence the focus of \cite{colding12a} on these surfaces.

\subsection{Adapting the Colding-Minicozzi Theory}

In this paper, we are concerned with a stability analysis of the Gaussian surface area.  Critical points of the Gaussian surface area functional satisfy the more general condition that there exists some $\scon\in\R$ such that $H(x)-\langle x,N(x)\rangle=\scon$ for all $x\in\Sigma$.  In this setting, the mean curvature $H$ is no longer an eigenfunction of the second variation operator $L$ (it is an almost eigenfunction though; see \eqref{three9}.)  So, one cannot directly use the stability analysis of \cite{colding12a} for the Gaussian surface area itself.  However, appropriate modifications of the arguments of \cite{colding12a} can be made successful in certain cases.

Our general strategy is to show that the second variation operator has an eigenvalue larger than $2$.  Finding such an eigenvalue is insufficient to classify symmetric sets with minimal Gaussian surface area.  However, if we take a product of an eigenfunction multiplied by a function $x_{\adimn}^{2}-1$ in an orthogonal direction, as an input to the second variation formula, then we obtain a Gaussian volume preserving and Gaussian surface area decreasing perturbation of $\Omega\times\R$ (see Lemma \ref{orthlem}).  So, the main task is to look for (approximate) eigenfunctions of $L$ with eigenvalue larger than $2$.  To accomplish this task, it seems necessary to split into a few different cases.

In \cite{colding12a}, the stability of the entropy is split into two cases, according to whether or not the surface $\Sigma$ is mean convex (i.e. if $H$ changes sign on $\Sigma$).  In the case that $H$ changes sign on $\Sigma$, since $H$ itself is an eigenfunction of $L$ with eigenvalue $2$, another eigenfunction of $L$ with a larger eigenvalue must exist.  In our more general setting, since $H$ is no longer an eigenfunction of $L$, the observation that there exists another eigenfunction with a larger eigenvalue of \cite{colding12a} no longer applies.  So, we instead use a product of $\max(H,0)$ multiplied by a function $x_{\adimn}^{2}-1$ in an orthogonal direction, as an input to the second variation formula.

In the case that $H$ does not change sign and $H(x)-\langle x,N(x)\rangle=0$ on $\Sigma$, a curvature bound of the second fundamental form is proven in \cite{colding12a} that allows other eigenfunctions of $L$ to be used in the second variation formula, and no curvature bound needs to be proven.  In our more general setting where $H(x)-\langle x,N(x)\rangle=\scon$, this curvature bound seems difficult to prove in general, so that some functions might not be usable in the second variation formula.  Fortunately, we can split into two sub-cases.  In the case that $H$ and $\scon$ have the same sign, we can use $H$ itself in the second variation formula.  In the case that $H$ and $\scon$ have opposite signs, the curvature bound of \cite{colding12a} can be proven, allowing us to use a function of the second fundamental form $A$ in the second variation formula.  However, in this case, the second variation of $H$ itself is not helpful, so we needed to come up with another function to input into the second variation formula, namely the largest eigenvalue of $A$.

It turns out that the largest eigenvalue $\alpha$ of the second fundamental form $A$ is an almost eigenfunction of $L$ (see \eqref{three9p}).  (To avoid issues with differentiability and integration by parts, we actually use a smoothed version of the largest eigenvalue of $A$; see Lemma \ref{softeig}.)  So, $\max(\alpha,0)$ can be used in our second variation formula as long as $\alpha>0$ on a set of positive measure on $\Sigma$.  If no such set exists where $\alpha$ is positive, then all eigenvalues of $A$ are negative everywhere, so that $\Sigma$ is the boundary of a convex set.

In the case that the mean curvature $H$ does not change sign, there are two sub-cases to consider.  If $H\geq0$ and $\lambda>0$, then we can either use $H$ itself in the second variation formula, or use a Huisken-type classification from \cite{heilman17} (which does not use any second variation computations).  However, if $H\geq0$ and $\lambda<0$, then we are only able to deduce convexity of $\Omega$ or $\Omega^{c}$.  In fact, no Huisken-type classification can occur in this case.  We will discuss this case further in Section \ref{badcase}.

There is still one final case we have not mentioned, namely $H(x)=\langle x,N(x)\rangle=0$ on $\Sigma$, i.e. that $\Sigma$ is a Gaussian minimal cone.  In this last case, these sets cannot minimize Barthe's problem.  This follows by adapting an argument of \cite{zhu16}, which itself adapted an argument of Simons \cite{simons68}.  We use a perturbation of the surface that is a product of a radial and angular component.  The radial component is chosen to preserve Gaussian volume, and the angular component is chosen to have a large eigenvalue of the second variation operator.

The Colding-Minicozzi theory \cite{colding12a,colding12} was originally designed to use the Gaussian surface area to investigate singularities of mean-curvature flows.  A connection of Gaussian surface area to mean-curvature flow was established by Huisken \cite{huisken90,huisken93}, and \cite{colding12a} greatly extended this connection.  As a continuation of \cite{heilman17}, this paper instead applies the Colding-Minicozzi theory to a Gaussian isoperimetric conjecture.

\subsection{The case $H\geq0$ and $\lambda<0$}\label{badcase}

As mentioned above, in the case $H\geq0$ and $\lambda<0$, we can only conclude that $\Omega$ or $\Omega^{c}$ is convex, unlike in other cases where we can show that $\partial\Omega$ is a round cylinder centered at the origin.  The compact version of this convexity statement was proven in \cite{lee22}, though compactness was crucially used there, so it is unclear if the argument there generalizes to the noncompact case.  Part of the difficulty of this case is that Huisken's classification no longer holds \cite{huisken90,huisken93}.  Indeed, it is known that, for every integer $m\geq2$, there exists $\scon=\scon_{m}<0$ and there exists a convex embedded curve $\Gamma_{m}\subset\R^{2}$ satisfying $H(x)=\langle x,N(x)\rangle+\lambda$ as in Lemma \ref{varlem}, and such that $\Gamma_{m}$ is symmetric with respect to a rotation by an angle $2\pi/m$ (and $\Gamma_{m_{1}}\neq\Gamma_{m_{2}}$ if $m_{1}\neq m_{2}$, and also $\Gamma_{m}$ is not symmetric with respect to a rotation by an angle smaller than $2\pi/m$)  \cite[Theorem 1.2, Theorem 1.3, Proposition 3.2]{chang17}.  Consequently, $\Gamma_{m}\times\R^{\sdimn-2}\subset\R^{\adimn}$ also satisfies $H(x)=\langle x,N(x)\rangle+\lambda$.  So, Huisken's classification cannot possibly hold, at least when $\scon<0$.

In the case of even $m\geq6$, the curve $\Gamma_{m}$ can be shown to be unstable by \cite[Corollary 11.9]{heilman17}, since there are more than four nodal domains corresponding to an infinitesimal rotation of $\Gamma_{m}$.  However, it is unclear how to prove instability for the cases $m=2$ and $m=4$.  As shown at the end of \cite{chang17}, there appears to be an infinite family of curves with a single symmetry by a rotation of an angle $\pi$, and it is unclear if any of our arguments can show these curves are unstable for the Gaussian perimeter.

\subsection{Future Directions and Remaining Cases of Barthe's Problem}

The ultimate goal of Barthe's Problem \ref{prob1} is to find the minimum Gaussian perimeter of a set in $\R^{\adimn}$, where we take the infimum over all dimensions $\sdimn\geq0$.  That is, we would like to determine the isoperimetric profile $I_{\infty}$ depicted in Figure \ref{fig1}.

Theorem \ref{thm1} classifies those sets of the form $\Omega\times\R$ that are stable for the Gaussian surface area.  If a set $\Omega$ minimizes Problem \ref{prob1} and its surface area is achieved in the definition of $I_{\infty}$, then the set $\Omega\times\R$ must also be stable.  However, a priori, it could occur that for each $n\geq0$, there exists a set $\Omega_{n}\subset\R^{\adimn}$ that minimizes Problem \ref{prob1} for a measure constraint $\gamma_{\adimn}(\Omega_{n})=c\in(0,1)$, but such that $\Omega_{n}\times\R$ is unstable.  Put another way, it could occur that a value $I_{\infty}(c)$ is not achieved for any set of finite dimension.  In such a case, Theorem \ref{thm1} does not say anything about Problem \ref{prob1}.  And in fact, we do expect this to happen, but only when $c=1/2$.  It is conjectured that, for any $c\neq1/2$, $I_{\infty}(c)$ is achieved by a set of finite dimension.  And in the case $c=1/2$, $I_{\infty}(c)$ is achieved by a limit of spheres $S^{n}$ of radius approximately $\sqrt{n}$.  With this picture in mind, Theorem \ref{thm1} should cover many cases of Problem \ref{prob1} (except when $c=1/2$).  It was conjectured by Morgan that, if $\Omega\subset\R^{\adimn}$ minimizes Problem \ref{prob1}, there exists $r>0$ and there exists $0\leq k\leq n$ such that
$$\partial\Omega= rS^{k}\times\R^{n-k}.$$
We additionally conjecture that $r$ satisfies $\sqrt{n}\leq r\leq\sqrt{n+2}$ when $k\geq1$.

\begin{figure}
  \centering
  \includegraphics[width=.7\textwidth]{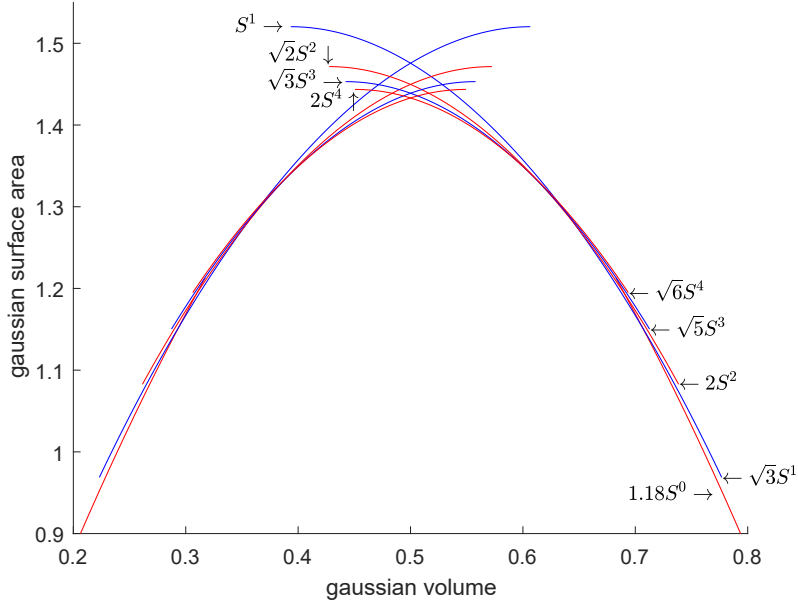}
\caption{Gaussian surface area of $B(0,s)\subset\R$ and of $B(0,r)\subset\R^{\adimn}$, where $\sqrt{n}\leq r\leq\sqrt{n+2}$ for each $1\leq n\leq 4$, together with their complements.}
\label{fig2}
\end{figure}

\section{Statement of Results}\label{secres}
We begin by defining the Gaussian density:
$$\gamma_{\sdimn}(x)\colonequals (2\pi)^{-\sdimn/2}e^{-\vnormt{x}^{2}/2},\qquad
\vnormt{x}^{2}\colonequals\sum_{i=1}^{m}x_{i}^{2},\qquad
\forall\,x=(x_{1},\ldots,x_{m})\in\R^{m}.
$$
We also denote $\gamma_{\adimn}(\cdot)$ as a measure of Lebesgue measurable sets in $\R^{\adimn}$.  For a set $\Sigma\subset\R^{\adimn}$ with Hausdorff dimension $\sdimn$, we denote its Gaussian surface area as
$$\int_{\Sigma}\gamma_{\sdimn}(x)\,\d x\colonequals\liminf_{\epsilon\to0^{+}}
\frac{1}{2\epsilon}\int_{\{x\in\R^{\adimn}\colon\exists\,y\in\Sigma,\,\vnormt{x-y}<\epsilon\}}\gamma_{\sdimn}(x)\,\d x.$$

Our main problem of interest is the following.
\begin{prob}[\embolden{Symmetric Gaussian Problem}, \cite{barthe01}]\label{prob1}
Fix $0<c<1$.  Minimize $$\int_{\partial \Omega}\gamma_{\sdimn}(x)\,\d x$$ over all (measurable) subsets $\Omega\subset\R^{\adimn}$ satisfying $\Omega=-\Omega$ and $\gamma_{\adimn}(\Omega)=c$.
\end{prob}
Unless otherwise stated, all sets discussed in this paper will be Lebesgue measurable.
\begin{remark}\label{crk}
If $\Omega$ minimizes Problem \ref{prob1}, then $\Omega^{c}$ also minimizes Problem \ref{prob1}, with $c$ replaced by $1-c$.
\end{remark}
\begin{definition}
For any integer $k\geq0$, define the $k$-dimensional sphere of radius one centered at the origin to be
$$S^{k}\colonequals\{x\in\R^{k+1}\colon\vnorm{x}=1\}.$$
\end{definition}

\begin{figure}[ht!]
\centering
\def\svgwidth{.5\textwidth}
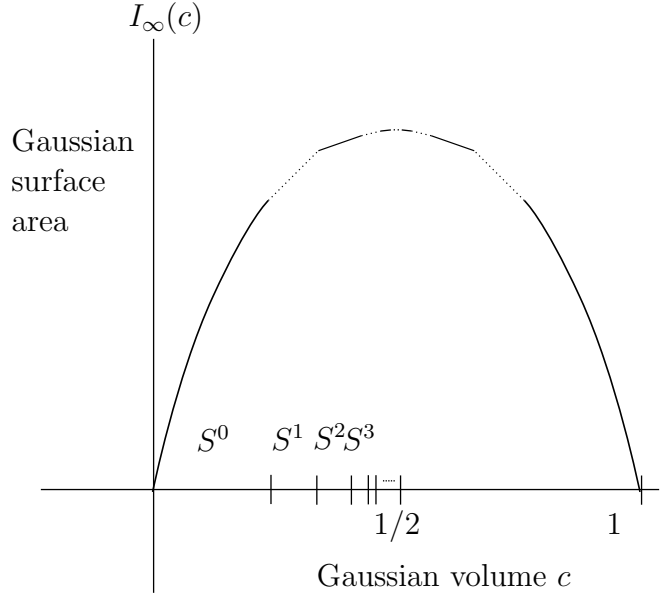
\caption{The conjectured form of the symmetric Gaussian isoperimetric profile.}
$$I_{\infty}(c)\colonequals\inf_{n\geq0}\inf_{\substack{\Omega\subset\R^{\adimn}\\ \Omega=-\Omega\\ \gamma_{\adimn}(\Omega)=c}}\int_{\partial\Omega}\gamma_{\sdimn}(x)\,\d x$$
\label{fig1}
\end{figure}

\begin{theorem}[\embolden{Main Theorem}]\label{thm1}
Let $\Omega\subset\R^{\adimn}$ be a measurable set.  Assume that $\Omega\times\R$ minimizes minimizing Problem \ref{prob1}.  Then $\Omega$ or $\Omega^{c}$ is convex.

Moreover, unless $H\geq0$ and $\lambda<0$ (or $H\leq 0$ and $\lambda>0$), $\exists$ $r>0$, $\exists$ $0\leq k\leq \sdimn$ such that
$$\partial\Omega=r S^{k}\times\R^{\sdimn-k},$$
after rotating $\Omega$ if necessary.  And if $k\geq1$, then $\sqrt{\sdimn}\leq r\leq\sqrt{\sdimn+2}$.
\end{theorem}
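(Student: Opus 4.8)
\section*{Proof proposal}

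The plan is to combine the regularity and first-variation theory for Gaussian perimeter minimizers with a Colding--Minicozzi-style eigenvalue analysis of the second variation operator $L$ of \eqref{three4.5}. First I would extract from the hypothesis that $\Omega\times\R$ minimizes Problem \ref{prob1} the standard consequences: the hypersurface $\Sigma\colonequals\partial\Omega$ is smooth off a closed singular set of high Hausdorff codimension, it is symmetric, by Lemma \ref{varlem} there is $\lambda\in\R$ with $H(x)-\langle x,N(x)\rangle=\lambda$ on $\Sigma$, and the second variation of Gaussian surface area at $\Sigma\times\R$ is nonnegative along every Gaussian-volume-preserving, symmetric (equivalently, even) normal perturbation. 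The engine of the argument is Lemma \ref{orthlem}: if $f$ is an even (approximate) eigenfunction of $L$ on $\Sigma$ with eigenvalue strictly larger than $2$, then the perturbation $f(x)(x_{\adimn}^{2}-1)$ of $\Omega\times\R$ preserves Gaussian volume to first order, since $\int_{\R}(t^{2}-1)\gamma_{1}(t)\,\d t=0$, and strictly decreases Gaussian surface area, contradicting minimality. So the whole problem reduces to producing such an $f$ on $\Sigma$, or else identifying $\Sigma$ directly.

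Next I would run the case analysis outlined in the introduction. If $\Sigma$ is a Gaussian minimal cone, i.e. $H\equiv\langle x,N\rangle\equiv 0$, I would adapt the Simons/Zhu perturbation of \cite{simons68,zhu16}: take a normal perturbation that is a product of a radial factor, chosen to preserve Gaussian volume, and an angular factor realizing an eigenvalue of $L$ above the relevant threshold; this excludes such $\Sigma$. If $H$ changes sign on $\Sigma$, I would use $f=\max(H,0)$ when $\lambda\ge0$ and $f=\max(-H,0)$ when $\lambda\le0$ (with $\max(\pm H,0)$ replaced by a smoothing where integration by parts requires it), together with the almost-eigenfunction identity $LH=2H+\lambda|A|^{2}$ of \eqref{three9}. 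Separating variables in the competitor $f(x)(x_{\adimn}^{2}-1)$ of Lemma \ref{orthlem}, using $\int_{\R}(t^{2}-1)^{2}\gamma_{1}=2$ and $\int_{\R}(2t)^{2}\gamma_{1}=4$, and noting that the corner term of $\max(\pm H,0)$ along $\{H=0\}$ vanishes there, one finds the second variation equals $\mp2\lambda\int_{\{\pm H>0\}}|A|^{2}H^{2}\gamma_{\sdimn}$, which is strictly negative unless $\{\pm H>0\}$ is null; but $|A|=0$ forces $H=0$, so this is impossible once $H$ changes sign. Hence, for a minimizer, $\Sigma$ is not a minimal cone and $H$ does not change sign; replacing $\Omega$ by $\Omega^{c}$ if necessary, I may assume $H\ge0$.

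With $H\ge0$ I would split on the sign of $\lambda$. If $\lambda>0$, then $f=H$ in Lemma \ref{orthlem} gives second variation $-2\lambda\int_{\Sigma}|A|^{2}H^{2}\gamma_{\sdimn}\le0$, strict unless $H\equiv0$; and $H\equiv0$ forces $\langle x,N\rangle\equiv-\lambda$, which makes $\Sigma$ a union of parallel hyperplanes, $\partial\Omega=rS^{0}\times\R^{\sdimn}$ -- alternatively one invokes the Huisken-type classification of \cite{heilman17} directly. If $\lambda=0$, then $\Sigma$ is a self-shrinker with $H\ge0$ and polynomial volume growth, so Huisken's classification \cite{huisken90,huisken93} (in the noncompact case, as in \cite{colding12a}) gives $\Sigma=rS^{k}\times\R^{\sdimn-k}$; feeding the stability inequality for $\Sigma\times\R$ the constant function (so that, since $x_{\adimn}^{2}-1$ shifts the eigenvalue by $-2$ and $|A|^{2}=k/r^{2}$, one gets $r^{2}\ge k$) and the degree-two spherical harmonics on the $S^{k}$ factor (which give $r^{2}\le k+2$), together with the sign of $\lambda$ and a comparison ruling out the cylinders $rS^{k}\times\R^{\sdimn-k}$ with $1\le k<\sdimn$, pins $\Sigma$ down to the round sphere $\sqrt{\sdimn}\,S^{\sdimn}$, with the radius in $[\sqrt{\sdimn},\sqrt{\sdimn+2}]$ as claimed. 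If $\lambda<0$ (the sign opposite to $H$), Huisken's classification genuinely fails and neither $H$ nor the coordinate functions $\langle N,e_{i}\rangle$ help; instead I would first prove the Colding--Minicozzi curvature bound on $|A|$ -- this is where the opposite sign of $\lambda$ is essential -- and then feed $\max(\alpha,0)(x_{\adimn}^{2}-1)$ into Lemma \ref{orthlem}, where $\alpha$ is the smoothed largest eigenvalue of $A$ from Lemma \ref{softeig}, an almost-eigenfunction of $L$ by \eqref{three9p}. If $\alpha>0$ on a set of positive measure this strictly decreases Gaussian surface area, a contradiction; so $\alpha\le0$ everywhere, every eigenvalue of $A$ is nonpositive, and $\Omega$ (or $\Omega^{c}$) is convex -- the weaker conclusion in the exceptional case.

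The main obstacle, I expect, is twofold. First, the curvature bound $|A|\le C$ when $H$ and $\lambda$ have opposite signs: the argument in \cite{colding12a} leans on $\lambda=0$, so one must redo the drift-Laplacian (Simons-type) estimate for $|A|$ carrying the extra $\lambda$-dependent terms and verify that they do not spoil the maximum-principle step precisely in this sign regime. Second, and throughout, justifying the second-variation integrations by parts for the merely Lipschitz test functions $\max(\pm H,0)$, $\max(\alpha,0)$ and their smoothings: the corner contributions along the nodal loci must vanish or have the favorable sign, one must control behavior near the (high-codimension) singular set, and the Gaussian weight must beat the polynomial volume growth of the minimizer so that no boundary term at infinity appears. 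The noncompact Simons-type step for minimal cones, and pinning the dimension $k$ in the $\lambda=0$ case, are the remaining delicate points.
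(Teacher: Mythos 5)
Your proposal follows essentially the same route as the paper: the Lemma~\ref{orthlem} perturbation $f(x)(x_{\adimn}^{2}-1)$ is the engine, the case analysis (minimal cone via the Simons/Zhu spherical--radial splitting; $H$ changes sign via $\max(\pm H,0)$ and \eqref{three9}; $H\ge 0$ with $\lambda>0$ via $f=H$; $H\ge 0$ with $\lambda<0$ via the curvature bound and the softmax eigenvalue of $A$) matches the decomposition of Theorem~\ref{thm1} into Theorems~\ref{thm3}, \ref{thm5}, \ref{thm6}, \ref{cor7}, and the integration-by-parts, polynomial-volume-growth, and singular-set concerns you flag are exactly the ones the paper addresses through Lemmas~\ref{polyvollem}, \ref{lemma39.79}, \ref{lemma39.1}, \ref{lemma39.2}, \ref{smaxlemma}. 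One small algebraic slip worth noting: in the $H$-changes-sign case the separated-variables second variation is $-2\lambda\int_{\{\pm H>0\}}|A|^{2}\,|H|\,\gamma_{\sdimn}$, not $\int|A|^{2}H^{2}$ --- since $fLf-2f^{2}=\lambda|A|^{2}h$ on $\{h>0\}$ with $h=\max(\pm H,0)$, the power of $H$ is one, not two --- though this does not affect the sign analysis.
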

Theorem \ref{thm1} will be split into several cases, resulting from the combination of Theorems \ref{thm3}, \ref{thm5}, \ref{thm6} and \ref{cor7}.

It is shown in \cite{lamanna17} that the ball centered at the origin of $\R^{\adimn}$ is a local minimum of Problem \ref{prob1}, when the ball's radius $r$ satisfies $r<\sqrt{\sdimn+2}$.  (Consequently, the complement of a ball centered at the origin is a local minimum of Problem \ref{prob1}, when the ball's radius $r$ satisfies $r<\sqrt{\sdimn+2}$, by Remark \ref{crk}.)  It is also shown in \cite{lamanna17} that a ball of radius $r>\sqrt{\sdimn+2}$ is not a local minimum of Problem \ref{prob1}.  Our results (e.g. Case 2 of Theorem \ref{cor7}) show, for a ball $B(0,r)\subset\R^{\adimn}$ of radius $r$ centered at the origin, $B(0,r)\times\R$ does not minimize problem \ref{prob1} when $r\leq\sqrt{\sdimn}$.  So, if $n\geq1$, the Gaussian surface area of $B(0,r)\subset\R^{\adimn}$ should only appear as a value in Figure \ref{fig1} when $\sqrt{n}\leq r\leq\sqrt{n+2}$.

\begin{remark}
As shown e.g. in the introduction of \cite{heilman17}, $\sqrt{n+1}S^{n}$ has asymptotic Gaussian surface area $\sqrt{2}$.
$$\lim_{n\to\infty}\int_{\{x\in\R^{\adimn}\colon \vnorm{x}=\sqrt{\adimn}\}}\gamma_{\sdimn}(x)\,\d x=\sqrt{2}.$$
In contrast, a half space with Gaussian measure $1/2$ has Gaussian surface area $\gamma_{0}(0)=1$.
\end{remark}
\begin{remark}
For any $k\geq1$, denote $c_{k}\colonequals\int_{\{x\in\R^{k+1}\colon\vnorm{x}=\sqrt{k}\}}\gamma_{k}(x)\,\d x$.  As shown in \cite[Lemma A.4]{stone94}, this sequence is decreasing: $c_{1}>c_{2}>c_{3}>\cdots$.
\end{remark}

\section{Preliminaries}\label{secpre}

We say that $\Sigma\subset\R^{\adimn}$ is an $\sdimn$-dimensional $C^{\infty}$ manifold if $\Sigma$ can be locally written as the graph of a $C^{\infty}$ function.  For any $(\adimn)$-dimensional $C^{\infty}$ manifold $\Omega\subset\R^{\adimn}$ with boundary, we denote
\begin{equation}\label{c0def}
\begin{aligned}
C_{0}^{\infty}(\Omega;\R^{\adimn})
&\colonequals\{f\colon \Omega\to\R^{\adimn}\colon f\in C^{\infty}(\Omega;\R^{\adimn}),\, f(\partial \Omega)=0,\\
&\qquad\qquad\qquad\exists\,r>0,\,f(\Omega\cap(B(0,r))^{c})=0\}.
\end{aligned}
\end{equation}
We also denote $C_{0}^{\infty}(\Omega)\colonequals C_{0}^{\infty}(\Omega;\R)$.  We let $\mathrm{div}$ denote the divergence of a vector field in $\R^{\adimn}$.  For any $r>0$ and for any $x\in\R^{\adimn}$, we let $B(x,r)\colonequals\{y\in\R^{\adimn}\colon\vnormt{x-y}\leq r\}$ be the closed Euclidean ball of radius $r$ centered at $x\in\R^{\adimn}$.

\begin{definition}[\embolden{Reduced Boundary}]
A measurable set $\Omega\subset\R^{\adimn}$ has \embolden{locally finite surface area} if, for any $r>0$,
$$\sup\left\{\int_{\Omega}\mathrm{div}(X(x))\,\d x\colon X\in C_{0}^{\infty}(B(0,r),\R^{\adimn}),\, \sup_{x\in\R^{\adimn}}\vnormt{X(x)}\leq1\right\}<\infty.$$
Equivalently, $\Omega$ has locally finite surface area if $\nabla 1_{\Omega}$ is a vector-valued Radon measure such that, for any $x\in\R^{\adimn}$, the total variation
$$
\vnormt{\nabla 1_{\Omega}}(B(x,1))
\colonequals\sup_{\substack{\mathrm{partitions}\\ C_{1},\ldots,C_{m}\,\mathrm{of}\,B(x,1) \\ m\geq1}}\sum_{i=1}^{m}\vnormt{\nabla 1_{\Omega}(C_{i})}
$$
is finite \cite{cicalese12}.

If $\Omega\subset\R^{\adimn}$ has locally finite surface area, we define the \embolden{reduced boundary} $\redA$ of $\Omega$ to be the set of points $x\in\R^{\adimn}$ such that
$$N(x)\colonequals-\lim_{r\to0^{+}}\frac{\nabla 1_{\Omega}(B(x,r))}{\vnormt{\nabla 1_{\Omega}}(B(x,r))}$$
exists, and it is exactly one element of $S^{\sdimn}$.
\end{definition}
For more background on the reduced boundary and its regularity, we refer to the discussion in Section 2 of \cite{barchiesi16}, \cite{ambrosio00} and \cite{maggi12}.  The following argument is essentially identical to \cite[Proposition 1]{barchiesi16}, so we omit the proof.

\begin{lemma}[\embolden{Existence}]\label{lemma51p}
There exists a set $\Omega\subset\R^{\adimn}$ minimizing Problem \ref{prob1}.
\end{lemma}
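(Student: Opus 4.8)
The plan is to establish existence of a minimizer for Problem \ref{prob1} by the direct method in the calculus of variations, working with sets of locally finite perimeter and the Gaussian-weighted total variation. First I would observe that the admissible class $\mathcal{A}_c \colonequals \{\Omega\subset\R^{\adimn}\colon \Omega=-\Omega,\ \gamma_{\adimn}(\Omega)=c\}$ is nonempty (a symmetric slab or centered ball realizes any value $c\in(0,1)$) and that the Gaussian perimeter functional $P_\gamma(\Omega)\colonequals\int_{\partial\Omega}\gamma_{\sdimn}\,\d x$ is bounded below by $0$, so the infimum $I\colonequals\inf_{\Omega\in\mathcal{A}_c}P_\gamma(\Omega)$ is a well-defined nonnegative real number; it is also finite since the slab has finite Gaussian perimeter. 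Take a minimizing sequence $(\Omega_j)_{j\geq1}\subset\mathcal{A}_c$ with $P_\gamma(\Omega_j)\to I$.

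Next I would pass to a convergent subsequence. The Gaussian weight $\gamma_{\sdimn}(x)$ is smooth and strictly positive, so on any fixed Euclidean ball $B(0,R)$ it is bounded above and below by positive constants; hence $\sup_j P_\gamma(\Omega_j)<\infty$ forces a uniform bound on the (unweighted) perimeter of $\Omega_j\cap B(0,R)$ for every $R$, and the sets $\Omega_j$ have uniformly bounded measure. By the standard compactness theorem for sets of finite perimeter (combined with a diagonal argument over $R=1,2,3,\dots$), there is a subsequence and a measurable set $\Omega$ with $1_{\Omega_j}\to 1_{\Omega}$ in $L^1_{\mathrm{loc}}(\R^{\adimn})$, and in fact in $L^1(\gamma_{\adimn})$ since the Gaussian measure is finite and a tail estimate (the uniform measure bound plus finiteness of $\gamma_{\adimn}$) upgrades local convergence to global $L^1(\gamma_{\adimn})$ convergence. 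This yields $\gamma_{\adimn}(\Omega)=\lim_j \gamma_{\adimn}(\Omega_j)=c$, and symmetry passes to the limit because $1_{-\Omega_j}=1_{\Omega_j}\circ(-\mathrm{id})\to 1_{-\Omega}$, so $\Omega=-\Omega$ a.e.; thus $\Omega\in\mathcal{A}_c$. Finally, lower semicontinuity of the Gaussian perimeter under $L^1_{\mathrm{loc}}$ convergence — which follows from writing $P_\gamma(\Omega)=\sup\{\int_\Omega \mathrm{div}(\gamma_{\sdimn}(x)X(x))\,\d x\colon X\in C^\infty_0,\ |X|\leq1\}$ as a supremum of functionals continuous under $L^1_{\mathrm{loc}}$ convergence — gives $P_\gamma(\Omega)\leq\liminf_j P_\gamma(\Omega_j)=I$, so $\Omega$ is a minimizer.

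The main obstacle I anticipate is the passage from local $L^1$ convergence to the convergence of the volume constraint and perimeter on all of $\R^{\adimn}$: the Gaussian weight decays at infinity, so while local compactness is automatic, one must rule out mass escaping to infinity for the volume constraint and must check that the tail contributes negligibly to both the measure and the perimeter. This is handled by the fact that $\gamma_{\adimn}$ is a finite measure (so tail volume is uniformly small) together with the uniform perimeter bound (so tail perimeter is controlled), but making this rigorous requires a careful truncation argument. Since, as the excerpt notes, this is essentially identical to \cite[Proposition 1]{barchiesi16}, where exactly this direct-method argument for the symmetric Gaussian perimeter is carried out, I would simply cite that reference for the details rather than reproduce them.
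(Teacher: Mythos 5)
Your proposal is correct and takes essentially the same approach as the paper: the paper's ``proof'' is simply to cite \cite[Proposition 1]{barchiesi16} and omit the details, and you reach for the same reference while additionally sketching the underlying direct-method argument (minimizing sequence, local compactness of sets of finite perimeter, upgrade to $L^1(\gamma_{\adimn})$ convergence using finiteness of the Gaussian measure, stability of the symmetry and volume constraints in the limit, and lower semicontinuity of the Gaussian-weighted perimeter).
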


\subsection{Submanifold Curvature}\label{seccurvature}

Here we cover some basic definitions from differential geometry of submanifolds of Euclidean space.

Let $\nabla$ denote the standard Euclidean connection, so that if $X,Y\in C_{0}^{\infty}(\R^{\adimn},\R^{\adimn})$, if $Y=(Y_{1},\ldots,Y_{\adimn})$, and if $u_{1},\ldots,u_{\adimn}$ is the standard basis of $\R^{\adimn}$, then $\nabla_{X}Y\colonequals\sum_{i=1}^{\adimn}(X (Y_{i}))u_{i}$.  Let $N$ be the outward pointing unit normal vector of an $\sdimn$-dimensional hypersurface $\Sigma\subset\R^{\adimn}$.  For any vector $x\in\Sigma$, we write $x=x^{T}+x^{N}$, so that $x^{N}\colonequals\langle x,N\rangle N$ is the normal component of $x$, and $x^{T}$ is the tangential component of $x\in\Sigma$.  We let $\nabla^{\Sigma}\colonequals(\nabla)^{T}$ denote the tangential component of the Euclidean connection.

Let $e_{1},\ldots,e_{\sdimn}$ be an orthonormal frame of $\Sigma\subset\R^{\adimn}$.  That is, for a fixed $x\in\Sigma$, there exists a neighborhood $U$ of $x$ such that $e_{1},\ldots,e_{\sdimn}$ is an orthonormal basis for the tangent space of $\Sigma$, for every point in $U$ \cite[Proposition 11.17]{lee03}.

Define the \embolden{mean curvature}
\begin{equation}\label{three0.5}
H\colonequals\mathrm{div}(N)=\sum_{i=1}^{\sdimn}\langle\nabla_{e_{i}}N,e_{i}\rangle.
\end{equation}

Define the \embolden{second fundamental form} $A=(a_{ij})_{1\leq i,j\leq\sdimn}$ so that
\begin{equation}\label{three1}
a_{ij}=\langle\nabla_{e_{i}}e_{j},N\rangle,\qquad\forall\,1\leq i,j\leq \sdimn.
\end{equation}
Compatibility of the Riemannian metric says $a_{ij}=\langle\nabla_{e_{i}}e_{j},N\rangle=-\langle e_{j},\nabla_{e_{i}}N\rangle+ e_{i}\langle N,e_{j}\rangle=-\langle e_{j},\nabla_{e_{i}}N\rangle$, $\forall$ $1\leq i,j\leq \sdimn$.  So, multiplying by $e_{j}$ and summing this equality over $j$ gives
\begin{equation}\label{three2}
\nabla_{e_{i}}N=-\sum_{j=1}^{\sdimn}a_{ij}e_{j},\qquad\forall\,1\leq i\leq \sdimn.
\end{equation}


Using $\langle\nabla_{N}N,N\rangle=0$,
\begin{equation}\label{three4}
H\stackrel{\eqref{three0.5}}{=}\sum_{i=1}^{\sdimn}\langle \nabla_{e_{i}} N,e_{i}\rangle
\stackrel{\eqref{three2}}{=}-\sum_{i=1}^{\sdimn}a_{ii}.
\end{equation}

\subsection{First and Second Variation}

We will apply the calculus of variations to solve Problem \ref{prob1}. Here we present the rudiments of the calculus of variations.

The results of this section are well known to experts in the calculus of variations, and many of these results were re-proven in \cite{barchiesi16}.

Let $\Omega\subset\R^{\adimn}$ be an $(\adimn)$-dimensional $C^{2}$ submanifold with reduced boundary $\Sigma\colonequals\redA$.  Let $N\colon\redA\to S^{\sdimn}$ denote the unit exterior normal to $\redA$.  Let $X\colon\R^{\adimn}\to\R^{\adimn}$ be a vector field.  Unless otherwise stated, we assume that $X(x)$ is parallel to $N(x)$ for all $x\in\redA$, i.e.
\begin{equation}\label{nine2.4}
X(x)=\langle X(x),N(x)\rangle N(x),\qquad\forall\, x\in\redA.
\end{equation}
Let $\mathrm{div}$ denote the divergence of a vector field.  We write $X$ in its components as $X=(X_{1},\ldots,X_{\adimn})$, so that $\mathrm{div}X=\sum_{i=1}^{\adimn}\frac{\partial}{\partial x_{i}}X_{i}$.  Let $\Psi\colon\R^{\adimn}\times(-1,1)\to\R^{\adimn}$ such that
\begin{equation}\label{nine2.3}
\Psi(x,0)=x,\qquad\qquad\frac{\d}{\d s}\Psi(x,s)=X(\Psi(x,s)),\quad\forall\,x\in\R^{\adimn},\,s\in(-1,1).
\end{equation}
For any $s\in(-1,1)$, let $\Omega^{(s)}\colonequals\Psi(\Omega,s)$.  Note that $\Omega_{0}=\Omega$.  Let $\Sigma^{(s)}\colonequals\redb\Omega^{(s)}$.
\begin{definition}
We call $\{\Omega^{(s)}\}_{s\in(-1,1)}$ as defined above a \embolden{normal variation} of $\Omega\subset\R^{\adimn}$.  We also call $\{\Sigma^{(s)}\}_{s\in(-1,1)}$ a \embolden{normal variation} of $\Sigma=\partial\Omega$.
\end{definition}

\begin{lemma}[\embolden{First Variation}, {\cite[Lemma 2.4]{heilman17}}]\label{lemma10}  Let $X\in C_{0}^{\infty}(\R^{\adimn},\R^{\adimn})$.  Let $f(x)=\langle X(x),N(x)\rangle$ for any $x\in\redA$.  Then
\begin{equation}\label{nine1}
\frac{\d}{\d s}\Big|_{s=0}\gamma_{\adimn}(\Omega^{(s)})=\int_{\redA}f(x) \gamma_{\adimn}(x)\,\d x.
\end{equation}
\begin{equation}\label{nine2}
\frac{\d}{\d s}\Big|_{s=0}\int_{\redb \Omega^{(s)}}\gamma_{\sdimn}(x)\,\d x
=\int_{\redA}(H(x)-\langle N(x),x\rangle)f(x)\gamma_{\sdimn}(x)\,\d x.
\end{equation}
\end{lemma}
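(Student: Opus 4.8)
The plan is to reduce both identities to divergence theorems after pulling the moving integrals back to the fixed domains $\Omega$ and $\Sigma\colonequals\redA$ via the flow map $\Psi(\cdot,s)$ and differentiating the integrand at $s=0$. Throughout, differentiation under the integral sign is legitimate because $X$ has compact support, so $\Psi(\cdot,s)$ is the identity outside a fixed ball and only a compact piece of $\Omega$ (resp.\ $\Sigma$) moves. For \eqref{nine1}, the change of variables $y=\Psi(x,s)$ gives $\gamma_{\adimn}(\Omega^{(s)})=\int_{\Omega}\gamma_{\adimn}(\Psi(x,s))\,\absf{\det D_{x}\Psi(x,s)}\,\d x$, where $D_{x}\Psi$ denotes the spatial derivative. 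Since $\Psi(x,0)=x$ we have $D_{x}\Psi(x,0)=\mathrm{Id}$, so Jacobi's formula yields $\frac{\d}{\d s}\big|_{s=0}\absf{\det D_{x}\Psi(x,s)}=\mathrm{tr}\,DX(x)=\mathrm{div}(X)(x)$, while $\nabla\gamma_{\adimn}(x)=-x\,\gamma_{\adimn}(x)$ yields $\frac{\d}{\d s}\big|_{s=0}\gamma_{\adimn}(\Psi(x,s))=-\langle x,X(x)\rangle\,\gamma_{\adimn}(x)$. Summing, the $s$-derivative at $s=0$ of the integrand equals $\mathrm{div}\big(X(x)\gamma_{\adimn}(x)\big)$, and the Gauss--Green theorem for sets of locally finite perimeter gives $\int_{\Omega}\mathrm{div}(X\gamma_{\adimn})\,\d x=\int_{\redA}\langle X,N\rangle\gamma_{\adimn}\,\d x=\int_{\redA}f\gamma_{\adimn}\,\d x$, which is \eqref{nine1}.

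For \eqref{nine2} I would argue the same way on the hypersurface. Since $\Omega$ is $C^{2}$, $\Sigma=\redA$ is a $C^{2}$ hypersurface without boundary and, $\Psi(\cdot,s)$ being a diffeomorphism of $\R^{\adimn}$, $\redb\Omega^{(s)}=\Psi(\Sigma,s)$; thus the area formula gives $\int_{\redb\Omega^{(s)}}\gamma_{\sdimn}\,\d x=\int_{\redA}\gamma_{\sdimn}(\Psi(x,s))\,J^{\Sigma}\Psi(x,s)\,\d x$, where $J^{\Sigma}\Psi$ is the tangential Jacobian along $\Sigma$. Differentiating at $s=0$, using the standard first variation of area $\frac{\d}{\d s}\big|_{s=0}J^{\Sigma}\Psi(x,s)=\mathrm{div}_{\Sigma}X(x)$ (tangential divergence) together with $\frac{\d}{\d s}\big|_{s=0}\gamma_{\sdimn}(\Psi(x,s))=-\langle x,X(x)\rangle\gamma_{\sdimn}(x)$, the $s$-derivative of the integrand is $[\mathrm{div}_{\Sigma}X-\langle x,X\rangle]\gamma_{\sdimn}$. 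Now decompose $x=x^{T}+\langle x,N\rangle N$ and $X=X^{T}+fN$, and use $\nabla^{\Sigma}\gamma_{\sdimn}=(\nabla\gamma_{\sdimn})^{T}=-x^{T}\gamma_{\sdimn}$, to rewrite this integrand as $\mathrm{div}_{\Sigma}(X\gamma_{\sdimn})-\langle x,N\rangle f\,\gamma_{\sdimn}$. Finally, since $\Sigma$ is a closed hypersurface and $X\gamma_{\sdimn}$ is compactly supported, the tangential divergence theorem $\int_{\Sigma}\mathrm{div}_{\Sigma}(Y)\,\d x=\int_{\Sigma}H\langle Y,N\rangle\,\d x$, with $H=\mathrm{div}(N)$ as in \eqref{three0.5}, applied to $Y=X\gamma_{\sdimn}$ gives $\int_{\redA}\mathrm{div}_{\Sigma}(X\gamma_{\sdimn})\,\d x=\int_{\redA}Hf\gamma_{\sdimn}\,\d x$, and hence $\frac{\d}{\d s}\big|_{s=0}\int_{\redb\Omega^{(s)}}\gamma_{\sdimn}\,\d x=\int_{\redA}(H-\langle N,x\rangle)f\,\gamma_{\sdimn}\,\d x$, which is \eqref{nine2}.

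There is no serious obstacle here: the two changes of variables, Jacobi's formula, the first variation of the tangential Jacobian, and dominated convergence for differentiating under the integral are all routine and I would not belabor them. The one point requiring a little care is that one is integrating over the reduced boundary rather than a globally smooth closed hypersurface, so that the Gauss--Green and tangential divergence theorems genuinely apply; this is precisely what the $C^{2}$ hypothesis on $\Omega$ together with the compact support of $X$ provide, since away from a fixed ball $\Psi(\cdot,s)$ is the identity and all the relevant integrals live on a compact region on which $\redA$ is a $C^{2}$ manifold. As the statement is quoted from \cite[Lemma 2.4]{heilman17}, one may alternatively simply cite that reference.
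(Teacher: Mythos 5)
Your proof is correct, and since the paper itself provides no argument for this lemma but simply cites \cite[Lemma 2.4]{heilman17}, you have supplied the standard details that the paper delegates to its reference. Your approach — pulling back by the flow $\Psi(\cdot,s)$, differentiating the Jacobian (full determinant in the domain case, tangential Jacobian in the boundary case), and then applying Gauss--Green respectively the tangential divergence theorem — is exactly the textbook route, and the sign conventions line up with the paper's: with $H=\mathrm{div}(N)$ as in \eqref{three0.5}, the identity $\int_{\Sigma}\mathrm{div}_{\Sigma}Y\,\d A=\int_{\Sigma}H\langle Y,N\rangle\,\d A$ carries the correct sign, and the algebraic check $[\mathrm{div}_{\Sigma}X-\langle x,X\rangle]\gamma_{\sdimn}=\mathrm{div}_{\Sigma}(X\gamma_{\sdimn})-\langle x,N\rangle f\gamma_{\sdimn}$ indeed holds after decomposing $x$ and $X$ into tangential and normal parts. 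A minor observation: you do not actually need $X$ to be normal to $\Sigma$ for \eqref{nine2} (the tangential component contributes only a reparametrization and drops out via the divergence theorem), which is consistent with the paper's convention of \eqref{nine2.4} being an assumption one is free to impose rather than a hypothesis required by the formula. The only point worth flagging is that differentiation under the integral for the hypersurface term uses the $C^{2}$ regularity of $\Omega$ in the ambient setup of that subsection (so that $\redA=\partial\Omega$ and the tangential Jacobian is $C^{1}$ in $s$ locally uniformly), and you correctly note this.
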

%
%

\section{Variations and Regularity}

In this section, we show that a minimizer of Problem \ref{prob1} exists, and the boundary of the minimizer is $C^{\infty}$ except on a set of Hausdorff dimension at most $n-7$.


The results below are repeated from \cite{heilman17}.

Unless otherwise stated, all sets $\Omega\subset\R^{\adimn}$ below are assumed to be measurable sets of locally finite surface area, such that the Gaussian surface area of $\Omega$, $\int_{\redA}\gamma_{\sdimn}(x)\,\d x$ is finite.

\begin{lemma}[\embolden{First Variation for Minimizers}, {\cite[Lemma 3.1]{heilman17}}]\label{varlem}
Let $\Omega\subset\R^{\adimn}$ minimize Problem \ref{prob1}.  Let $\Sigma\colonequals\redA$.  Then there exists $\scon\in\R$ such that, for any $x\in\redA$, $H(x)-\langle x,N(x)\rangle=\scon$.
\end{lemma}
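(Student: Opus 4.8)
The plan is to run a standard Lagrange-multiplier argument on the constrained variational problem, using the two first-variation formulas from Lemma \ref{lemma10}. Since $\Omega$ minimizes Gaussian surface area among symmetric sets of fixed Gaussian volume, any normal variation that preserves both the symmetry $\Omega=-\Omega$ and the volume $\gamma_{\adimn}(\Omega)=c$ must leave the surface area stationary to first order. First I would fix two test vector fields $X_{1},X_{2}\in C_{0}^{\infty}(\R^{\adimn},\R^{\adimn})$, each parallel to $N$ on $\redA$, and each chosen \emph{odd} under $x\mapsto -x$ in the appropriate sense so that the associated flow $\Psi$ preserves symmetry (concretely, impose $X_i(-x) = -X_i(x)$, which makes $\Omega^{(s)}=-\Omega^{(s)}$ for all $s$; equivalently one can symmetrize any given field). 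Write $f_i(x)=\langle X_i(x),N(x)\rangle$ on $\redA$. By \eqref{nine1}, the volume derivative along $X_i$ is $\int_{\redA} f_i\, \gamma_{\adimn}\,\d x$.

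Next I would form a two-parameter family combining the flows of $X_1$ and $X_2$ and arrange the combination so that the volume constraint is satisfied to first order: if $\int_{\redA} f_2\,\gamma_{\adimn}\,\d x\neq 0$, then for each small $s$ one can choose $t=t(s)$ with $t(0)=0$ so that $\gamma_{\adimn}$ of the combined variation stays equal to $c$; implicit differentiation gives $t'(0)=-\big(\int_{\redA} f_1\gamma_{\adimn}\big)\big/\big(\int_{\redA} f_2\gamma_{\adimn}\big)$. Applying \eqref{nine2} to both flows and using minimality (the surface-area derivative of the constrained family vanishes at $s=0$) yields
\begin{equation*}
\int_{\redA}(H-\langle x,N\rangle)\,f_1\,\gamma_{\sdimn}\,\d x
\;=\;\frac{\int_{\redA}(H-\langle x,N\rangle)\,f_2\,\gamma_{\sdimn}\,\d x}{\int_{\redA} f_2\,\gamma_{\adimn}\,\d x}\;\int_{\redA} f_1\,\gamma_{\adimn}\,\d x .
\end{equation*}
Defining $\scon$ to be the ratio on the right (which depends only on the fixed choice of $X_2$, not on $X_1$), this says $\int_{\redA}\big(H-\langle x,N\rangle-\scon\big)\,f_1\,\gamma_{\sdimn}\,\d x=0$ — except that the two integrals against $f_1$ carry different weights, $\gamma_{\sdimn}$ versus $\gamma_{\adimn}$. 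However, on the reduced boundary $\redA$, which has Hausdorff dimension $\sdimn$, the relevant surface measure is $\gamma_{\sdimn}$, and the volume first-variation identity \eqref{nine1} should be re-expressed as a boundary integral of the same form; in the notation of this paper the exterior derivative of the indicator produces exactly $\int_{\redA} f_1\,\gamma_{\adimn}(x)\,\d x$ as a surface integral, so after matching normalizations the bracket $\int_{\redA}\big(H(x)-\langle x,N(x)\rangle-\scon\big)f_1(x)\,\gamma_{\sdimn}(x)\,\d x$ vanishes for \emph{all} admissible odd $f_1$.

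Finally, since the odd-symmetric normal fields $f_1$ are dense (in the appropriate weak sense) among functions supported on $\redA$ — one can localize near any pair of antipodal smooth boundary points and the function $H-\langle x,N\rangle-\scon$ is continuous on the smooth part of $\redA$ — the fundamental lemma of the calculus of variations forces $H(x)-\langle x,N(x)\rangle=\scon$ pointwise on $\redA$. The symmetry $\Omega=-\Omega$ causes no trouble here because $H-\langle x,N\rangle$ is automatically even under $x\mapsto -x$ (both $H$ and $\langle x,N\rangle$ are), so testing against odd $f_1$ still detects it on a full-measure subset via antipodal pairs. The main obstacle I anticipate is \emph{not} the algebra of the Lagrange multiplier but the regularity bookkeeping: one needs to know that $\redA$ is smooth enough (away from a small singular set) for \eqref{nine1}, \eqref{nine2}, and the density argument to apply, and that the constraint functional has nonvanishing derivative in some admissible direction so the multiplier exists. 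Both points are handled by the regularity theory cited just after this lemma in the paper (and in \cite{heilman17}), so in the write-up I would invoke that machinery rather than reprove it, and focus the argument on the two-parameter construction and the passage to the pointwise Euler–Lagrange equation.
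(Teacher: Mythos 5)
Your plan is the natural one, and it is essentially the Lagrange-multiplier argument behind the reference \cite[Lemma 3.1]{heilman17} that the paper cites for this lemma: form a two-parameter family of admissible symmetric normal variations, use the second flow to restore the volume constraint to first order, kill the first derivative of surface area by minimality, and read off a multiplier $\scon$ independent of the test field. The $\gamma_{\sdimn}$-versus-$\gamma_{\adimn}$ mismatch you flag is only a fixed multiplicative constant, since $\gamma_{\adimn}(x)=(2\pi)^{-1/2}\gamma_{\sdimn}(x)$ for $x\in\R^{\adimn}$, and is absorbed into $\scon$ exactly as you say.

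There is, however, a parity error in the last step that breaks the written argument. You correctly take the vector fields $X_i$ odd so that the flow preserves $\Omega=-\Omega$. But because $\Omega$ is symmetric the unit normal satisfies $N(-x)=-N(x)$, and therefore the scalars $f_i=\langle X_i,N\rangle$ are \emph{even}, not odd. You repeatedly call $f_1$ ``odd,'' and you assert that ``testing against odd $f_1$ still detects [the even function $H-\langle x,N\rangle-\scon$] via antipodal pairs.'' Taken literally that is false: the integral of an even function against an odd test function vanishes for parity reasons alone, and can detect nothing. Relatedly, the claim that the admissible $f_1$ are ``dense among functions supported on $\redA$'' is too strong; the symmetric variations produce only the even scalars. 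What actually rescues the argument is that $H$ and $\langle x,N(x)\rangle$ are themselves even on a symmetric boundary (from $N(-x)=-N(x)$ one checks $H(-x)=H(x)$ and $\langle -x,N(-x)\rangle=\langle x,N(x)\rangle$), so the quantity you need to pin down is even, and the even test functions generated by odd $X_1$ are exactly the right class: an even bump concentrated near an antipodal pair $\{x_0,-x_0\}$ of regular points of $\redA$ gives $\int_{\redA}(H-\langle x,N\rangle-\scon)\,f_1\,\gamma_{\sdimn}\,\d x\approx 2\,(H(x_0)-\langle x_0,N(x_0)\rangle-\scon)\cdot(\text{a positive constant})$, and the fundamental lemma then forces the pointwise Euler--Lagrange equation. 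So the conclusion is correct, but you must change the stated parity of $f_1$ from odd to even and rephrase the density/fundamental-lemma step accordingly; as written, that step does not go through.
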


\begin{lemma}[\embolden{Second Variation for Minimizers}, {\cite[Lemma 2.5, Lemma 3.2]{heilman17}}]\label{varlem2}
Let $\Omega\subset\R^{\adimn}$ minimize Problem \ref{prob1}.  Let $\Sigma\colonequals\redA$.  Then, for any $f\in C_{0}^{\infty}(\Sigma)$ such that $\int_{\Sigma}f(x)\gamma_{\sdimn}(x)\,\d x=0$, and such that $f(x)=f(-x)$ for all $x\in\Sigma$, there exists a vector field $X\in C_{0}^{\infty}(\R^{\adimn},\R^{\adimn})$ with $f(x)=\langle X(x),N(x)\rangle$ for all $x\in\Sigma$ such that $X(x)=X(-x)$ for all $x\in\R^{\adimn}$, and such that the variation of $\{\Omega^{(s)}\}_{s\in(-1,1)}$ corresponding to $X$ satisfies
$$\frac{\d^{2}}{\d s^{2}}\Big|_{s=0}\int_{\redb \Omega^{(s)}}\gamma_{\sdimn}(x)\,\d x=\int_{\Sigma}f(x)Lf(x)\gamma_{\sdimn}(x)\,\d x\leq0.$$
\end{lemma}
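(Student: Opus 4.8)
The plan is to establish this as the standard constrained second--variation formula for the Gaussian perimeter, following \cite[Lemma 2.5, Lemma 3.2]{heilman17}. There are three ingredients to assemble: first, realize the given $f$ by an admissible variation that respects the symmetry $\Omega=-\Omega$ and keeps the Gaussian volume fixed; second, expand the Gaussian area of the varied boundary to second order and identify the resulting quadratic form with $\int_{\Sigma}fLf\,\gamma_{\sdimn}$; third, deduce the sign from the minimality of $\Omega$. I note at the outset that regularity causes no analytic difficulty here: by the regularity theory recalled at the start of this section, $\Sigma=\redA$ is a smooth embedded hypersurface away from a closed singular set of Hausdorff dimension at most $\sdimn-7$, while any $f\in C_0^{\infty}(\Sigma)$ is supported in a compact subset of the smooth locus, so every integration by parts below takes place on a compact piece of a smooth hypersurface.

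For the construction of the variation, I would fix a tubular neighbourhood $U$ of $\mathrm{supp}(f)$ in $\R^{\adimn}$ with smooth nearest--point projection $\pi\colon U\to\Sigma$ and a cutoff $\varphi\in C_0^{\infty}(U)$ equal to $1$ near $\mathrm{supp}(f)$, and set $X_0(y)\colonequals\varphi(y)\,f(\pi(y))\,N(\pi(y))$, extended by $0$; then $X_0\in C_0^{\infty}(\R^{\adimn},\R^{\adimn})$ is normal to $\Sigma$ with $\langle X_0(x),N(x)\rangle=f(x)$ for $x\in\Sigma$. Since $\Omega=-\Omega$ forces $N(-x)=-N(x)$ while $f(-x)=f(x)$, replacing $X_0$ by the symmetrization $\tfrac12\bigl(X_0(y)-X_0(-y)\bigr)$ produces a field $X$ whose flow $\Psi$ from \eqref{nine2.3} commutes with $x\mapsto -x$, so that $\Omega^{(s)}=-\Omega^{(s)}$ for every $s$, and which still satisfies $\langle X(x),N(x)\rangle=f(x)$ on $\Sigma$. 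Finally, because the net Gaussian flux $\int_{\Sigma}f\,\gamma_{\adimn}=(2\pi)^{-1/2}\int_{\Sigma}f\,\gamma_{\sdimn}$ vanishes by hypothesis, one may further correct $X$ away from $\Sigma$ to a symmetric field whose flow preserves $\gamma_{\adimn}$, so that each $\Omega^{(s)}$ is a symmetric competitor with $\gamma_{\adimn}(\Omega^{(s)})=c$; an equally valid alternative is to keep the symmetrized $X_0$ and enforce the volume constraint by the usual two--parameter implicit--function--theorem reparametrization $t=t(s)=O(s^2)$, after which, since $\Omega$ is an unconstrained critical point of $(\text{Gaussian area})-\scon\sqrt{2\pi}\,(\text{Gaussian volume})$ by Lemma \ref{varlem}, one obtains the same second--order identity.

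For the second--order expansion, one differentiates $s\mapsto\int_{\redb\Omega^{(s)}}\gamma_{\sdimn}(x)\,\d x$ twice at $s=0$. By \eqref{nine2} the first derivative equals $\int_{\Sigma}\bigl(H(x)-\langle x,N(x)\rangle\bigr)f(x)\gamma_{\sdimn}(x)\,\d x=\scon\int_{\Sigma}f\,\gamma_{\sdimn}=0$ by Lemma \ref{varlem} and the mean--zero hypothesis on $f$. Differentiating a second time, integrating by parts against the Gaussian weight, and using \eqref{three2}, \eqref{three4} together with $H-\langle x,N\rangle\equiv\scon$ (which is what absorbs the term coming from the acceleration of the flow), the surviving terms assemble into
$$\frac{\d^{2}}{\d s^{2}}\Big|_{s=0}\int_{\redb\Omega^{(s)}}\gamma_{\sdimn}(x)\,\d x=\int_{\Sigma}f(x)\,Lf(x)\,\gamma_{\sdimn}(x)\,\d x,$$
with $L$ the operator of \eqref{three4.5}. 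This is precisely the computation of \cite[Lemma 2.5]{heilman17}, reproven in \cite{barchiesi16}, which I would simply quote.

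It then remains to invoke minimality: each $\Omega^{(s)}$ is symmetric with $\gamma_{\adimn}(\Omega^{(s)})=c$, so $s\mapsto\int_{\redb\Omega^{(s)}}\gamma_{\sdimn}$ attains its minimum at $s=0$ and its second derivative there is nonnegative; combined with the displayed identity (read with the sign convention of \eqref{three4.5}) this gives $\int_{\Sigma}fLf\,\gamma_{\sdimn}\leq0$. The step I expect to be the main obstacle is the first one: producing an $X$ that is simultaneously normal to $\Sigma$ with the prescribed boundary value $f$, compatible with the $\Z/2$--symmetry, and either Gaussian--volume--preserving or correctable at order $s^{2}$, and then verifying that the acceleration of the flow does not spoil the identity -- this last point rests squarely on the identities $\scon\int_{\Sigma}f\,\gamma_{\sdimn}=0$ and $\gamma_{\sdimn}=(2\pi)^{1/2}\gamma_{\adimn}$ on $\Sigma$. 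Everything else is a routine, though somewhat lengthy, computation already available in \cite{heilman17,barchiesi16}.
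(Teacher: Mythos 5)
Your proposal is a correct reconstruction of the standard argument; the paper itself gives no proof of this lemma but simply cites \cite[Lemma 2.5, Lemma 3.2]{heilman17}, and your three-step outline (extend $f$ to a normal field via a tubular neighbourhood, symmetrize, handle the volume constraint via the Lagrange multiplier $\scon$ from Lemma \ref{varlem} or a second--order reparametrization, then invoke minimality for the sign) is precisely what that reference does. One small remark worth making explicit: your symmetrization $\tfrac12\bigl(X_0(y)-X_0(-y)\bigr)$ produces an \emph{odd} field, which is indeed what is forced by $f(-x)=f(x)$ together with $N(-x)=-N(x)$ on a symmetric surface, and is what makes the flow commute with $x\mapsto -x$; the parity condition ``$X(x)=X(-x)$'' printed in the statement of Lemma \ref{varlem2} therefore appears to be a typo for $X(-x)=-X(x)$, and your construction quietly corrects it. Beyond that, the only point I would be more careful about is the brief claim that one ``may further correct $X$ away from $\Sigma$ to a symmetric field whose flow preserves $\gamma_{\adimn}$'' exactly: this is not needed and is more delicate than the alternative you also give (preserve the constraint only to second order and use $\scon\int_{\Sigma}f\,\gamma_{\sdimn}=0$), which is the route actually taken in \cite{heilman17,barchiesi16} and which suffices for the displayed identity.
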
%

\begin{lemma}[\embolden{Existence and Regularity}, {\cite[Lemma 3.3]{heilman17}}]\label{lemma51}
The minimum value of Problem \ref{prob1} exists.  That is, there exists a measurable set $\Omega\subset\R^{\adimn}$ 
such that $\Omega$ achieves the minimum value of Problem \ref{prob1}.  Also, $\redA$ is a $C^{\infty}$ manifold.  Moreover, if $\sdimn<7$, then $\partial\Omega\setminus\redA=\emptyset$, and if $n\geq7$, then the Hausdorff dimension of $\partial\Omega\setminus\redA$ is at most $n-7$.
\end{lemma}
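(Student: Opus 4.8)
The plan is to combine the direct method of the calculus of variations with the standard regularity theory for almost-minimizers of perimeter. Existence has essentially already been recorded (Lemma \ref{lemma51p}): take a minimizing sequence $\Omega_{j}$ of symmetric sets with $\gamma_{\adimn}(\Omega_{j})=c$; uniform bounds on the Gaussian perimeter give, after passing to a subsequence, $1_{\Omega_{j}}\to 1_{\Omega}$ in $L^{1}_{\mathrm{loc}}(\R^{\adimn})$; the symmetry $\Omega=-\Omega$ passes to the limit pointwise a.e., the volume constraint passes to the limit because $\gamma_{\adimn}$ is a finite measure with Gaussian tails (so no mass escapes to infinity), and lower semicontinuity of the Gaussian perimeter under $L^{1}_{\mathrm{loc}}$ convergence shows $\Omega$ is a minimizer. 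So the substantive content is the regularity of $\redA$, where $\Sigma\colonequals\redA$.

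\emph{Step 1: reduction to almost-minimality.} I would show $\Omega$ is a $(\Lambda,r_{0})$-minimizer of the Gaussian perimeter: there exist $\Lambda,r_{0}>0$ so that for every $B(x_{0},r)$ with $r\leq r_{0}$ and every measurable $\Omega'$ with $\Omega'\triangle\Omega\subset B(x_{0},r)$ and $\Omega'=-\Omega'$, the Gaussian perimeter of $\Omega$ in $B(x_{0},r)$ is at most that of $\Omega'$ plus $\Lambda\,\gamma_{\adimn}(\Omega'\triangle\Omega)$. Two constraints must be absorbed. For the volume constraint $\gamma_{\adimn}=c$, use a volume-fixing variation: fix $p\in\redA$ and a vector field supported near $p$ whose flow changes $\gamma_{\adimn}(\Omega)$ at unit rate (first variation, Lemma \ref{lemma10}); flowing a competitor $\Omega'$ for a short time restores the volume at a perimeter cost $O(\gamma_{\adimn}(\Omega'\triangle\Omega))$, provided $r_{0}$ is small enough that $B(x_{0},r_{0})$ avoids the support of that vector field. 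For the symmetry constraint, observe that if $x_{0}\neq 0$ then for small $r$ we have $B(x_{0},r)\cap B(-x_{0},r)=\emptyset$, so any perturbation of $\Omega$ inside $B(x_{0},r)$ can be reflected into $B(-x_{0},r)$ to produce a symmetric competitor; since $\gamma_{\sdimn}$ and $\gamma_{\adimn}$ are even, this merely doubles both sides of the minimality inequality, and $(\Lambda,r_{0})$-minimality at $x_{0}$ follows from the minimality of $\Omega$. The single point $x_{0}=0$ (the unique fixed point of $x\mapsto-x$) contributes nothing to the Hausdorff dimension bounds below, so it may be set aside.

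\emph{Step 2: Euclidean regularity and the dimension bound.} Since $\gamma_{\sdimn}$ is smooth and bounded above and below on compact sets, a $(\Lambda,r_{0})$-minimizer of the Gaussian perimeter is, on each compact set, a $(\Lambda',r_{0}')$-minimizer of the Euclidean perimeter. The standard theory for such almost-minimizers (De Giorgi, Tamanini, together with the monotonicity-and-compactness machinery of Almgren and Schoen--Simon) gives that $\redA$ is a $C^{1,\alpha}$ hypersurface for some $\alpha>0$, and Federer's dimension-reduction argument, combined with the nonexistence of singular area-minimizing hypercones in $\R^{m}$ for $m\leq 7$ (Simons), shows that $\partial\Omega\setminus\redA$ is empty when $\sdimn<7$ and has Hausdorff dimension at most $\sdimn-7$ in general. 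Finally, on the $C^{1,\alpha}$ part write $\redA$ locally as a graph of a function $u$; the Euler--Lagrange equation $H(x)-\langle x,N(x)\rangle=\scon$ from Lemma \ref{varlem} becomes a second-order quasilinear uniformly elliptic PDE for $u$ with smooth coefficients, so Schauder estimates upgrade $u$ to $C^{2,\alpha}$ and a routine bootstrap gives $u\in C^{\infty}$, hence $\redA$ is a $C^{\infty}$ manifold.

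I expect the only genuinely non-routine point to be the handling of the symmetry constraint in Step 1: one must check that a localized competitor together with its reflection still satisfies the volume constraint after the volume-fixing correction, and that the origin — where the reflection argument degenerates — does not obstruct the dimension estimate. Everything else is a direct application of well-documented machinery, which is why the statement is quoted verbatim from \cite{heilman17}.
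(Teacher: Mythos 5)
Your proposal is correct and is essentially the standard argument that the cited reference \cite[Lemma~3.3]{heilman17} (which in turn follows \cite{barchiesi16}) carries out: direct method for existence, reduction to $(\Lambda,r_0)$-almost-minimality by absorbing the volume constraint via a localized volume-fixing variation and the symmetry constraint via the reflection trick for $x_0\neq0$, then Tamanini--Federer--Simons regularity and dimension reduction, followed by Schauder bootstrap on the graphical Euler--Lagrange equation $H-\langle x,N\rangle=\scon$. The paper itself gives no proof (it simply quotes \cite{heilman17}), so there is nothing to contrast; the one point worth spelling out in a full write-up is that the symmetrized competitor should be taken as $(\Omega'\cap B(x_0,r))\cup(-(\Omega'\cap B(x_0,r)))\cup(\Omega\setminus(B(x_0,r)\cup B(-x_0,r)))$ with a \emph{symmetric} volume-fixing vector field (i.e.\ $X(-x)=-X(x)$), so that both the symmetry and the volume constraint are restored simultaneously.
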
%

\section{Eigenfunctions of L}\label{seceig}

Let $e_{1},\ldots,e_{\sdimn}$ be an orthonormal frame for an orientable $\sdimn$-dimensional hypersurface $\Sigma\subset\R^{\adimn}$ with $\partial\Sigma=\emptyset$.  Let $\Delta\colonequals\sum_{i=1}^{\sdimn}\nabla_{e_{i}}\nabla_{e_{i}}$ be the Laplacian associated to $\Sigma$. Let $\nabla\colonequals\sum_{i=1}^{\sdimn}e_{i}\nabla_{e_{i}}$ be the gradient associated to $\Sigma$.  (The symbol $\nabla_{\cdot}(\cdot)$ still denotes the Euclidean connection, and the meaning of the symbol $\nabla$ should be clear from context.)  For any $\sdimn\times\sdimn$ matrix $B=(b_{ij})_{1\leq i,j\leq\sdimn}$, define $\vnormt{B}^{2}\colonequals\sum_{i,j=1}^{\sdimn}b_{ij}^{2}$.  For any $f\in C^{\infty}(\Sigma)$, define
\begin{equation}\label{three4.3}
\mathcal{L}f\colonequals \Delta f-\langle x,\nabla f\rangle.
\end{equation}
\begin{equation}\label{three4.5}
L f\colonequals \Delta f-\langle x,\nabla f\rangle+f+\vnormt{A}^{2}f.
\end{equation}
Note that there is a factor of $2$ difference between our definition of $L$ and the definition of $L$ in \cite{colding12a}.  Below we often remove the $x$ arguments of the functions for brevity.  We extend $L$ to matrices so that $(LB)_{ij}\colonequals L(B_{ij})$ for all $1\leq i,j\leq\sdimn$, and we can similarly extend $\nabla$ to matrices.
\begin{remark}\label{rk20}
Let $f,g\in C^{\infty}(\Sigma)$.  Using \eqref{three4.5}, we get the following product rule for $L$.
\begin{flalign*}
L(fg)
&=f\Delta g+g\Delta f+2\langle \nabla f,\nabla g\rangle-f\langle x,\nabla g\rangle-g\langle x,\nabla f\rangle+\vnormt{A}^{2}fg+fg\\
&=fLg+g\mathcal{L}f+2\langle \nabla f,\nabla g\rangle.
\end{flalign*}
\end{remark}

\begin{lemma}[\embolden{$H$ is almost an eigenfunction of $L$} {\cite[Proposition 1.2]{colding15}} {\cite[Lemma 2.1]{guang15}}, {\cite[Lemma 4.1]{heilman17}}]\label{lemma30}
Let $\Sigma\subset\R^{\adimn}$ be an orientable hypersurface.  Let $\scon\in\R$.  If
\begin{equation}\label{three0}
H(x)=\langle x,N(x)\rangle+\scon,\qquad\forall\,x\in\Sigma.
\end{equation}
Then
\begin{equation}\label{three9p}
LA=2A-\scon A^{2}.
\end{equation}
\begin{equation}\label{three9}
LH=2H+\scon\vnormt{A}^{2}.
\end{equation}
\end{lemma}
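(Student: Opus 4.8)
The plan is to compute $LA$ directly using the structure equations for a hypersurface satisfying the generalized self-shrinker equation \eqref{three0}, and then take a trace (with \eqref{three4}) to obtain \eqref{three9}. First I would record Codazzi's equation, $\nabla_{e_i}a_{jk}=\nabla_{e_j}a_{ik}$ (the ambient space is flat), and Simons' identity for the Laplacian of the second fundamental form: $\Delta a_{ij}=-\sum_k \nabla_{e_k}\nabla_{e_k} a_{ij}$ re-expressed via $\nabla H$ and quadratic terms in $A$, namely the standard formula $\Delta a_{ij}=\nabla^2_{ij}H + H\sum_k a_{ik}a_{kj} - \vnormt{A}^2 a_{ij}$ (signs adjusted to this paper's conventions for $H$ and $N$). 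I would then differentiate the hypothesis \eqref{three0}: applying $\nabla_{e_i}$ gives $\nabla_{e_i}H = e_i\langle x,N\rangle = \langle e_i,N\rangle + \langle x,\nabla_{e_i}N\rangle = -\sum_j a_{ij}\langle x,e_j\rangle$ using \eqref{three2} and $\langle e_i,N\rangle=0$; in other words $\nabla H = -A x^T$ as a vector on $\Sigma$. Differentiating once more and symmetrizing with Codazzi produces an expression for $\nabla^2_{ij}H$ in terms of $\nabla_{x^T}a_{ij}$, a term $\sum_k a_{ik}a_{kj}$ (from differentiating $x^T$ tangentially, which involves $A$ again via the Weingarten map), and a linear term $a_{ij}$.

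Next I would assemble these: substituting the expression for $\nabla^2 H$ into Simons' identity and recognizing $\langle x,\nabla a_{ij}\rangle = \langle x^T,\nabla a_{ij}\rangle$, the second-derivative transport terms combine to give exactly $\Delta a_{ij} - \langle x,\nabla a_{ij}\rangle$, i.e. the $\mathcal{L}$-part of $L$ acting on $a_{ij}$. What remains is algebraic: the quadratic-in-$A$ contributions. Using $H = \langle x,N\rangle + \scon$ to replace the factor $H$ multiplying $\sum_k a_{ik}a_{kj}$ — here $\langle x,N\rangle$ is where the term $\sum_k a_{ik}a_{kj}$ gets an extra piece — and carefully collecting, one should find that the $\langle x,N\rangle\sum_k a_{ik}a_{kj}$ pieces cancel against the transport term coming from $\nabla_{x^T}x^T$, leaving $L a_{ij} = 2 a_{ij} - \scon\sum_k a_{ik}a_{kj} = (2A - \scon A^2)_{ij}$, which is \eqref{three9p}. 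For \eqref{three9}, I would trace over $i=j$: by \eqref{three4}, $-\sum_i a_{ii}=H$, so $\mathrm{tr}(LA) = L(\mathrm{tr}A) = -LH$ by linearity, while $\mathrm{tr}(2A-\scon A^2) = -2H - \scon\vnormt{A}^2$ (since $\mathrm{tr}(A^2)=\vnormt{A}^2$ and $\mathrm{tr}A = -H$), giving $LH = 2H + \scon\vnormt{A}^2$.

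The main obstacle I anticipate is bookkeeping of signs and conventions rather than any conceptual difficulty: this paper uses $H = -\sum_i a_{ii}$ (outward normal, \eqref{three4}), which differs from several standard references, and the factor-of-$2$ discrepancy with \cite{colding12a} noted after \eqref{three4.5} means Simons' identity must be transcribed with care so that the ``$+f+\vnormt{A}^2 f$'' zeroth-order part of $L$ in \eqref{three4.5} matches up. The genuinely load-bearing computation is the double differentiation of \eqref{three0}: tracking how $\nabla_{e_j}(x^T)$ decomposes — its tangential part contributes a Christoffel-type term absorbed into the covariant derivative, and its normal-direction correction $\langle x,N\rangle$ times the Weingarten map contributes precisely the quadratic term that must be reconciled with the $H\sum_k a_{ik}a_{kj}$ term from Simons via the substitution $H - \langle x,N\rangle = \scon$. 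Since the statement is quoted from \cite{colding15,guang15,heilman17}, I would in practice cite those; but the self-contained derivation above is the route I would take, and I expect it to go through with only routine (if delicate) manipulation once Simons' identity and the first-order relation $\nabla H = -Ax^T$ are in hand.
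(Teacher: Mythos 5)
Your sketch is the standard derivation (the paper itself cites \cite{colding15,guang15,heilman17} rather than giving a proof): differentiate $H=\langle x,N\rangle+\scon$ once to get $\nabla H=-Ax^{T}$, differentiate again and symmetrize with Codazzi to obtain $\nabla^{2}_{ij}H=-\langle x^{T},\nabla a_{ij}\rangle-a_{ij}-(H-\scon)\sum_{k}a_{ik}a_{kj}$, substitute into Simons' identity, and the $H\sum_{k}a_{ik}a_{kj}$ pieces cancel leaving $-\scon A^{2}$; tracing with $H=-\mathrm{tr}A$ then gives \eqref{three9}. The only flaws are bookkeeping ones you already flagged: in this paper's convention (eq.\ \eqref{three4}, \eqref{three2}), Simons' identity carries the opposite overall sign from what you wrote, namely $\Delta a_{ij}=-\nabla^{2}_{ij}H-\vnormt{A}^{2}a_{ij}-H\sum_{k}a_{ik}a_{kj}$, and the stray minus sign in ``$\Delta a_{ij}=-\sum_{k}\nabla_{e_{k}}\nabla_{e_{k}}a_{ij}$'' is inconsistent with the paper's definition $\Delta=\sum_{i}\nabla_{e_{i}}\nabla_{e_{i}}$ and should be dropped.
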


Equation \eqref{three9p} suggests that an eigenvalue $\alpha$ of $A$ should satisfy an equation similar to \eqref{three9p}.  That is, if we choose an orthonormal frame such that $A$ is diagonal at a point, then it should be the case that $L\alpha$ is nearly equal to $2\alpha-\lambda\alpha^{2}$.  (Near an umbilic point, the largest eigenvalue of $A$ might change quickly, so we should not expect $L\alpha$ to be equal to $2\alpha-\lambda\alpha^{2}$.)  Such a computation was done in \cite{lee22}.  Also, there is a technical issue, that an eigenvalue of $A$ might not be differentiable, or more importantly, we might not be able to integrate by parts using $\alpha$, i.e. we cannot use $\alpha$ in the second variation formula, Lemma \ref{varlem2}.

For these technical reasons, we instead use a softmax function in \eqref{smaxdef} to approximate the largest eigenvalue of $A$.  The function in \eqref{smaxdef} will be smooth, so we can integrate by parts with it and use it in Lemma \ref{varlem2}.  Moreover, since the function \eqref{smaxdef} does not rely on the choice of orthonormal frame, we can substitute \eqref{three9p} into the differentiation formula for \eqref{smaxdef} in \eqref{lsofeq} below.  There is an extra term that appears in \eqref{lsofeq} that results from the smoothing in \eqref{smaxdef}, but the sign of this extra term is determined, so it does not affect our calculations later very much.

Let $\beta>0$.  Define the ($\beta$-smoothed) maximum eigenvalue of $A$ by
\begin{equation}\label{smaxdef}
\smax(A)\colonequals\frac{1}{\beta}\log\Big(\mathrm{Tr}\,e^{\beta A}\Big).
\end{equation}
Here $\mathrm{Tr}$ denotes the trace of a square matrix.
\begin{lemma}\label{softeig}
\begin{equation}\label{lsofeq}
\mathcal{L}\smax(A)
=\mathrm{Tr}\Big(\frac{e^{\beta A}}{\mathrm{Tr}(e^{\beta A})}\mathcal{L} A\Big)+\beta\sum_{i=1}^{\sdimn}\mathrm{Tr}\Big(\frac{e^{\beta A}}{\mathrm{Tr}(e^{\beta A})}\Big[\nabla_{e_{i}}A-I_{\sdimn}\mathrm{Tr}\Big(\frac{e^{\beta A}}{\mathrm{Tr}(e^{\beta A})}\nabla_{e_{i}}A\Big)\Big]^{2}\Big).
\end{equation}
Consequently, if $\beta>0$,
$$
\mathcal{L}\smax(A)
\geq
\sum_{j=1}^{\sdimn}p_{j}(A)\mathcal{L}a_{jj},
$$
\end{lemma}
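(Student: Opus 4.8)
The plan is to differentiate the matrix function $\smax(A) = \beta^{-1}\log\operatorname{Tr} e^{\beta A}$ directly, using the standard formulas for the differential of $\operatorname{Tr} e^{\beta A}$, and then to organize the resulting second-order terms into the two pieces appearing in \eqref{lsofeq}. First I would record the first derivative: for a tangent direction $e_i$, since $\frac{d}{dt}\operatorname{Tr} e^{\beta(A+tB)}\big|_{t=0} = \beta\operatorname{Tr}(e^{\beta A}B)$ (the cyclicity of the trace kills the usual ordering subtleties in the Duhamel expansion), one gets $\nabla_{e_i}\smax(A) = \operatorname{Tr}\!\big(\frac{e^{\beta A}}{\operatorname{Tr} e^{\beta A}}\nabla_{e_i}A\big)$. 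Write $P \colonequals \frac{e^{\beta A}}{\operatorname{Tr} e^{\beta A}}$ for brevity; this is a symmetric positive-definite matrix of trace $1$, so its diagonal entries in a frame diagonalizing $A$ form a probability vector $(p_j(A))_j$, which is where the final inequality's coefficients come from.

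Next I would compute $\nabla_{e_i}\nabla_{e_i}\smax(A)$ by differentiating $\operatorname{Tr}(P\,\nabla_{e_i}A)$ once more. This produces two kinds of terms: one in which the second derivative $\nabla_{e_i}\nabla_{e_i}A$ lands inside $\operatorname{Tr}(P\cdot)$, and one in which the derivative hits $P$ itself. Summing over $i$ and subtracting $\langle x,\nabla\smax(A)\rangle = \operatorname{Tr}(P\,\langle x,\nabla A\rangle)$ assembles the first group into exactly $\operatorname{Tr}(P\,\mathcal{L}A)$, since $\mathcal{L}A = \Delta A - \langle x,\nabla A\rangle$ acts entrywise. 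For the second group, I would use the Daleckii--Krein / Duhamel formula for $\nabla_{e_i} P$: in the eigenbasis of $A$ it has entries $\nabla_{e_i}P_{ab} = \sum_{c,d}(\text{divided-difference kernel})_{ab,cd}(\nabla_{e_i}A)_{cd}$ minus the normalization correction coming from differentiating the denominator $\operatorname{Tr} e^{\beta A}$. The claim is that, after the dust settles, the second group equals $\beta\sum_i \operatorname{Tr}\!\big(P\,[\nabla_{e_i}A - I_n\operatorname{Tr}(P\,\nabla_{e_i}A)]^2\big)$ — i.e. it has the form "$\beta$ times a variance of $\nabla_{e_i}A$ against the weight $P$," which is manifestly $\geq 0$ because $P\succeq0$ and $[\cdot]^2$ is a square of a symmetric matrix. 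I expect the main obstacle to be precisely this bookkeeping: showing that the genuine Daleckii--Krein kernel (which depends on the eigenvalue gaps of $A$) collapses to the clean quadratic-form expression in \eqref{lsofeq}. The resolution is the convexity/operator identity that $t\mapsto \beta^{-1}\log\operatorname{Tr} e^{\beta t}$ applied to a commuting family is just ordinary soft-max, so the Hessian-type remainder is controlled by the "finite-difference $\leq$ derivative" inequality for the exponential; equivalently one checks it entry-by-entry in the eigenbasis, where the off-diagonal contributions carry the factor $\frac{e^{\beta\lambda_a}-e^{\beta\lambda_b}}{\beta(\lambda_a-\lambda_b)}$ which is $\leq \frac{1}{2}(p_a+p_b)\operatorname{Tr} e^{\beta A}/(\dots)$ by convexity of $e^{\beta t}$ — giving the stated lower bound rather than an identity for the off-diagonal part.

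Finally, to deduce the "Consequently" inequality I would apply \eqref{lsofeq}, drop the manifestly nonnegative $\beta\sum_i(\cdots)$ term to get $\mathcal{L}\smax(A) \geq \operatorname{Tr}(P\,\mathcal{L}A)$, and then diagonalize $A$ at the point in question by choosing the orthonormal frame $e_1,\dots,e_n$ appropriately. In that frame $P$ is also diagonal with entries $p_j(A) = e^{\beta a_{jj}}/\sum_k e^{\beta a_{kk}} \geq 0$, and $\mathcal{L}A$ is the entrywise application of $\mathcal{L}$, so $\operatorname{Tr}(P\,\mathcal{L}A) = \sum_{j=1}^n p_j(A)\,\mathcal{L}a_{jj}$, which is the asserted bound. (One should note the frame is only pointwise orthonormal-and-diagonalizing, but since both sides of the final inequality are evaluated pointwise this is harmless; the differentiated identity \eqref{lsofeq} itself was derived frame-independently, which is the whole point of using $\smax$.) Thus the only real content is the matrix-calculus computation of the Hessian of $\smax$ and the convexity estimate controlling its remainder term.
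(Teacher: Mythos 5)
Your proposal follows essentially the same route as the paper — differentiate $\smax(A)=\beta^{-1}\log\mathrm{Tr}(e^{\beta A})$ twice, identify the first-order part as $\mathrm{Tr}(P\,\mathcal{L}A)$ with $P\colonequals e^{\beta A}/\mathrm{Tr}(e^{\beta A})$, and argue the remainder is nonnegative — but you have in fact located a genuine gap in the paper's own argument, and the hedge at the end of your second paragraph is the right instinct. The step in the paper at equation \eqref{chat4}, which asserts
$$\nabla_{e_{i}}\bigl[\mathrm{Tr}(e^{\beta A}\nabla_{e_{i}}A)\bigr]
=\mathrm{Tr}(e^{\beta A}\nabla_{e_{i}}\nabla_{e_{i}}A)+\beta\,\mathrm{Tr}\bigl(e^{\beta A}(\nabla_{e_{i}}A)^{2}\bigr),$$
is not an identity unless $A$ and $\nabla_{e_{i}}A$ commute. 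Duhamel gives
$\mathrm{Tr}\bigl((\nabla_{e_{i}}e^{\beta A})\nabla_{e_{i}}A\bigr)=\beta\int_{0}^{1}\mathrm{Tr}\bigl(e^{s\beta A}(\nabla_{e_{i}}A)e^{(1-s)\beta A}(\nabla_{e_{i}}A)\bigr)\,ds$,
which is the Bogoliubov--Kubo--Mori quadratic form and is in general strictly smaller than $\beta\,\mathrm{Tr}\bigl(e^{\beta A}(\nabla_{e_{i}}A)^{2}\bigr)$; for $A=\mathrm{diag}(0,1)$ and $\nabla_{e_{i}}A$ the off-diagonal flip the two sides are $2(e^{\beta}-1)$ versus $\beta(1+e^{\beta})$. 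Consequently the chain \eqref{chat4}--\eqref{chat6.5} is not literally an identity chain, and \eqref{lsofeq} should read as a one-sided bound "$\le$" rather than "$=$".

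Fortunately — and this is the part you got right — the only thing the paper needs downstream (in Lemma \ref{smaxlemma} and Theorem \ref{thm6}) is the \emph{lower} bound $\mathcal{L}\smax(A)\geq\mathrm{Tr}(P\,\mathcal{L}A)$, and that bound is genuinely true. The correct identity is
$$\mathcal{L}\smax(A)=\mathrm{Tr}(P\,\mathcal{L}A)+\beta\sum_{i=1}^{n}\Bigl[\int_{0}^{1}\mathrm{Tr}\bigl(P^{s}(\nabla_{e_{i}}A)P^{1-s}(\nabla_{e_{i}}A)\bigr)\,ds-\bigl(\mathrm{Tr}(P\,\nabla_{e_{i}}A)\bigr)^{2}\Bigr],$$
and each bracket is nonnegative by Cauchy--Schwarz in the BKM inner product, taking one argument to be $I_{n}$ and using $\mathrm{Tr}(P)=1$; this is not the same positivity mechanism as the $\mathrm{Tr}\bigl(P[\,\cdot\,]^{2}\bigr)\ge0$ used in the paper, but it is the eigenbasis estimate you describe via the divided-difference kernel and convexity of $e^{\beta t}$. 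Your final step — choosing a geodesic normal frame at a point in which $A$, hence $P$, is diagonal, so that $\mathrm{Tr}(P\,\mathcal{L}A)=\sum_{j}p_{j}(A)\,\mathcal{L}a_{jj}$ — is also handled correctly, with the appropriate caveat that the frame is pointwise. In short: your account of the proof is sound, you have incidentally caught that \eqref{lsofeq} is an inequality rather than an identity, and the "Consequently" conclusion together with all subsequent uses of the lemma are unaffected.
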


\begin{proof}
Fix $i,k\in\{1,\ldots,\sdimn\}$.  Then
\begin{equation}\label{chat3}
\nabla_{e_{i}}\smax(A)
\stackrel{\eqref{smaxdef}}{=}\frac{\nabla_{e_{i}}\mathrm{Tr}(e^{\beta A})}{\beta\mathrm{Tr}(e^{\beta A})}
=\frac{\mathrm{Tr}(\nabla_{e_{i}}e^{\beta A})}{\beta\mathrm{Tr}(e^{\beta A})}
=\frac{\mathrm{Tr}(e^{\beta A}\nabla_{e_{i}}A)}{\mathrm{Tr}(e^{\beta A})}.
\end{equation}
\begin{equation}\label{chat4}
\begin{aligned}
&\nabla_{e_{i}}\nabla_{e_{i}}\smax(A)
\stackrel{\eqref{chat3}}{=}
\frac{\mathrm{Tr}(e^{\beta A})\nabla_{e_{i}}[\mathrm{Tr}(e^{\beta A}\nabla_{e_{i}}A)] -\mathrm{Tr}(e^{\beta A}\nabla_{e_{i}}A)\nabla_{e_{i}}[\mathrm{Tr}(e^{\beta A})]}
{[\mathrm{Tr}(e^{\beta A})]^{2}}\\
&\stackrel{\eqref{chat3}}{=}\frac{\mathrm{Tr}(e^{\beta A})[\mathrm{Tr}(e^{\beta A}\nabla_{e_{i}}\nabla_{e_{i}}A)+\beta\mathrm{Tr}(e^{\beta A}(\nabla_{e_{i}}A)^{2}]
-\mathrm{Tr}(e^{\beta A}\nabla_{e_{i}}A)\beta\mathrm{Tr}(e^{\beta A}\nabla_{e_{i}}A)}
{[\mathrm{Tr}(e^{\beta A})]^{2}}.
\end{aligned}
\end{equation}
Summing \eqref{chat4} over $1\leq i\leq\sdimn$, we obtain
\begin{equation}\label{chat6}
\Delta\smax(A)
=\frac{\mathrm{Tr}(e^{\beta A}\Delta A)}{\mathrm{Tr}(e^{\beta A})}+\beta\sum_{i=1}^{\sdimn}\frac{\mathrm{Tr}(e^{\beta A}(\nabla_{e_{i}}A)^{2})}{\mathrm{Tr}(e^{\beta A})}
-\beta\sum_{i=1}^{\sdimn}\Big(\frac{\mathrm{Tr}(e^{\beta A}\nabla_{e_{i}}A)}{\mathrm{Tr}(e^{\beta A})}\Big)^{2}.
\end{equation}
Note that $C\colonequals e^{\beta A}/\mathrm{Tr}(e^{\beta A})$ is a symmetric positive definite matrix with trace $1$, so the last two terms can be written as a sum of terms of the form $\mathrm{Tr}(C(D-I_{\sdimn}\mathrm{Tr}(CD))^{2})$, i.e.
\begin{equation}\label{chat6.5}
\Delta\smax(A)
=\mathrm{Tr}\Big(\frac{e^{\beta A}}{\mathrm{Tr}(e^{\beta A})}\Delta A\Big)+\beta\sum_{i=1}^{\sdimn}\mathrm{Tr}\Big(\frac{e^{\beta A}}{\mathrm{Tr}(e^{\beta A})}\Big[\nabla_{e_{i}}A-I_{\sdimn}\mathrm{Tr}\Big(\frac{e^{\beta A}}{\mathrm{Tr}(e^{\beta A})}\nabla_{e_{i}}A\Big)\Big]^{2}\Big).
\end{equation}
Recall: if $P,Q$ are symmetric positive semidefinite real matrices, $\mathrm{Tr}(PQ)=\mathrm{Tr}(P^{1/2}QP^{1/2})$, and $P^{1/2}QP^{1/2}$ is itself a symmetric positive semidefinite matrix, so that $\mathrm{Tr}(PQ)\geq0$.  Since $A$ is symmetric, so is $\nabla_{e_{i}}A$, so that \eqref{chat6.5} implies
\begin{equation}\label{chat7}
\Delta\smax(A)
\geq\mathrm{Tr}\Big(\frac{e^{\beta A}}{\mathrm{Tr}(e^{\beta A})}\Delta A\Big).
\end{equation}
Finally, using \eqref{chat3}, if $\beta>0$, then
\begin{flalign*}
\mathcal{L}\smax(A)
&\stackrel{\eqref{three4.3}}{=}
\Delta\smax(A)-\langle x,\nabla\smax(A)\rangle\\
&\stackrel{\eqref{chat7}}{\geq}
\mathrm{Tr}\Big(\frac{e^{\beta A}}{\mathrm{Tr}(e^{\beta A})}[\Delta A-\langle x,\nabla A\rangle]\Big)
\stackrel{\eqref{three4.3}}{=}\mathrm{Tr}\Big(\frac{e^{\beta A}}{\mathrm{Tr}(e^{\beta A})}\mathcal{L} A\Big).
\end{flalign*}
\end{proof}%

The main strategy used to prove Theorem \ref{thm1} is to demonstrate that $\partial\Omega$ has an eigenvalue larger than $2$, and then to use Lemma \ref{orthlem} below to conclude that $\Omega\times\R$ is unstable for Problem \ref{prob1}.  Proving that the hypothesis of Lemma \ref{orthlem} holds seems to require breaking into several different cases, as we do below in the various cases of the proof of Theorem \ref{thm1}.

\begin{lemma}\label{orthlem}
Let $\Omega\subset\R^{\sdimn}$.  Let $f\colon\Sigma\to\R$ satisfy $f(x)=f(-x)$ for all $x\in\Sigma$, $\int_{\Sigma}(f^{2}+\vnorm{\nabla f}^{2}+\abs{f\mathcal{L}f})\gamma_{\sdimn}(x)\,\d x<\infty$, and $Lf=\delta f$, or $\int_{\Sigma} fLf\gamma_{\sdimn}(x)\,\d x\geq\delta\int_{\Sigma}f^{2}\gamma_{\sdimn}(x)\,\d x$.  Let $g\colon\R^{\adimn}\to\R$ defined by
$$g(x_{1},\ldots,x_{\adimn})\colonequals(x_{\adimn}^{2}-1)f(x_{1},\ldots,x_{\sdimn}),\qquad\forall\,(x_{1},\ldots,x_{\sdimn},x_{\adimn})\in\Sigma\times\R.$$
Then $g(-x)=g(x)$ for all $x\in\R^{\adimn}$, $\int_{\Sigma\times\R}g(x)\gamma_{\sdimn}(x)\,\d x=0$, and the corresponding variation of $\Sigma\times\R$ satisfies
$$\frac{\d^{2}}{\d s^{2}}\Big|_{s=0}\int_{(\Sigma\times\R)^{(s)}}\gamma_{\sdimn}(x)dx
\leq -(\delta-2)\int_{\Sigma\times\R}\abs{g(x)}^{2}\gamma_{\sdimn}(x)dx.
$$
\end{lemma}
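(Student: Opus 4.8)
The plan is to compute the second variation on $\Sigma\times\R$ directly via the bilinear form $\langle g, Lg\rangle$ associated to the operator $L$ on the product manifold, exploiting the product structure. First I would identify the second fundamental form and the operator $L$ on $\Sigma\times\R$: since we take a product with a flat $\R$ factor, the second fundamental form of $\Sigma\times\R$ is just that of $\Sigma$ (extended trivially), so $\vnormt{A}^{2}$ is unchanged, and the Laplacian on $\Sigma\times\R$ splits as $\Delta_{\Sigma} + \partial_{x_{\adimn}}^{2}$, while the drift term $\langle x,\nabla\cdot\rangle$ splits as $\langle (x_{1},\dots,x_{\sdimn}),\nabla_{\Sigma}\cdot\rangle + x_{\adimn}\partial_{x_{\adimn}}$. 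Hence $L_{\Sigma\times\R} = L_{\Sigma} + \partial_{x_{\adimn}}^{2} - x_{\adimn}\partial_{x_{\adimn}}$, i.e. $L_{\Sigma\times\R} = L_{\Sigma} + \mathcal{L}_{\R}$ where $\mathcal{L}_{\R}h = h'' - x_{\adimn}h'$ is the one-dimensional Ornstein--Uhlenbeck operator (the $+f + \vnormt{A}^{2}f$ zero-order part stays attached to the $\Sigma$ factor only). I would verify $g(-x)=g(x)$ (immediate, since $x_{\adimn}^{2}-1$ is even and $f$ is even by hypothesis) and $\int_{\Sigma\times\R} g\,\gamma_{\sdimn}=0$, which follows from Fubini because $\int_{\R}(x_{\adimn}^{2}-1)\gamma_{1}(x_{\adimn})\,\d x_{\adimn}=0$.

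Next I would apply the product rule for $L$ from Remark \ref{rk20}, writing $g = \phi\cdot f$ with $\phi(x)=x_{\adimn}^{2}-1$ (a function of the $\R$-coordinate only, so $\nabla_{\Sigma}\phi = 0$ and $\langle\nabla\phi,\nabla f\rangle=0$ on the product). This gives $Lg = \phi Lf + f\mathcal{L}\phi$, where here $Lf$ means the $\Sigma$-operator applied to $f$ (the $\R$-derivatives of $f$ vanish) and $\mathcal{L}\phi = \phi'' - x_{\adimn}\phi' = 2 - x_{\adimn}\cdot 2x_{\adimn} = 2 - 2x_{\adimn}^{2} = -2(x_{\adimn}^{2}-1) = -2\phi$. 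So $\mathcal{L}\phi = -2\phi$, i.e. $x_{\adimn}^{2}-1$ is exactly an eigenfunction of the one-dimensional Ornstein--Uhlenbeck operator with eigenvalue $-2$ (this is the second Hermite polynomial). Therefore $Lg = \phi\,Lf - 2\phi\,f$, and hence
\begin{equation*}
\int_{\Sigma\times\R} gLg\,\gamma_{\sdimn}\,\d x = \int_{\Sigma\times\R}\phi^{2}\big(f Lf - 2 f^{2}\big)\gamma_{\sdimn}\,\d x.
\end{equation*}
Using Fubini to factor the $x_{\adimn}$-integral out of both terms (the measure $\gamma_{\sdimn}$ on $\Sigma\times\R$ factors as $\gamma_{\sdimn-1}$-type density on $\Sigma$ times $\gamma_{1}$ on $\R$, up to the normalization bookkeeping in the paper's density conventions), this equals $\big(\int_{\R}\phi^{2}\gamma_{1}\big)\cdot\int_{\Sigma}\big(fLf - 2f^{2}\big)\gamma_{\sdimn}\,\d x$.

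Now I invoke the hypothesis: either $Lf=\delta f$, in which case $\int_{\Sigma}fLf\,\gamma_{\sdimn} = \delta\int_{\Sigma}f^{2}\gamma_{\sdimn}$, or more generally $\int_{\Sigma}fLf\,\gamma_{\sdimn}\geq\delta\int_{\Sigma}f^{2}\gamma_{\sdimn}$; the integrability assumption $\int_{\Sigma}(f^{2}+\vnormt{\nabla f}^{2}+\absf{f\mathcal{L}f})\gamma_{\sdimn}<\infty$ is exactly what is needed to justify the integrations by parts (and to know $\int_{\Sigma}fLf$ makes sense), presumably by a cutoff/density argument reducing to $f\in C_{0}^{\infty}$ as in Lemma \ref{varlem2}. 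This yields
\begin{equation*}
\int_{\Sigma\times\R}gLg\,\gamma_{\sdimn}\,\d x \leq (\delta-2)\Big(\int_{\R}\phi^{2}\gamma_{1}\Big)\int_{\Sigma}f^{2}\gamma_{\sdimn}\,\d x = (\delta-2)\int_{\Sigma\times\R}g^{2}\gamma_{\sdimn}\,\d x.
\end{equation*}
Wait --- the sign: I want an upper bound of $-(\delta-2)\int g^{2}$, so I should track that the second variation equals $\int gLg$ only up to sign, or rather that the correct statement of Lemma \ref{varlem2} gives $\frac{\d^{2}}{\d s^{2}}\int\gamma_{\sdimn} = \int fLf\gamma_{\sdimn}$ with the understanding that stability means this is $\leq 0$; combining with $\int gLg \geq (\delta-2)\int g^{2}$ would give the claimed bound if the variation's second derivative is $-\int gLg$, or I should recheck the sign convention in \eqref{three4.5} versus \eqref{nine2}. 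The main obstacle I anticipate is precisely this bookkeeping: getting the sign and the Gaussian-density normalization consistent between the $\Sigma$ and $\R$ factors (the paper writes $\int_{\Sigma}\gamma_{\sdimn}$ for an $\sdimn$-dimensional surface sitting in $\R^{\adimn}$, so on $\Sigma\times\R\subset\R^{\adimn+1}$ one must be careful that the relevant density is $\gamma_{\sdimn}$ again, matching the codimension-one surface in one higher dimension), and justifying the integration by parts / Fubini under only the stated $L^{1}$-type integrability rather than compact support. Modulo that, the computation is the clean two-line argument above, whose whole content is the identity $\mathcal{L}_{\R}(x_{\adimn}^{2}-1) = -2(x_{\adimn}^{2}-1)$ together with the product rule for $L$.
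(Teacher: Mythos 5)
Your proposal is correct and follows essentially the same route as the paper: verify evenness and Fubini-based volume preservation, then apply the product rule for $L$ with the key identity $\mathcal{L}(x_{\adimn}^{2}-1)=-2(x_{\adimn}^{2}-1)$ to get $Lg=(x_{\adimn}^{2}-1)Lf-2g$, and plug into the second variation. On the sign question you flag: the paper's proof of this lemma uses $\frac{\d^{2}}{\d s^{2}}\big|_{s=0}\int_{(\Sigma\times\R)^{(s)}}\gamma_{\sdimn}=-\int gLg\,\gamma_{\sdimn}$ (the minus sign is the correct convention, consistent with the conclusion $\int fLf\leq0$ for a minimizer in Lemma \ref{varlem2}; the equality displayed in Lemma \ref{varlem2} without the minus appears to be a typo), so your resolution is the right one.
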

\begin{proof}
The property $g(x)=g(-x)$ for all $x\in\R^{\adimn}$ is evident.  By Fubini's Theorem, $g$ automatically preserves the Gaussian volume of $\Omega$, i.e.
$$\int_{\Sigma}(x_{\adimn}^{2}-1)f\gamma_{\sdimn}(x)dx=\int_{\R}(x_{\adimn}^{2}-1)\gamma_{1}(x_{\adimn})\int_{\Sigma'}f\gamma_{\sdimn-1}(x_{1},\ldots,x_{\sdimn})dx_{1}\cdots dx_{\sdimn}=0.$$
Since $f$ does not depend on $x_{\adimn}$, $\langle\nabla f,\nabla(x_{\adimn}^{2}-1)\rangle=0$, and by the product rule for $L$ (Remark \ref{rk20}) we have
\begin{equation}\label{six2}
\begin{aligned}
Lg&\stackrel{\eqref{three4.5}}{=}(x_{\adimn}^{2}-1)Lf+f\mathcal{L}(x_{\adimn}^{2}-1)+\langle\nabla f,\nabla(x_{\adimn}^{2}-1)\rangle\\
&=(x_{\adimn}^{2}-1)Lf+f(2-2x_{\adimn}^{2})
\stackrel{\eqref{three4.5}}{=}(x_{\adimn}^{2}-1)Lf-2g.
\end{aligned}
\end{equation}
Finally, from Lemma \ref{varlem2},
\begin{flalign*}
\frac{\d^{2}}{\d s^{2}}\Big|_{s=0}\int_{(\Sigma\times\R)^{(s)}}\gamma_{\sdimn}(x)dx
&=\int_{\Sigma\times\R}-gLg\gamma_{\sdimn}(x)dx
\stackrel{\eqref{six2}}{=}\int_{\Sigma\times\R}-(x_{\adimn}^{2}-1)^{2}[fLf-2f^{2}]\gamma_{\sdimn}(x)dx\\
&\leq-(\delta-2)\int_{\Sigma\times\R}g^{2}\gamma_{\sdimn}(x)dx.
\end{flalign*}
(The integration by parts here is justified by Lemma \ref{lemma39.79}.)
\end{proof}

The following Lemma says that a minimizer of Problem \ref{prob1} satisfies an a priori bound on the Euclidean volume growth of its boundary.

\begin{lemma}\label{polyvollem}
Let $\Omega\subset\R^{\adimn}$ minimize Problem \ref{prob1} with $\sdimn\geq1$.  Then $\Sigma\colonequals\redb\Omega$ has polynomial volume growth, i.e. there exists a constant $b>0$ such that the Euclidean surface area of $\Sigma$ satisfies
$$\mathrm{Vol}(\{x\in\Sigma\colon\vnorm{x}>r\})\leq b r^{\adimn},\qquad\forall\, r>1.$$
\end{lemma}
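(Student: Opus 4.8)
The plan is to exploit the first-order equation $H(x) - \langle x, N(x)\rangle = \scon$ satisfied by $\Sigma = \redb\Omega$ (Lemma \ref{varlem}), which says precisely that $\Sigma$ is a $\scon$-shifted self-shrinker, together with a monotonicity / weighted-volume argument in the style of Huisken and Colding--Minicozzi. The key observation is that the Gaussian surface area $\int_\Sigma \gamma_\sdimn(x)\,\d x$ is finite by our standing assumption, and we want to convert this finiteness of a Gaussian-weighted integral into polynomial growth of the unweighted Euclidean volume. The standard device is a localized weighted monotonicity formula for the drift Laplacian $\mathcal{L} = \Delta - \langle x, \nabla\cdot\rangle$: one shows that the function $r \mapsto r^{-\adimn} e^{-r^2/2}\,\mathrm{Vol}(\Sigma \cap B(0,r))$ (or a suitably rescaled variant adapted to the shift $\scon$) is, up to controlled error terms coming from $\scon$, almost monotone; combined with the finiteness of $\int_\Sigma \gamma_\sdimn$, this forces $\mathrm{Vol}(\Sigma \cap B(0,r)) \lesssim r^{\adimn}$.

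Concretely, the steps I would carry out are: (1) Recall from Lemma \ref{varlem} that there is $\scon \in \R$ with $H = \langle x, N\rangle + \scon$ on $\Sigma$, so $\Sigma$ is a smooth (off a codimension-$7$ set, by Lemma \ref{lemma51}) hypersurface of this special type; since the singular set has Hausdorff dimension at most $n-7$ it contributes nothing to the volume estimate and can be ignored. (2) Apply the divergence theorem on $\Sigma$ to the vector field $x^T$ (the tangential part of the position vector), using $\mathrm{div}_\Sigma(x^T) = \sdimn - H\langle x, N\rangle = \sdimn - \langle x, N\rangle^2 - \scon\langle x, N\rangle$, to get an integral identity on $\Sigma \cap B(0,r)$ relating the boundary term $\int_{\Sigma \cap \partial B(0,r)} \langle x^T, \tfrac{x}{|x|}\rangle$ to $\int_{\Sigma \cap B(0,r)}(\sdimn - \langle x,N\rangle^2 - \scon\langle x,N\rangle)$. (3) Derive from this a differential inequality for $V(r) := \mathrm{Vol}(\Sigma \cap B(0,r))$: since $\langle x, N\rangle^2 + \scon \langle x, N\rangle \geq -\scon^2/4$, one gets something like $V'(r) \geq \tfrac{\sdimn}{r} V(r) - Cr\cdot(\text{something})$, from which, after comparing with $r^{\adimn}$ and absorbing the lower-order term, one concludes $V(r) \leq b r^{\adimn}$ for $r > 1$. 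The finiteness of the Gaussian surface area enters to control the "something" term (an integral of $\langle x, N\rangle^2 \gamma$, which is bounded by a constant times the total Gaussian area) and to give a starting value at $r=1$.

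Alternatively, and perhaps more cleanly, I would run the Colding--Minicozzi-style argument directly: finiteness of $\int_\Sigma e^{-|x|^2/2}\,\d\mathcal{H}^\sdimn$ combined with the almost-monotonicity of the Gaussian-weighted area ratios $\theta(r) := r^{-\sdimn}\int_{\Sigma \cap B(0,r)} \d\mathcal{H}^\sdimn \cdot e^{-r^2/(2\cdot?)}$ forces the Euclidean ratios to stay bounded; the shift by $\scon$ only perturbs the monotonicity formula by terms that are integrable and hence harmless. I expect the main obstacle to be bookkeeping the $\scon$-dependent error terms so that they do not destroy the polynomial bound — in the pure self-shrinker case $\scon = 0$ this is classical (Huisken), but with $\scon \neq 0$ one must check that the cross term $\scon\langle x, N\rangle$ in $\mathrm{div}_\Sigma(x^T)$, which has indefinite sign, can be absorbed. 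The Cauchy--Schwarz / Young's inequality trick $|\scon\langle x, N\rangle| \leq \tfrac12\langle x, N\rangle^2 + \tfrac12\scon^2$ should suffice, at the cost of a worse constant $b$, and the harmless additive constant $\tfrac12\scon^2$ times $V(r)$ only improves the lower bound in the differential inequality for $V$. A secondary (minor) obstacle is handling the possible singular set when $\sdimn \geq 7$, but since it has codimension at least $7 \geq 1$ it has zero $\sdimn$-dimensional Hausdorff measure and the divergence theorem still applies on the smooth part with no boundary contribution from the singularities.
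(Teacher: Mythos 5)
Your approach is genuinely different from the paper's. The paper does not use the Euler--Lagrange equation $H = \langle x, N\rangle + \scon$ at all: it argues by \emph{comparison with a competitor}. For $r$ large, it replaces $\Omega\cap B(0,r)^{c}$ by the complement $C=B(0,\rho)^{c}$ of a Euclidean ball chosen so that $\gamma_{\adimn}(C)=\gamma_{\adimn}(\Omega\cap B(0,r)^{c})$ (so $\rho\geq r$), forming the symmetric competitor $\Omega'=C\cup(\Omega\cap B(0,r))$ of the same Gaussian measure. Since $\Omega'$ and $\Omega$ agree inside $B(0,r)$ and the new boundary outside $B(0,r)$ lives on at most two spheres, minimality gives the strong tail estimate $\int_{\Sigma\cap B(0,r)^{c}}\gamma_{\sdimn}\leq b\, r^{\adimn}e^{-r^{2}/2}$. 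A layer-cake argument with the perturbed weight $e^{-t^{2+\epsilon}/2}$ and integration by parts then converts this exponential Gaussian decay into the polynomial Euclidean volume bound. That approach is short and self-contained.

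As sketched, your monotonicity-style argument has two concrete gaps. First, \textbf{the differential inequality points the wrong way.} Applying the divergence theorem to $x^{T}$ on $\Sigma\cap B(0,r)$ and bounding $\langle x^{T},\nu\rangle=|x^{T}|\leq r$ gives
$$\sdimn\,V(r)-\int_{\Sigma\cap B(0,r)}\bigl(\langle x,N\rangle^{2}+\scon\langle x,N\rangle\bigr)\,\d\mathcal{H}^{\sdimn}\;\leq\; r\,V'(r),$$
which is a \emph{lower} bound on $V'(r)$. Integrating a lower bound on $V'$ can only give a lower bound $V(r)\gtrsim r^{\sdimn}$, not the upper bound $V(r)\leq b\,r^{\adimn}$ you want. (Your inequality $\langle x,N\rangle^{2}+\scon\langle x,N\rangle\geq -\scon^{2}/4$ also bounds the integrand from below, which again only sharpens the lower bound on $V'$.) Second, \textbf{the error term is not controlled by the Gaussian area.} The integral $\int_{\Sigma\cap B(0,r)}\langle x,N\rangle^{2}\,\d\mathcal{H}^{\sdimn}$ appearing above is an unweighted Euclidean integral; the finite Gaussian quantity $\int_{\Sigma}\langle x,N\rangle^{2}\gamma_{\sdimn}$ only yields $\int_{\Sigma\cap B(0,r)}\langle x,N\rangle^{2}\,\d\mathcal{H}^{\sdimn}\leq (2\pi)^{\sdimn/2}e^{r^{2}/2}\int_{\Sigma}\langle x,N\rangle^{2}\gamma_{\sdimn}$, an exponential loss that destroys the estimate. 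Note also that finiteness of the Gaussian surface area by itself cannot imply polynomial volume growth: a disjoint union of spheres $\partial B(p_{k},\epsilon_{k})$ with $|p_{k}|=k$ and $\epsilon_{k}$ growing polynomially has finite Gaussian area but arbitrarily fast Euclidean growth. So any successful proof must use more than finiteness of the Gaussian integral. Ding--Xin-type arguments for self-shrinkers and $\lambda$-hypersurfaces do exist, but they require a more careful choice of weighted vector field (and properness of the immersion) than the identity you wrote; the paper's competitor argument avoids all of that by using minimality directly.
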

\begin{proof}
Denote $B(0,r)\colonequals\{x\in\R^{\adimn}\colon \vnorm{x}<r\}$.  Let $\Omega_{\geq r}\colonequals\Omega\cap B(0,r)^{c}$.  Let $C\subset\R^{\adimn}$ be the complement of a ball centered at the origin such that $\gamma_{\adimn}(\Omega_{\geq r})=\gamma_{\adimn}(C)$.  Define $\Omega'\colonequals C\cup(\Omega\setminus\Omega_{\geq r})$.  Then $\gamma_{\adimn}(\Omega)=\gamma_{\adimn}(\Omega')$, and since $\Omega$ minimizes Problem \ref{prob1}, we have
\begin{flalign*}
&\int_{B(0,r)^{c}\cap \Sigma}\gamma_{\sdimn}(x)+\int_{B(0,r)\cap\Sigma}\gamma_{\sdimn}(x)\,\d x
=\int_{\Sigma}\gamma_{\sdimn}(x)\,\d x\\
&\qquad \leq \int_{\Sigma'}\gamma_{\sdimn}(x)\,\d x
=\int_{B(0,r)^{c}\cap \Sigma'}\gamma_{\sdimn}(x)\,\d x+\int_{B(0,r)\cap\Sigma'}\gamma_{\sdimn}(x)\,\d x\\
&\qquad=\int_{B(0,r)^{c}\cap\Sigma}\gamma_{\sdimn}(x)\,\d x+\int_{B(0,r)\cap\Sigma'}\gamma_{\sdimn}(x)\,\d x.
\end{flalign*}
The last line used the definition of $\Omega'$.  Canceling like terms, we then get
$$\int_{B(0,r)^{c}\cap\Sigma}\gamma_{\sdimn}(x)
\leq \int_{B(0,r)^{c}\cap \Sigma'}\gamma_{\sdimn}(x)\,\d x
\leq 2\int_{\partial B(0,r)^{c}}\gamma_{\sdimn}(x)\,\d x.$$
The last inequality used the definition of $\Omega'$.  So, there exists $b=b_{\adimn}$ such that, $\forall$ $r>1$
\begin{equation}\label{cdb}
\int_{B(0,r)^{c}\cap \Sigma}\gamma_{\sdimn}(x)\leq b r^{\adimn}e^{-r^{2}/2}.
\end{equation}
It then follows that $\partial\Omega$ has polynomial volume growth.  Denote $\Sigma\cap B(0,r)^{c}\cap (B(0,N))\equalscolon\Sigma_{r\leq N}$, so that
\begin{equation}\label{cdq}
\begin{aligned}
\gamma_{\sdimn}(\Sigma_{r\leq N})
&=\int_{r}^{N}e^{-t^{2}/2}\frac{\d}{\d t}\mathrm{Vol}(\Sigma_{t\leq N+1})\frac{\d t}{(2\pi)^{\frac{\sdimn}{2}}}
\geq\int_{r}^{N}e^{-t^{2+\epsilon}/2}\frac{\d}{\d t}\mathrm{Vol}(\Sigma_{t\leq N+1})\frac{\d t}{(2\pi)^{\frac{\sdimn}{2}}}\\
&=e^{-r^{2+\epsilon}/2}\mathrm{Vol}(\Sigma_{r\leq N+1})\frac{1}{(2\pi)^{\frac{\sdimn}{2}}}-e^{-N^{2+\epsilon}/2}\mathrm{Vol}(\Sigma_{N\leq N+1})\frac{1}{(2\pi)^{\frac{\sdimn}{2}}}\\
&\qquad\qquad\qquad\qquad+\int_{r}^{N}te^{-t^{2+\epsilon}/2}\mathrm{Vol}(\Sigma_{t\leq N+1})\frac{\d t}{(2\pi)^{\frac{\sdimn}{2}}}.
\end{aligned}
\end{equation}
The last term is nonnegative and finite.  Letting $N\to\infty$, the second term goes to zero by \eqref{cdb}.  Then letting $\epsilon\to0^{+}$, \eqref{cdq} becomes
$$\mathrm{Vol}(\Sigma\cap B(0,r)^{c})\frac{1}{(2\pi)^{\frac{\sdimn}{2}}}\stackrel{\eqref{cdb}}{\leq} e^{r^{2}/2} b r^{\adimn}e^{-r^{2}/2}= br^{\adimn}.$$
\end{proof}

\section{Classification of Stable Self-Shrinkers}

\begin{theorem}\label{thm3}
Let $\Omega\subset\R^{\adimn}$ minimize Problem \ref{prob1}.  Assume that $\Sigma\colonequals\redb\Omega$ satisfies
\begin{equation}\label{one2}
H(x)=\langle x,N(x)\rangle,\qquad\forall\,x\in\Sigma.
\end{equation}
Then there exists an integer $0\leq k\leq\sdimn$ such that, after rotating $\Omega$,
$$\Sigma= \sqrt{k}S^{k}\times\R^{\sdimn-k}.$$
\end{theorem}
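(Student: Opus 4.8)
The plan is to adapt the Huisken / Colding--Minicozzi classification of self-shrinkers \cite{colding12a}, organizing the argument by the sign of the mean curvature $H$. Since $H(x)=\langle x,N(x)\rangle$ we are in the case $\scon=0$ of Lemma \ref{varlem}, so by Lemma \ref{lemma30} the function $H$ (and every entry of $A$) is an honest eigenfunction of $L$ with eigenvalue $2$: $LH=2H$, $LA=2A$, equivalently $\mathcal{L}H=(1-\vnormt{A}^{2})H$. Recall from Lemmas \ref{lemma51} and \ref{polyvollem} that $\Sigma$ is smooth away from a set of Hausdorff dimension at most $\sdimn-7$, has polynomial volume growth, and is symmetric. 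I would split into three cases: (i) $H$ changes sign on $\Sigma$; (ii) $H\equiv0$ on $\Sigma$; (iii) $H\geq0$ with $H\not\equiv0$ (the case $H\leq0$ reduces to this by passing to $\Omega^{c}$, which is again a symmetric minimizer with self-shrinker boundary and outer normal $-N$).

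Cases (ii) and (iii) are the ``Huisken'' cases and need no second variation input. In case (ii), $\langle x,N\rangle\equiv0$ makes $\Sigma$ dilation invariant, i.e.\ a cone with vertex at the origin; a smooth such cone is a hyperplane through the origin, which is $\sqrt{0}\,S^{0}\times\R^{\sdimn}$ (the $k=0$ conclusion), while a singular such cone would be a Gaussian minimal cone, which is excluded as a minimizer by the separate Simons-type argument (adapting \cite{zhu16}, \cite{simons68}). In case (iii), $\mathcal{L}H=(1-\vnormt{A}^{2})H$ is a linear elliptic equation in $H\geq0$, $H\not\equiv0$, so the strong maximum principle forces $H>0$ everywhere; then Huisken's rigidity theorem together with its noncompact extension in \cite{colding12a} (for complete embedded self-shrinkers of polynomial volume growth with $H>0$) forces $\Sigma=\sqrt{\sdimn}\,S^{\sdimn}$ or $\Sigma=\sqrt{k}\,S^{k}\times\R^{\sdimn-k}$, $1\leq k\leq\sdimn-1$, after a rotation.

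The heart of the matter is case (i), where I would derive a contradiction with minimality. Using the polynomial volume growth of $\Sigma$, the self-adjoint operator $L$ on $L^{2}(\Sigma,\gamma_{\sdimn})$ has a largest eigenvalue $\delta_{1}$, attained by an eigenfunction $\phi_{1}$ which (after replacing $\phi_{1}$ by $\abs{\phi_{1}}$ and invoking the strong maximum principle / Harnack inequality) may be taken strictly positive; the top eigenspace is one-dimensional, and since $L$ commutes with $x\mapsto -x$ and $\Sigma=-\Sigma$, uniqueness forces $\phi_{1}(-x)=\phi_{1}(x)$. As $H$ is a \emph{sign-changing} eigenfunction of $L$ with eigenvalue $2$, it cannot span the one-dimensional top eigenspace, so $\delta_{1}>2$. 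Now apply Lemma \ref{orthlem} with $f=\phi_{1}$: as an $L^{2}(\gamma_{\sdimn})$-eigenfunction of $L$, $\phi_{1}$ lies in the admissible class (finiteness of $\int_{\Sigma}(\phi_{1}^{2}+\vnormt{\nabla\phi_{1}}^{2}+\abs{\phi_{1}\mathcal{L}\phi_{1}})\gamma_{\sdimn}$ follows from $\phi_{1}\in\mathrm{Dom}(L)$) and $L\phi_{1}=\delta_{1}\phi_{1}$. Crucially, $\phi_{1}$ need \emph{not} have vanishing Gaussian average on $\Sigma$: the factor $x_{\adimn}^{2}-1$, which integrates to zero against $\gamma_{1}$, automatically makes $g=(x_{\adimn}^{2}-1)\phi_{1}$ a symmetric, Gaussian-volume-preserving perturbation of $\Sigma\times\R=\redb(\Omega\times\R)$. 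Lemma \ref{orthlem} then yields $\frac{\d^{2}}{\d s^{2}}\big|_{s=0}\int_{(\Sigma\times\R)^{(s)}}\gamma_{\sdimn}\,\d x\leq-(\delta_{1}-2)\int\abs{g}^{2}\gamma_{\sdimn}\,\d x<0$, contradicting the minimality of $\Omega\times\R$ (Lemma \ref{varlem2}). Hence $H$ cannot change sign, and the three cases together give $\Sigma=\sqrt{k}\,S^{k}\times\R^{\sdimn-k}$ for some $0\leq k\leq\sdimn$, after a rotation.

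I expect the main obstacle to be the spectral theory used in case (i): establishing that $L$ on the (possibly noncompact, and only mildly regular) self-shrinker $\Sigma$ attains a largest eigenvalue whose eigenfunction is positive and even is exactly where the polynomial-volume-growth estimate and the machinery imported from \cite{colding12a} must be invoked carefully; if the spectrum were not discrete one would instead take a symmetric compactly supported test function whose Rayleigh quotient for $L$ exceeds $2$, which the sign-changing $H$ still guarantees exists. A secondary obstacle is that case (iii) relies on the noncompact form of Huisken's classification, which must dispense with the a priori bound on $\vnormt{A}$ available in the compact setting; I would cite \cite{colding12a} for that rather than reprove it.
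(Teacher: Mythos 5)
Your three-way decomposition---$H$ changes sign, $H\equiv 0$ (cone case), and $H\geq 0$ with $H\not\equiv 0$ (Huisken case)---matches the paper's proof, and your treatment of the latter two cases is essentially the paper's (Huisken's rigidity plus its noncompact extension, and the Simons-type instability of symmetric minimal cones). The gap is in case (i). You apply Lemma \ref{orthlem} with $f=\phi_{1}$ to produce a volume-preserving, symmetric perturbation of $\Sigma\times\R$ in one dimension higher, and you then invoke ``the minimality of $\Omega\times\R$'' to get a contradiction. But Theorem \ref{thm3} only hypothesizes that $\Omega\subset\R^{\adimn}$ minimizes Problem \ref{prob1}; it does not hypothesize that $\Omega\times\R\subset\R^{\sdimn+2}$ minimizes, and the implication ``$\Omega$ minimizes'' $\Rightarrow$ ``$\Omega\times\R$ minimizes'' is not available (the correct implication goes the other way: if $\Omega\times\R$ minimizes then so does $\Omega$, by considering product competitors). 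Raising the dimension opens up new competitors, so instability of $\Omega\times\R$ does not by itself contradict the hypothesis that $\Omega$ minimizes.

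The paper avoids this entirely by working directly on $\Sigma=\redb\Omega$ with Lemma \ref{varlem2}. Rather than multiplying the top eigenfunction by $x_{\sdimn+2}^{2}-1$ to enforce zero Gaussian average, it takes $f\colonequals H+c\,g$, where $g>0$ satisfies $Lg=\pcon g$ with $\pcon>2$, and uses the fact that $H$ changes sign (so $\int_{\Sigma}H\gamma_{\sdimn}$ can be offset against $c\int_{\Sigma}g\gamma_{\sdimn}$) to choose $c$ with $\int_{\Sigma}f\gamma_{\sdimn}=0$. Then orthogonality (or near-orthogonality in the noncompact case, cf.\ \cite[Eq.\ 6.41]{zhu16}) of the eigenfunctions $H$ and $g$ corresponding to distinct eigenvalues $2$ and $\pcon$ gives $\int_{\Sigma}fLf\,\gamma_{\sdimn}=2\int_{\Sigma}H^{2}\gamma_{\sdimn}+c^{2}\pcon\int_{\Sigma}g^{2}\gamma_{\sdimn}>0$, contradicting the second-variation inequality for the minimizer $\Omega$ itself. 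Your Lemma-\ref{orthlem} route would be fine at the point in the Main Theorem where $\Omega\times\R$ is assumed to minimize, but to prove Theorem \ref{thm3} as stated you should replace it with this direct variation on $\Sigma$.
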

\begin{proof}

\textbf{Case 1.}  Assume $\partial\Omega$ is compact.

We repeat the proof of \cite[Proposition 1.5]{heilman17}.  Let $H$ be the mean curvature of $\Sigma$.  If $H\geq0$ on $\Sigma$, then Huisken's classification \cite{huisken90,huisken93} \cite[Theorem 0.17]{colding12a} of compact surfaces satisfying \eqref{one2} implies that $\Sigma$ is a round sphere ($\Sigma=\sqrt{n}S^{n}$).  So, we may assume that $H$ changes sign on $\Sigma$.  From \eqref{three9} with $\scon=0$, $LH=2H$.  Since $H$ changes sign, $2$ is not the largest eigenvalue of $L$, by spectral theory \cite[Lemma 6.5]{zhu16} (e.g. using that $(L-\vnormt{A}^{2}-2)^{-1}$ is a compact operator).  That is, there exists a $C^{2}$ function $g\colon\Sigma\to\R$ and there exists $\pcon>2$ such that $Lg=\pcon g$.  Moreover, $g>0$ on $\Sigma$.  Since $g>0$ and $\Sigma=-\Sigma$, it follows by \eqref{three4.5} that $g(x)+g(-x)$ is an eigenfunction of $L$ with eigenvalue $\pcon$.  That is, we may assume that $g(x)=g(-x)$ for all $x\in\Sigma$.

From the second variation formula, Lemma \ref{varlem2},
$$\frac{\d^{2}}{\d s^{2}}|_{s=0}\int_{\Sigma^{(s)}}\gamma_{\sdimn}(x)\,\d x
=-\int_{\Sigma}f(x)Lf(x)\gamma_{\sdimn}(x)\,\d x.$$

So, to complete the proof, it suffices by Lemma \ref{varlem2} to find a $C^{2}$ function $f$ such that
\begin{itemize}
\item $f(x)=f(-x)$ for all $x\in\Sigma$.  ($f$ preserves symmetry.)
\item $\int_{\Sigma}f(x)\gamma_{\sdimn}(x)\,\d x=0$.  ($f$ preserves Gaussian volume.)
\item $\int_{\Sigma}f(x)Lf(x)\gamma_{\sdimn}(x)\,\d x>0$.  ($f$ decreases Gaussian surface area.)
\end{itemize}
We choose $g$ as above so that $Lg=\pcon g$, $\pcon>2$ and so that $\int_{\Sigma}(H(x)+g(x))\gamma_{\sdimn}(x)\,\d x=0$.  (Since $H$ changes sign and $g\geq0$, $g$ can satisfy the last equality by multiplying it by an appropriate constant.)  We then define $f\colonequals H+g$.  Then $f$ satisfies the first two properties.  So, it remains to show that $f$ satisfies the last property.  Note that, since $H$ and $g$ have different eigenvalues, they are orthogonal, i.e. $\int_{\Sigma}H(x)g(x)\gamma_{\sdimn}(x)\,\d x=0$.  Therefore,
\begin{flalign*}
\int_{\Sigma}f(x)Lf(x)\gamma_{\sdimn}(x)\,\d x
&=\int_{\Sigma}(H(x)+g(x))(2H(x)+\pcon g(x))\gamma_{\sdimn}(x)\,\d x\\
&=2\int_{\Sigma}(H(x))^{2}\gamma_{\sdimn}(x)\,\d x
+\pcon\int_{\Sigma}(g(x))^{2}\gamma_{\sdimn}(x)\,\d x
>0.
\end{flalign*}
(Since $H(x)=\langle x,N(x)\rangle$ for all $x\in\Sigma$, and $\Sigma$ is compact, both $\int_{\Sigma}(H(x))^{2}\gamma_{\sdimn}(x)\,\d x$ and $\int_{\Sigma}H(x)\gamma_{\sdimn}(x)\,\d x$ are finite a priori.)  We conclude that $H$ cannot change sign, i.e. we must have $\Sigma=\sqrt{n}S^{n}$ in this case.

\textbf{Case 2.}  Assume $\partial\Omega$ is not compact, and $H$ is not identically zero.

This case follows the argument of Case 1 \cite[Lemmas 9.44 and 9.45]{colding12a}, \cite[Proposition 6.11]{zhu16}.  If $H$ does not change sign, Huisken's classification \cite[Theorem 0.4]{zhu16} now implies that there exists $0\leq k\leq\sdimn$ such that, after rotating $\Omega$, $\partial\Omega=\sqrt{k}S^{k}\times\R^{\sdimn-k}$.  If $H$ changes sign, then instead of asserting the existence of $g$, one approximates $g$ by a sequence of Dirichlet eigenfunctions on the intersection of $\Sigma$ with large compact balls.  We then repeat the proof of Case 1.  The eigenfunctions $H$ and $g$ are not necessarily orthogonal in the non-compact case, but their inner product $\int_{\Sigma}H(x)g(x)\gamma_{\sdimn}(x)\,\d x$ can be made to satisfy $\abs{\int_{\Sigma}H(x)g(x)\gamma_{\sdimn}(x)\,\d x}\leq\epsilon(\int_{\Sigma}H^{2}\gamma_{\sdimn}(x)\,\d x)^{1/2}(\int_{\Sigma}g^{2}\gamma_{\sdimn}(x)\,\d x)^{1/2}$ for arbitrary $\epsilon>0$ \cite[Equation 6.41]{zhu16}, so the above argument works, with this change.  Also, the integration by parts is justified in Lemmas \ref{lemma39.79} and \ref{lemma39.1}.

\textbf{Case 3.}  Assume $H$ is identically zero.

In this final remaining case, we have from \eqref{one2} that $H(x)=\langle x,N(x)\rangle=0$ for all $x\in\Sigma$.  That is, $\Sigma$ is a (minimal) cone.  We eliminate this case in Section \ref{seccone}.

\end{proof}

\section{Classification of Stable Convex Sets}

\begin{theorem}\label{cor7}
Let $\Omega\subset\R^{\adimn}$.  Assume that $\Omega\times\R$ minimizes Problem \ref{prob1}.   (From Lemma \ref{varlem}, there exists $\scon\in\R$ such that $H(x)=\langle x,N(x)\rangle+\scon$ $\forall$ $x\in\Sigma$.)  Assume that $H\geq0$ on $\Sigma\colonequals\redb\Omega$ and $\scon\geq0$.  Then, after rotating $\Omega$, 
$$\partial\Omega= \lambda S^{0}\times\R^{n}.$$
\end{theorem}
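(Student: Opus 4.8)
The plan is to show that under the hypotheses $H \geq 0$ and $\scon \geq 0$, the surface $\Sigma' \colonequals \redb(\Omega\times\R) = \Sigma\times\R$ has a symmetric, volume-preserving perturbation that strictly decreases Gaussian surface area unless $\Sigma$ is a round cylinder of the stated form, and then to identify exactly which cylinder it is. The main ingredient is Lemma \ref{orthlem}: it suffices to produce a function $f$ on $\Sigma$ with $f(x)=f(-x)$, with $\int_\Sigma f\,\gamma_{\sdimn}=0$, with the appropriate finiteness of $f^2+\vnorm{\nabla f}^2+\abs{f\mathcal{L}f}$, and satisfying $Lf = \pcon f$ (or the variational inequality $\int_\Sigma fLf\,\gamma_{\sdimn} \geq \pcon\int_\Sigma f^2\,\gamma_{\sdimn}$) for some $\pcon > 2$; then $(x_{\adimn}^2-1)f$ violates stability of $\Omega\times\R$, a contradiction. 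So the natural candidate is $f = H$ itself.

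First I would recall from Lemma \ref{lemma30}, equation \eqref{three9}, that $LH = 2H + \scon\vnormt{A}^2$. Since $H\geq 0$ and $\scon\geq 0$, this gives $LH \geq 2H$ pointwise, hence (after verifying the integrability hypotheses using Lemma \ref{polyvollem} and the parabolicity/integration-by-parts facts cited in Lemmas \ref{lemma39.79}, \ref{lemma39.1}) $\int_\Sigma H\,L H\,\gamma_{\sdimn} \geq 2\int_\Sigma H^2\,\gamma_{\sdimn}$, with equality only if $\scon\vnormt{A}^2 H \equiv 0$. Next I would arrange the volume constraint: if $H$ is not already Gaussian-mean-zero, replace it by $H - c$ for a suitable constant — but one must check that the constant function survives the operator favorably, i.e. compute $L1 = 1 + \vnormt{A}^2$, so $\int (H-c)L(H-c)$ picks up cross terms; a cleaner route, following the compact self-shrinker argument of Theorem \ref{thm3} Case 1, is to note that if $H$ changes sign we can already run the eigenfunction argument, and if $H\geq 0$ does not change sign we instead invoke the Huisken-type classification from \cite{heilman17} directly (as flagged in the introduction's discussion of the subcase $H\geq 0$, $\scon > 0$), which identifies $\Sigma$ as a round cylinder $rS^k\times\R^{\sdimn-k}$ without any second-variation computation. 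I would split on whether $\scon > 0$ or $\scon = 0$: when $\scon = 0$ the hypothesis \eqref{three0} reduces to \eqref{one2} and Theorem \ref{thm3} applies verbatim, giving $\Sigma = \sqrt{k}S^k\times\R^{\sdimn-k}$; when $\scon > 0$ the Huisken-type classification of \cite{heilman17} forces the cylinder structure.

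Once $\Sigma = rS^k\times\R^{\sdimn-k}$ is known, the remaining task is to pin down $k=0$ and $r = \scon$. The equation $H = \langle x,N\rangle + \scon$ evaluated on the cylinder $rS^k\times\R^{\sdimn-k}$ gives, for a point $x$ with spherical part of norm $r$, $H = k/r$ (the sphere $rS^k$ has mean curvature $k/r$ with the inward-type sign convention matching \eqref{three4}) and $\langle x, N\rangle = r$, so $k/r = r + \scon$, i.e. $\scon = k/r - r$. For this to be $\geq 0$ we need $k \geq r^2$; combined with the local-minimality radius bounds (a cylinder $rS^k\times\R^{\sdimn-k}$ with $k\geq 1$ can only be stable when $r\in[\sqrt{k},\sqrt{k+2}]$, from \cite{lamanna17}-type considerations and the arguments behind Theorem \ref{thm1}), the only consistent possibility with $\scon \geq 0$ and $\Omega\times\R$ a minimizer is $k=0$, forcing $\Sigma = \scon S^0 \times \R^{\sdimn}$ and $r = \scon$ (here $S^0$ is two points at distance $\scon$ from the origin, i.e. $\Omega$ is a slab). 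I expect the main obstacle to be the $\scon > 0$, $k\geq 1$ case: ruling it out requires combining the second-variation instability (using $H$, whose eigenvalue-type bound $LH \geq 2H$ is only an \emph{inequality} and becomes an equality precisely when $\vnormt{A}^2 \equiv 0$ on the support of $H$, which fails on a genuine sphere factor) with the Huisken-classification input, and care is needed to confirm that the strict inequality $\pcon > 2$ — not merely $\pcon \geq 2$ — is available, which is exactly where the $\scon\vnormt{A}^2 > 0$ term on the sphere factor does the work.
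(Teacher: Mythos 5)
Your central misreading is of Lemma \ref{orthlem}: it does \emph{not} require $\int_{\Sigma}f\,\gamma_{\sdimn}=0$ of the input $f$.  The factor $x_{\adimn}^{2}-1$ in $g=(x_{\adimn}^{2}-1)f$ is there precisely so that $\int_{\Sigma\times\R}g\,\gamma_{\sdimn}=0$ holds automatically by Fubini (since $\int_{\R}(x_{\adimn}^{2}-1)\gamma_{1}\,\d x_{\adimn}=0$), whatever the Gaussian average of $f$ on $\Sigma$ is.  So your worry about replacing $H$ by $H-c$, and the resulting pivot away from the direct second-variation computation, is unnecessary.

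With that corrected, the paper's proof is exactly the argument you started and abandoned.  Take $f=H$ in Lemma \ref{orthlem}.  By \eqref{three9},
$\int_{\Sigma}H\,LH\,\gamma_{\sdimn}\,\d x=2\int_{\Sigma}H^{2}\gamma_{\sdimn}\,\d x+\scon\int_{\Sigma}H\vnormt{A}^{2}\gamma_{\sdimn}\,\d x$.
With $H\geq0$ and $\scon\geq0$ the last term is nonnegative, so $\int H\,LH\geq 2\int H^{2}$, and equality forces $H\vnormt{A}^{2}\equiv0$, hence $H\equiv0$ (since $\vnormt{A}=0$ already implies $H=0$).  If the inequality is strict, Lemma \ref{orthlem} yields a symmetric, Gaussian-volume-preserving perturbation of $\Omega\times\R$ that strictly decreases Gaussian surface area, contradicting minimality; the needed integrability is furnished by Lemmas \ref{polyvollem}, \ref{lemma39.1} and \ref{lemma39.79}, as you noted.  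So $H\equiv0$, and then \eqref{three9} reads $0=\scon\vnormt{A}^{2}$, forcing $A\equiv0$ when $\scon>0$; from $0=H=\langle x,N\rangle+\scon$ one reads off $\partial\Omega=\scon\,S^{0}\times\R^{\sdimn}$.  No Huisken input is used, and there is no separate case analysis on the sign change of $H$ (the hypothesis $H\geq0$ already forbids it).

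Your alternative via Huisken classification, followed by pinning down $k$ and $r$, is mentioned in the introduction as a viable route when $\scon>0$, so it is not wrong in spirit.  But as you have sketched it the endgame is loose: the radius bounds you cite do not directly rule out $rS^{k}\times\R^{\sdimn-k}$ for $k\geq1$, and what actually kills those cases is precisely that on such a cylinder $H=k/r>0$ and $\vnormt{A}^{2}=k/r^{2}>0$ are constant, so the very inequality $\int H\,LH>2\int H^{2}$ you already wrote down gives instability via Lemma \ref{orthlem}.  Thus the Huisken detour is circular: you would end up needing the second-variation step anyway, and the paper's direct argument does it in one stroke without any classification theorem.

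Finally, note the paper's proof implicitly uses $\scon>0$ when concluding $\vnormt{A}\equiv0$ from $\scon\vnormt{A}^{2}\equiv0$; the degenerate case $\scon=0$ is really covered by Theorem \ref{thm3}, as you correctly observed.
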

\begin{proof}
By \eqref{three9},
\begin{equation}\label{six0}
\int_{\partial\Omega}H LH\gamma_{\sdimn}(x)\,\d x=\int_{\partial\Omega}[2H^{2}+\scon H\vnorm{A}^{2}]\gamma_{\sdimn}(x)\,\d x.
\end{equation}
(The quantity $\int_{\partial\Omega}H LH\gamma_{\sdimn}(x)\,\d x$ is finite a priori by Lemma \ref{lemma39.1}; note also $\partial\Omega$ has polynomial volume growth by Lemma \ref{polyvollem}.)   So,
\begin{equation}\label{six1}
\int_{\partial\Omega}H LH\gamma_{\sdimn}(x)\,\d x\geq\int_{\partial\Omega}2H^{2}\gamma_{\sdimn}(x)\,\d x,
\end{equation}
with equality only when $H\vnorm{A}=0$ identically, i.e. when $H=0$ identically (since $\vnorm{A}=0$ implies $H=0$).  If $H=0$ on all of $\Sigma$, then by \eqref{three9},
$$0=LH=2H+\lambda\vnorm{A}^{2}=\lambda\vnorm{A}^{2}.$$
Since $\lambda>0$ we conclude that $\vnorm{A}=0$ on all of $\Sigma$.  In summary, the inequality in \eqref{six1} is strict, unless $\vnorm{A}=0$ on $\Sigma$.

In the former case of strict inequality in \eqref{six1}, let $f\colonequals H$ in Lemma \ref{orthlem}.  Note that $H(x)=H(-x)$ for all $x\in\Sigma$.  Then Lemma \ref{orthlem} and \eqref{six1} imply that the variation of $\Omega\times\R$ correspond to $g$ satisfies
$$\frac{\d^{2}}{\d s^{2}}\Big|_{s=0}\int_{(\Sigma\times\R)^{(s)}}\gamma_{\adimn}(x)dx<0.$$
This inequality violates the minimality of $\Omega$, achieving a contradiction, demonstrating that we must have $\vnorm{A}=0$ on $\Sigma$.  In the latter case, we must then have $\Sigma= rS^{0}\times\R^{\sdimn}$ for some $r>0$.  Since $0=H=\langle x,N\rangle+\lambda$, we conclude that $r=\lambda$.
\end{proof}

Note that Theorem \ref{cor7} proved a stronger result than we need to prove Theorem \ref{thm1}. Theorem \ref{cor7} shows that the only $\Omega$ with $\Omega\times\R$ stable for Gaussian surface area with $H\geq0$ and $\lambda>0$ (or $H\leq0$ and $\lambda<0$) is a symmetric slab or its complement.

\section{Non-Mean Convex Sets are Unstable}

In order to prove instability of noncompact sets, we will need to integrate by parts in the second variation formula in Lemma \ref{varlem2}.  The noncompactness, together with the low-dimensional singularities on the surface (see Lemma \ref{lemma51}) mean that integrating by parts is nontrivial, hence the results below.

\begin{lemma}[\embolden{Integration by Parts}, {\cite[Corollary 3.10]{colding12a}, \cite[Lemma 5.4]{zhu16}, \cite[Lemma 4.4]{heilman17}}]\label{lemma39.7}
Let $\Sigma\subset\R^{\adimn}$ be an $\sdimn$-dimensional hypersurface.  Let $f,g\colon\Sigma\to\R$.  Assume that $f$ is a $C^{2}$ function and $g$ is a $C^{2}$ function with compact support.  Then
$$\int_{\Sigma}f\mathcal{L}g\gamma_{\sdimn}(x)dx=\int_{\Sigma}g\mathcal{L}f\gamma_{\sdimn}(x)dx=-\int_{\Sigma}\langle\nabla f,\nabla g\rangle\gamma_{\sdimn}(x)dx.$$
\end{lemma}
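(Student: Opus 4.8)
The plan is to recognize $\mathcal{L}$ as the weighted (drift) Laplacian on $\Sigma$ attached to the Gaussian weight, and then to integrate by parts by applying the divergence theorem on $\Sigma$ to a suitable compactly supported tangential vector field. The whole statement will follow from one pointwise computation run twice, with the roles of $f$ and $g$ interchanged.

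First I would record the pointwise identity
\begin{equation*}
\mathrm{div}_{\Sigma}\bigl(g\,\gamma_{\sdimn}\,\nabla f\bigr)=\gamma_{\sdimn}\,g\,\mathcal{L}f+\gamma_{\sdimn}\,\langle\nabla f,\nabla g\rangle ,
\end{equation*}
where $\mathrm{div}_{\Sigma}$ is the intrinsic divergence on $\Sigma$. To get this, note that $\gamma_{\sdimn}$ extends to a smooth function on $\R^{\adimn}$ with ambient gradient $-\gamma_{\sdimn}\,x$, so its tangential gradient along $\Sigma$ is $\nabla\gamma_{\sdimn}=-\gamma_{\sdimn}\,x^{T}$; pairing against the tangential field $\nabla f$ and using $\langle x^{N},\nabla f\rangle=0$ gives $\langle\nabla\gamma_{\sdimn},\nabla f\rangle=-\gamma_{\sdimn}\langle x,\nabla f\rangle$. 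Combining this with the scalar–field product rule $\mathrm{div}_{\Sigma}(hW)=h\,\mathrm{div}_{\Sigma}W+\langle\nabla h,W\rangle$ applied to $h=g\gamma_{\sdimn}$, $W=\nabla f$, and with $\mathrm{div}_{\Sigma}\nabla f=\Delta f$, yields the displayed identity after grouping the $\Delta f-\langle x,\nabla f\rangle=\mathcal{L}f$ terms via \eqref{three4.3}.

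Next I would integrate over $\Sigma$. Since $g$ has compact support, $g\,\gamma_{\sdimn}\,\nabla f$ is a compactly supported tangential vector field on $\Sigma$, and because $\partial\Sigma=\emptyset$ the divergence theorem on $\Sigma$ gives $\int_{\Sigma}\mathrm{div}_{\Sigma}(g\,\gamma_{\sdimn}\,\nabla f)\,\d x=0$ (equivalently, the only boundary-type term, the pairing of a tangential field with the normal mean curvature vector, vanishes). Hence $\int_{\Sigma}g\,\mathcal{L}f\,\gamma_{\sdimn}(x)\,\d x=-\int_{\Sigma}\langle\nabla f,\nabla g\rangle\gamma_{\sdimn}(x)\,\d x$. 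Running the identical computation with $f$ and $g$ swapped — now using that $\nabla g$, hence $f\,\gamma_{\sdimn}\,\nabla g$, is compactly supported (this is where we need $g$, not $f$, to have compact support) — gives $\int_{\Sigma}f\,\mathcal{L}g\,\gamma_{\sdimn}(x)\,\d x=-\int_{\Sigma}\langle\nabla f,\nabla g\rangle\gamma_{\sdimn}(x)\,\d x$, and comparing the two expressions proves all three equalities.

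The computation itself is routine, so for a genuine smooth hypersurface without boundary there is essentially no obstacle beyond keeping track of the tangential projections. The only delicate point — and the one to flag — is the vanishing of the divergence term: if instead one wanted this statement for the reduced boundary of a minimizer, whose singular set has Hausdorff dimension at most $\sdimn-7$ by Lemma \ref{lemma51}, one would need to insert a cutoff equal to $1$ away from the singular set and show the extra gradient terms it produces vanish in the limit using the codimension bound. As stated here $\Sigma$ is a hypersurface and $f,g$ are $C^{2}$ with $g$ compactly supported, so it suffices to invoke the smooth divergence theorem, exactly as in \cite[Corollary 3.10]{colding12a}, \cite[Lemma 5.4]{zhu16}, \cite[Lemma 4.4]{heilman17}.
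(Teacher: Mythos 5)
Your proof is correct. Note, however, that the paper itself does not prove Lemma \ref{lemma39.7}; it is stated as a known result with citations to \cite[Corollary 3.10]{colding12a}, \cite[Lemma 5.4]{zhu16}, and \cite[Lemma 4.4]{heilman17}. The argument you give — computing the pointwise identity $\mathrm{div}_{\Sigma}(g\,\gamma_{\sdimn}\,\nabla f)=\gamma_{\sdimn}\,g\,\mathcal{L}f+\gamma_{\sdimn}\langle\nabla f,\nabla g\rangle$ via $\nabla\gamma_{\sdimn}=-\gamma_{\sdimn}\,x^{T}$ and $\langle x^{T},\nabla f\rangle=\langle x,\nabla f\rangle$, applying the divergence theorem on the boundaryless hypersurface to the compactly supported tangential field, and then repeating with $f$ and $g$ interchanged — is precisely the standard proof appearing in those references. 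You correctly observe that the compact support of $g$ is what makes both $g\,\gamma_{\sdimn}\,\nabla f$ and $f\,\gamma_{\sdimn}\,\nabla g$ compactly supported, so $f$ is allowed to be unbounded. Your closing remark about inserting a cutoff near the singular set when passing from a smooth hypersurface to $\redb\Omega$ for a minimizer is exactly what the paper handles separately in Corollary \ref{lemma39.79}, so that distinction is well-placed.
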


\begin{cor}[\embolden{Integration by Parts, Version 2} {\cite[Lemma 5.4]{zhu16}, \cite[Corollary 5.10]{heilman17}}]\label{lemma39.79}
Let $\Omega\subset\R^{\adimn}$.  Let $f,g\colon\redA\to\R$ be $C^{2}$ functions.  Suppose the Hausdorff dimension of $\partial\Omega\setminus\redA$ is at most $\sdimn-7$.  Assume that
$$\int_{\Sigma}\vnormt{\nabla f}\vnormt{\nabla g}\gamma_{\sdimn}(x)dx<\infty,\qquad
\int_{\Sigma}\abs{f\mathcal{L}g}\gamma_{\sdimn}(x)dx<\infty,\qquad
\int_{\Sigma}\vnormt{f\nabla g}^{6/5}\gamma_{\sdimn}(x)dx<\infty.
$$
Then
$$\int_{\Sigma}f\mathcal{L}g\gamma_{\sdimn}(x)dx
=-\int_{\Sigma}\langle\nabla f,\nabla g\rangle\gamma_{\sdimn}(x)dx.
$$
\end{cor}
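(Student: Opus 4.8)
The plan is to reduce the statement to the compactly supported version, Lemma \ref{lemma39.7}, via a cutoff-and-limit argument. First I would fix a smooth cutoff function $\varphi\colon[0,\infty)\to[0,1]$ with $\varphi\equiv1$ on $[0,1]$, $\varphi\equiv0$ on $[2,\infty)$, and $\absf{\varphi'}\leq2$, and set $\varphi_{R}(x)\colonequals\varphi(\vnormt{x}/R)$ for $R>0$. However, there is the complication that $\Sigma=\redA$ may fail to be a smooth manifold on the singular set $\partial\Omega\setminus\redA$, whose Hausdorff dimension is at most $\sdimn-7$ by hypothesis (this is why the dimension restriction appears). So in addition to the large-radius cutoff I would introduce a second cutoff $\psi_{\epsilon}$ that vanishes in a neighborhood of the singular set and equals $1$ outside an $\epsilon$-neighborhood of it; since the singular set has codimension at least $7$ in $\Sigma$, one can choose $\psi_{\epsilon}$ with $\int_{\Sigma}\vnormt{\nabla\psi_{\epsilon}}^{p}\gamma_{\sdimn}(x)\,\d x\to0$ for suitable $p$ (a standard capacity estimate; cf. the references cited before the statement). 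Then $g\varphi_{R}\psi_{\epsilon}$ is a $C^{2}$ function with compact support contained in the smooth part of $\Sigma$, so Lemma \ref{lemma39.7} applies to the pair $(f,\,g\varphi_{R}\psi_{\epsilon})$.

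Next I would expand $\mathcal{L}(g\varphi_{R}\psi_{\epsilon})$ using the product rule for $\mathcal{L}$ (which follows from \eqref{three4.3}, writing $\mathcal{L}(uv)=u\mathcal{L}v+v\mathcal{L}u+2\langle\nabla u,\nabla v\rangle$ after subtracting off the duplicated $\Delta$ and first-order terms — or more simply $\mathcal{L}(uv)=u\Delta v+v\Delta u+2\langle\nabla u,\nabla v\rangle-\langle x,u\nabla v+v\nabla u\rangle$). Applying Lemma \ref{lemma39.7} to $(f,g\varphi_{R}\psi_{\epsilon})$ gives
$$\int_{\Sigma}f\mathcal{L}(g\varphi_{R}\psi_{\epsilon})\gamma_{\sdimn}(x)\,\d x=-\int_{\Sigma}\langle\nabla f,\nabla(g\varphi_{R}\psi_{\epsilon})\rangle\gamma_{\sdimn}(x)\,\d x.$$
On the left-hand side, the leading term is $\int_{\Sigma}f(\mathcal{L}g)\varphi_{R}\psi_{\epsilon}\gamma_{\sdimn}(x)\,\d x$, which converges to $\int_{\Sigma}f\mathcal{L}g\,\gamma_{\sdimn}(x)\,\d x$ by dominated convergence using the hypothesis $\int_{\Sigma}\absf{f\mathcal{L}g}\gamma_{\sdimn}(x)\,\d x<\infty$. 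The remaining terms involve at least one derivative of $\varphi_{R}$ or $\psi_{\epsilon}$: terms like $\int_{\Sigma}f g\,\mathcal{L}(\varphi_{R}\psi_{\epsilon})\gamma_{\sdimn}$ and $\int_{\Sigma}f\langle\nabla g,\nabla(\varphi_{R}\psi_{\epsilon})\rangle\gamma_{\sdimn}$. These I would control by Hölder's inequality: the factor $\vnormt{\nabla(\varphi_{R}\psi_{\epsilon})}$ is supported in the annulus $\{R\leq\vnormt{x}\leq2R\}$ union the shrinking neighborhood of the singular set, and the hypotheses $\int_{\Sigma}\vnormt{f\nabla g}^{6/5}\gamma_{\sdimn}<\infty$ and $\int_{\Sigma}\vnormt{\nabla f}\vnormt{\nabla g}\gamma_{\sdimn}<\infty$ (together with the rapid Gaussian decay and the polynomial volume growth from Lemma \ref{polyvollem}) force the annular contributions to vanish as $R\to\infty$, while the capacity choice of $\psi_{\epsilon}$ kills the singular-set contributions as $\epsilon\to0^{+}$. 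The exponent $6/5$ is precisely the dual exponent that pairs with the $L^{6}$ bound on $\nabla\varphi_{R}$ coming from Sobolev embedding on the surface (this is where the Michael–Simon Sobolev inequality, valid because $\Sigma$ has polynomial volume growth, enters). On the right-hand side the same expansion and the same estimates show $\int_{\Sigma}\langle\nabla f,\nabla(g\varphi_{R}\psi_{\epsilon})\rangle\gamma_{\sdimn}\to\int_{\Sigma}\langle\nabla f,\nabla g\rangle\gamma_{\sdimn}$. Taking $\epsilon\to0^{+}$ first and then $R\to\infty$ yields the claimed identity.

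The main obstacle I anticipate is the bookkeeping of the cutoff error terms — specifically, verifying that the cross terms $\int_{\Sigma}fg\,\mathcal{L}\varphi_{R}\,\gamma_{\sdimn}$ and $\int_{\Sigma}fg\langle x,\nabla\varphi_{R}\rangle\gamma_{\sdimn}$, which each contain an undifferentiated $fg$ rather than $f\nabla g$, are still controlled. Here one must exploit the factor $\vnormt{x}e^{-\vnormt{x}^{2}/2}$ on the annulus $\vnormt{x}\sim R$, which decays faster than any polynomial, against the at-most-polynomial growth of $\int_{\Sigma\cap B(0,2R)}\absf{fg}\gamma_{\sdimn}$ that follows from Lemma \ref{polyvollem} and the finite-integral hypotheses (via Cauchy–Schwarz against the hypotheses controlling $f^{2}$, $\vnormt{\nabla g}$, etc.). Making this fully rigorous requires keeping careful track of which Lebesgue exponents are available, but it is routine given the stated hypotheses and the references \cite{colding12a,zhu16,heilman17} already invoked; since the argument is essentially identical to those, I would state the reduction and cite them for the detailed estimates.
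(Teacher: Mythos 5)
The paper does not prove this corollary inline; it cites \cite[Lemma 5.4]{zhu16} and \cite[Corollary 5.10]{heilman17}. Your overall architecture — reduce to Lemma \ref{lemma39.7} via a large-radius cutoff $\varphi_{R}$ and a singular-set cutoff $\psi_{\epsilon}$, then pass to the limit — is the right one and matches those references. But there is a concrete gap: you apply Lemma \ref{lemma39.7} to the pair $(f,\,g\varphi_{R}\psi_{\epsilon})$, which puts the cutoff on $g$ and, after the product rule, produces three error terms: $\int_{\Sigma}fg\,\mathcal{L}(\varphi_{R}\psi_{\epsilon})\,\gamma_{\sdimn}$, $\int_{\Sigma}f\langle\nabla g,\nabla(\varphi_{R}\psi_{\epsilon})\rangle\,\gamma_{\sdimn}$, and $\int_{\Sigma}g\langle\nabla f,\nabla(\varphi_{R}\psi_{\epsilon})\rangle\,\gamma_{\sdimn}$. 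Only the second of these is controlled by the stated hypotheses (via H\"older with exponents $6/5$ and $6$ against $\int\vnormt{f\nabla g}^{6/5}\gamma_{\sdimn}<\infty$). The first contains the undifferentiated product $fg$ and the third contains $g\nabla f$, and none of the three hypotheses gives any bound on $\int\absf{fg}\,\gamma_{\sdimn}$, $\int\absf{fg}\vnormt{x}\,\gamma_{\sdimn}$, or $\int\vnormt{g\nabla f}^{6/5}\gamma_{\sdimn}$. Your last paragraph acknowledges the $fg$ term and proposes to absorb it using the Gaussian decay and Lemma \ref{polyvollem} ``via Cauchy--Schwarz against the hypotheses controlling $f^{2}$, $\vnormt{\nabla g}$, etc.'' — but the hypotheses do not control $f^{2}$ or $\vnormt{\nabla g}$ individually, only certain products, so that step does not close.

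The standard and correct choice is to put the cutoff on $f$, not $g$: set $\eta\colonequals\varphi_{R}\psi_{\epsilon}$ and apply Lemma \ref{lemma39.7} to the pair in which $\eta f$ is the compactly supported function. Since Lemma \ref{lemma39.7} also gives $\int_{\Sigma}(\eta f)\mathcal{L}g\,\gamma_{\sdimn}=-\int_{\Sigma}\langle\nabla(\eta f),\nabla g\rangle\,\gamma_{\sdimn}$, expanding the right side yields
$$\int_{\Sigma}\eta f\,\mathcal{L}g\,\gamma_{\sdimn}(x)\,\d x
=-\int_{\Sigma}\eta\langle\nabla f,\nabla g\rangle\,\gamma_{\sdimn}(x)\,\d x
-\int_{\Sigma}f\langle\nabla\eta,\nabla g\rangle\,\gamma_{\sdimn}(x)\,\d x,$$
with exactly one error term and no second-order derivative of $\eta$. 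The left side and the first term on the right converge by dominated convergence using $\int\absf{f\mathcal{L}g}\gamma_{\sdimn}<\infty$ and $\int\vnormt{\nabla f}\vnormt{\nabla g}\gamma_{\sdimn}<\infty$ respectively, and the remaining error term is bounded by $(\int\vnormt{f\nabla g}^{6/5}\gamma_{\sdimn})^{5/6}(\int\vnormt{\nabla\eta}^{6}\gamma_{\sdimn})^{1/6}$. That the second factor can be made small is where the codimension-$7$ assumption enters: a set of Hausdorff dimension at most $\sdimn-7$ has zero $6$-capacity in the $\sdimn$-manifold $\Sigma$, so one can build $\psi_{\epsilon}$ with $\int\vnormt{\nabla\psi_{\epsilon}}^{6}\gamma_{\sdimn}\to0$, while $\int\vnormt{\nabla\varphi_{R}}^{6}\gamma_{\sdimn}\to0$ simply from $\absf{\nabla\varphi_{R}}\lesssim1/R$ together with Lemma \ref{polyvollem} and the Gaussian weight (the Michael--Simon Sobolev inequality is not needed for this). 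With this modification your argument is correct; without it, the uncontrolled $\int fg\,\mathcal{L}\eta\,\gamma_{\sdimn}$ and $\int g\langle\nabla f,\nabla\eta\rangle\,\gamma_{\sdimn}$ terms are a genuine gap.
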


\begin{cor}[{\cite[Lemma 6.2]{zhu16}, \cite[Corollary 5.5]{heilman17}}]\label{cor5}
Let $\Omega\subset\R^{\adimn}$ and let $\Sigma\colonequals\partial\Omega$.  Assume $\pcon(\Sigma)<\infty$.  Suppose $g\colon\Sigma\to\R$ is a $C^{2}$ function with $g>0$ and $Lg=\pcon g$.  Assume that the Hausdorff dimension of $\partial\Omega\setminus\redA$ is at most $\sdimn-7$.  Then for any $k\geq0$, $\vnormt{A}\vnormt{x}^{k}\in L_{2}(\Sigma,\gamma_{\sdimn})$
\end{cor}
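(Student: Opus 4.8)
The plan is to convert the eigenfunction equation $Lg=\pcon g$ into a \emph{reverse Poincar\'e inequality} bounding $\int_{\Sigma}\varphi^2\vnorm{A}^2\gamma_{\sdimn}(x)\,\d x$ by $\int_{\Sigma}(\varphi^2+\vnorm{\nabla\varphi}^2)\gamma_{\sdimn}(x)\,\d x$ for test functions $\varphi$, and then to feed in polynomially growing cutoffs, exploiting polynomial volume growth of $\Sigma$ to pass to the limit. First, since $L=\mathcal{L}+1+\vnorm{A}^2$ by \eqref{three4.3} and \eqref{three4.5}, the hypothesis $Lg=\pcon g$ rewrites as $\mathcal{L}g=(\pcon-1-\vnorm{A}^2)g$. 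Setting $u\colonequals\log g$ — a $C^2$ function wherever $\Sigma$ is smooth since $g>0$ is $C^2$ — and using the chain rule $\mathcal{L}u=\mathcal{L}g/g-\vnorm{\nabla u}^2$, I would arrive at the pointwise identity
$$\mathcal{L}u+\vnorm{\nabla u}^2+\vnorm{A}^2=\pcon-1\qquad\text{on }\redA.$$

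Next I would take a $C^2$ function $\varphi$ with compact support contained in $\redA$ (in particular $\varphi$ vanishes near the singular set $\partial\Omega\setminus\redA$), multiply the identity above by $\varphi^2$, integrate against $\gamma_{\sdimn}$, and integrate by parts via Lemma \ref{lemma39.7} (with $f=u$ and test function $\varphi^2$) to obtain
$$\int_{\Sigma}\varphi^2\vnorm{A}^2\gamma_{\sdimn}(x)\,\d x+\int_{\Sigma}\varphi^2\vnorm{\nabla u}^2\gamma_{\sdimn}(x)\,\d x-2\int_{\Sigma}\varphi\langle\nabla\varphi,\nabla u\rangle\gamma_{\sdimn}(x)\,\d x=(\pcon-1)\int_{\Sigma}\varphi^2\gamma_{\sdimn}(x)\,\d x.$$
Absorbing the cross term via $2\varphi\vnorm{\nabla\varphi}\vnorm{\nabla u}\leq\tfrac12\varphi^2\vnorm{\nabla u}^2+2\vnorm{\nabla\varphi}^2$ and dropping the remaining nonnegative $\vnorm{\nabla u}^2$ term then gives the reverse Poincar\'e inequality
$$\int_{\Sigma}\varphi^2\vnorm{A}^2\gamma_{\sdimn}(x)\,\d x\leq\abs{\pcon-1}\int_{\Sigma}\varphi^2\gamma_{\sdimn}(x)\,\d x+2\int_{\Sigma}\vnorm{\nabla\varphi}^2\gamma_{\sdimn}(x)\,\d x.$$

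To finish, I would fix $k\geq0$ and apply this inequality with $\varphi=(1+\vnorm{x}^2)^{k/2}\,\eta(\vnorm{x}/R)\,\chi_{\epsilon}$, where $\eta\colon[0,\infty)\to[0,1]$ is smooth with $\eta\equiv1$ on $[0,1]$, $\eta\equiv0$ on $[2,\infty)$ and $\absf{\eta'}\leq2$, and where $\chi_{\epsilon}$ equals $1$ off the $\epsilon$-neighborhood of $\partial\Omega\setminus\redA$, equals $0$ near it, and satisfies $\int_{\Sigma}\vnorm{\nabla\chi_{\epsilon}}^2\gamma_{\sdimn}(x)\,\d x\to0$ as $\epsilon\to0^{+}$; such $\chi_{\epsilon}$ exist since $\partial\Omega\setminus\redA$ has Hausdorff dimension at most $\sdimn-7$ and therefore carries no weighted $2$-capacity (the same device that underlies Corollary \ref{lemma39.79}). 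Using $\vnorm{\nabla(1+\vnorm{x}^2)^{k/2}}\leq k(1+\vnorm{x}^2)^{(k-1)/2}$ and that $\vnorm{\nabla\eta(\vnorm{x}/R)}\leq 2/R$ is supported on $\{R\leq\vnorm{x}\leq2R\}$, the right-hand side of the reverse Poincar\'e inequality is bounded by a constant depending on $k$ and $\pcon$ times
$$\int_{\Sigma}(1+\vnorm{x}^2)^{k}\gamma_{\sdimn}(x)\,\d x+\int_{\Sigma}(1+\vnorm{x}^2)^{k-1}\gamma_{\sdimn}(x)\,\d x+\frac{1}{R^2}\int_{\{R\leq\vnorm{x}\leq2R\}\cap\Sigma}(1+\vnorm{x}^2)^{k}\gamma_{\sdimn}(x)\,\d x+\int_{\Sigma}\vnorm{\nabla\chi_{\epsilon}}^2\gamma_{\sdimn}(x)\,\d x.$$
By the polynomial volume growth of $\Sigma$ — which in every application of this corollary in the paper is supplied by Lemma \ref{polyvollem} — the first two integrals are finite, the third tends to $0$ as $R\to\infty$, and the fourth tends to $0$ as $\epsilon\to0^{+}$, so $\int_{\Sigma}\varphi^2\vnorm{A}^2\gamma_{\sdimn}(x)\,\d x$ stays bounded along a sequence with $R\to\infty$ and $\epsilon=\epsilon_R\to0^{+}$. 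Since $\varphi^2\to(1+\vnorm{x}^2)^{k}$ pointwise on $\redA$, a set of full $\gamma_{\sdimn}$-measure in $\Sigma$, Fatou's lemma then gives $\int_{\Sigma}(1+\vnorm{x}^2)^{k}\vnorm{A}^2\gamma_{\sdimn}(x)\,\d x<\infty$, i.e. $\vnorm{A}\vnorm{x}^{k}\in L_2(\Sigma,\gamma_{\sdimn})$.

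The step I expect to be the main obstacle is the integration by parts: since $\Sigma$ is noncompact and carries a low-dimensional singular set, one cannot simply integrate by parts globally, but must first localize to a genuinely compactly supported $\varphi$ inside $\redA$ and only then exhaust $\Sigma$ — this is exactly where the hypothesis on the Hausdorff dimension of $\partial\Omega\setminus\redA$ and the capacity estimate for the $\chi_{\epsilon}$ enter, as in \cite[Corollary 3.10]{colding12a}, \cite[Lemma 5.4]{zhu16} and Corollary \ref{lemma39.79} above. Once that is in place, the rest is just the reverse Poincar\'e inequality combined with the a priori polynomial volume bound.
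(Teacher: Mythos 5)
Your proposal is correct and follows essentially the same route as the cited sources (Zhu's Lemma 6.2 and Heilman's Corollary 5.5): take $u=\log g$ for the positive eigenfunction $g$, derive the pointwise identity $\mathcal{L}u+\vnormt{\nabla u}^{2}+\vnormt{A}^{2}=\pcon-1$, multiply by a cutoff $\varphi^{2}$, integrate by parts via Lemma \ref{lemma39.7}, absorb the cross term by AM--GM, drop the nonnegative $\vnormt{\nabla u}^{2}$ term to get the reverse Poincar\'e inequality, and then exhaust with polynomially weighted cutoffs times a capacity-type cutoff $\chi_{\epsilon}$ that vanishes near the singular set. This is the standard Colding--Minicozzi device and is exactly what the citations carry out. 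One remark worth making explicit: as stated, the corollary does not hypothesize polynomial volume growth of $\Sigma$, yet your argument (and the cited proofs) needs $\int_{\Sigma}(1+\vnormt{x}^{2})^{k}\gamma_{\sdimn}\,\d x<\infty$ for all $k$, which does not follow from finite Gaussian surface area alone. You correctly flag that in every application in this paper $\Omega$ is a minimizer, so Lemma \ref{polyvollem} supplies the missing polynomial volume growth; it would be cleaner to treat this as an implicit standing hypothesis of the corollary, as Zhu does.
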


\begin{lemma}[{\cite[Theorem 9.36]{colding12a}, \cite[Lemma 5.6]{heilman17}}]\label{lemma39.1}
Let $\Sigma\subset\R^{\adimn}$ be a connected, orientable $C^{\infty}$ hypersurface with polynomial volume growth and with possibly nonempty boundary.  Assume $\exists$ $\scon\in\R$ such that $H(x)=\langle x,N(x)\rangle+\scon$ for all $x\in\Sigma$.  Let $\pcon\colonequals\pcon(\Sigma)$.   Assume $\pcon(\Sigma)<\infty$.  Then
$$\vnormt{\nabla H}\in L_{2}(\Sigma,\gamma_{\sdimn}),\qquad\vnormt{A}\abs{H}\in L_{2}(\Sigma,\gamma_{\sdimn}).$$
$$\int_{\Sigma}\abs{H\mathcal{L}H}\gamma_{\sdimn}(x)dx<\infty.$$
\end{lemma}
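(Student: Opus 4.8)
The plan is to reduce all three integrability statements to two elementary facts together with Corollary~\ref{cor5}. The first fact is the pointwise bound $\abs{H(x)}=\abs{\langle x,N(x)\rangle+\scon}\leq\vnormt{x}+\abs{\scon}$, immediate from the hypothesis $H=\langle x,N\rangle+\scon$ and $\vnormt{N}=1$. The second is that, since $\Sigma$ has polynomial volume growth, every polynomial in $\vnormt{x}$ lies in $L_{p}(\Sigma,\gamma_{\sdimn})$ for all $p\geq1$ and has vanishing Gaussian tail, $\int_{\{\vnormt{x}>R\}}\vnormt{x}^{j}\gamma_{\sdimn}(x)\,\d x\to0$ as $R\to\infty$; in particular $H\in L_{2}(\Sigma,\gamma_{\sdimn})$. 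Since $\pcon(\Sigma)<\infty$ furnishes a positive $C^{2}$ eigenfunction $g$ of $L$ with $Lg=\pcon g$ and $\Sigma$ is smooth, Corollary~\ref{cor5} gives $\vnormt{A}\,\vnormt{x}^{k}\in L_{2}(\Sigma,\gamma_{\sdimn})$ for every $k\geq0$. Combining this with the bound on $\abs{H}$ yields $\vnormt{A}\abs{H}\leq\vnormt{A}\vnormt{x}+\abs{\scon}\vnormt{A}\in L_{2}(\Sigma,\gamma_{\sdimn})$ — the second assertion — and puts each of $\vnormt{A}^{2}$, $\vnormt{A}^{2}\vnormt{x}^{2}$, $\vnormt{A}^{2}\abs{H}$, $\vnormt{A}^{2}H^{2}$ and $H^{2}$ into $L_{1}(\Sigma,\gamma_{\sdimn})$.

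To obtain $\int_{\Sigma}\abs{H\mathcal{L}H}\gamma_{\sdimn}(x)\,\d x<\infty$, I would first record from \eqref{three4.3}--\eqref{three4.5} that $\mathcal{L}H=LH-(1+\vnormt{A}^{2})H$, and then substitute \eqref{three9} to get the identity $\mathcal{L}H=H+(\scon-H)\vnormt{A}^{2}$. Hence $\abs{H\mathcal{L}H}\leq H^{2}+\abs{\scon}\abs{H}\vnormt{A}^{2}+H^{2}\vnormt{A}^{2}$, and each summand is in $L_{1}(\Sigma,\gamma_{\sdimn})$ by the previous paragraph.

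The remaining claim $\vnormt{\nabla H}\in L_{2}(\Sigma,\gamma_{\sdimn})$ is the crux, and for it I would run a Caccioppoli argument. Fix $R>1$ and a smooth cutoff $\phi_{R}\colon\Sigma\to[0,1]$, equal to $1$ on $\{\vnormt{x}\leq R\}$, supported in $\{\vnormt{x}\leq2R\}$, increasing in $R$, with $\vnormt{\nabla\phi_{R}}\leq C/R$ (and also vanishing near $\partial\Sigma$ if $\partial\Sigma\neq\emptyset$). Applying Lemma~\ref{lemma39.7} with $f=H$ and $g=\phi_{R}^{2}H$, a $C^{2}$ function of compact support, gives
$$\int_{\Sigma}\phi_{R}^{2}H\mathcal{L}H\,\gamma_{\sdimn}(x)\,\d x=-\int_{\Sigma}\phi_{R}^{2}\vnormt{\nabla H}^{2}\gamma_{\sdimn}(x)\,\d x-2\int_{\Sigma}\phi_{R}H\langle\nabla\phi_{R},\nabla H\rangle\gamma_{\sdimn}(x)\,\d x,$$
and, after bounding the cross term by $\tfrac12\phi_{R}^{2}\vnormt{\nabla H}^{2}+2H^{2}\vnormt{\nabla\phi_{R}}^{2}$ and absorbing the first piece on the left,
$$\tfrac12\int_{\Sigma}\phi_{R}^{2}\vnormt{\nabla H}^{2}\gamma_{\sdimn}(x)\,\d x\leq\int_{\Sigma}\abs{H\mathcal{L}H}\gamma_{\sdimn}(x)\,\d x+2\int_{\Sigma}H^{2}\vnormt{\nabla\phi_{R}}^{2}\gamma_{\sdimn}(x)\,\d x.$$
Every integral here is finite a priori (the left-hand side and the last term because $\phi_{R}$ has compact support and $H$ is smooth; the middle term by the previous paragraph), and the last term is at most $\tfrac{C^{2}}{R^{2}}\int_{\{\vnormt{x}>R\}}(\vnormt{x}+\abs{\scon})^{2}\gamma_{\sdimn}(x)\,\d x$, which tends to $0$ as $R\to\infty$ by polynomial volume growth. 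Hence $\int_{\Sigma}\phi_{R}^{2}\vnormt{\nabla H}^{2}\gamma_{\sdimn}$ is bounded uniformly in $R$, and letting $R\to\infty$ gives $\vnormt{\nabla H}\in L_{2}(\Sigma,\gamma_{\sdimn})$ by Fatou's lemma.

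The main obstacle is precisely this last step: one must ensure that all integrals containing $\phi_{R}$ are legitimately finite before rearranging them (which is the whole reason for inserting the cutoff), that the integration by parts of Lemma~\ref{lemma39.7} is valid in the presence of a possibly nonempty boundary of $\Sigma$ (handled by also cutting off near $\partial\Sigma$), and that the gradient error term genuinely vanishes in the limit. All of this rests on the polynomial volume growth hypothesis and on the curvature decay supplied by Corollary~\ref{cor5}; everything else is bookkeeping with the pointwise estimate $\abs{H}\leq\vnormt{x}+\abs{\scon}$ and the Gaussian integrability of polynomials.
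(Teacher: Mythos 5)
Your reduction to the pointwise bound $\abs{H}\leq\vnormt{x}+\abs{\scon}$ together with Corollary~\ref{cor5} is correct, and the derivations of $\vnormt{A}\abs{H}\in L_{2}$ and $\int_{\Sigma}\abs{H\mathcal{L}H}\gamma_{\sdimn}(x)\,\d x<\infty$ via the identity $\mathcal{L}H=H+(\scon-H)\vnormt{A}^{2}$ are fine. However, the Caccioppoli argument for $\vnormt{\nabla H}\in L_{2}$ is an unnecessary detour: differentiating $H=\langle x,N\rangle+\scon$ and using \eqref{three2} gives $\nabla_{e_{i}}H=-\sum_{j}a_{ij}\langle x,e_{j}\rangle$, hence the pointwise bound $\vnormt{\nabla H}\leq\vnormt{A}\vnormt{x}$, and $\vnormt{A}\vnormt{x}\in L_{2}(\Sigma,\gamma_{\sdimn})$ is exactly Corollary~\ref{cor5} with $k=1$. (The paper uses this very bound in the proof of Lemma~\ref{lemma39.2}.) Once you have invoked Corollary~\ref{cor5} you already possess everything needed, so passing through $\int\abs{H\mathcal{L}H}$ and a cutoff argument buys you nothing. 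You should also flag that the sentence ``$\pcon(\Sigma)<\infty$ furnishes a positive $C^{2}$ eigenfunction $g$ with $Lg=\pcon g$'' is itself a nontrivial existence statement (in the noncompact setting it is obtained by exhausting $\Sigma$ by compact pieces, taking Dirichlet eigenfunctions, and passing to the limit, as in \cite{colding12a,zhu16}), and that Corollary~\ref{cor5} is stated for $\Sigma=\partial\Omega$ with a small singular set rather than for the general hypersurface-with-boundary in the lemma's hypotheses; you would need to either assume you are in the setting where this has been established or supply that step. With those two points made explicit, your argument is correct, if longer than necessary.
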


\begin{theorem}\label{thm5}
Let $\Omega\subset\R^{\adimn}$.  Suppose $H$ changes sign on $\Sigma\colonequals\redb\Omega$ (that is, $\Sigma$ is not mean convex).  Then $\Omega\times\R$ does not minimize Problem \ref{prob1}.
\end{theorem}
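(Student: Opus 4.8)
\textbf{Proof proposal for Theorem \ref{thm5}.}

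The plan is to produce a Gaussian-volume-preserving, Gaussian-surface-area-decreasing variation of $\Sigma\times\R$, which by Lemma \ref{varlem2} contradicts minimality. The engine will be Lemma \ref{orthlem}: if I can find a symmetric function $f\colon\Sigma\to\R$ with the appropriate integrability, with $\int_\Sigma f\gamma_{\sdimn}=0$ (wait—$f$ itself need not have zero mean, it is $g(x)=(x_{\adimn}^2-1)f$ that automatically has zero mean on $\Sigma\times\R$), and with $\int_\Sigma fLf\,\gamma_{\sdimn}\geq\delta\int_\Sigma f^2\gamma_{\sdimn}$ for some $\delta>2$, then Lemma \ref{orthlem} gives a variation of $\Sigma\times\R$ with strictly negative second variation, contradicting that $\Omega\times\R$ minimizes Problem \ref{prob1}. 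So the whole problem reduces to exhibiting such an $f$ on $\Sigma$ with ``Rayleigh quotient'' strictly above $2$.

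First I would invoke Lemma \ref{varlem} to get $\scon\in\R$ with $H(x)=\langle x,N(x)\rangle+\scon$ on $\Sigma$, and note $H(x)=H(-x)$ (since $N(-x)=-N(x)$ on a symmetric set, so $\langle -x, N(-x)\rangle = \langle x,N(x)\rangle$). By Lemma \ref{lemma30}, $LH=2H+\scon\vnormt{A}^2$. The natural candidate is a positive top eigenfunction $g$ of $L$ (with eigenvalue $\delta$), combined with $H$. Following the Case 1/Case 2 structure of Theorem \ref{thm3}: if $\partial\Omega$ is compact I can use spectral theory directly to get a positive $g$ with $Lg=\delta g$; since $H$ changes sign it is orthogonal to the positive $g$, forcing $\delta>2$ (because if the top eigenvalue were $\leq 2$ then $H$, being an eigenfunction-like object, could not be sign-changing and orthogonal to the positive ground state — more precisely, $\int H L H = \int(2H^2+\scon H\vnormt A^2)$, and comparing with the variational characterization of $\delta$ forces $\delta\geq 2$ with the sign-change ruling out equality). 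Then set $f\colonequals H+tg$ for a real parameter $t$ chosen so that — actually here I do not need $\int_\Sigma f\gamma_{\sdimn}=0$, only that I can run Lemma \ref{orthlem}, so I just need $\int_\Sigma fLf\,\gamma_{\sdimn}>2\int_\Sigma f^2\gamma_{\sdimn}$. Expanding, $\int fLf = \int H\,LH + 2t\int gLH + t^2\delta\int g^2$. The cross term $\int gLH=\int g(2H+\scon\vnormt A^2)$; by self-adjointness $\int gLH=\int H Lg=\delta\int Hg$. In the compact case $\int Hg=0$, so $\int fLf = \int H LH + t^2\delta\int g^2 = 2\int H^2 + \scon\int H\vnormt A^2 + t^2\delta\int g^2$, while $2\int f^2 = 2\int H^2 + 2t^2\int g^2$. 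Hmm—the $\scon\int H\vnormt A^2$ term has no definite sign when $H$ changes sign, which is exactly why one cannot simply use $f=H$. Instead one should first note $H$ itself is an \emph{almost}-eigenfunction: project onto the span of genuine eigenfunctions and pick out a high-eigenvalue component. The cleaner route, matching Theorem \ref{thm3} Case 1, is: directly find $C^2$ symmetric $f$ with $\int_\Sigma f\gamma_{\sdimn}=0$ and $\int_\Sigma fLf\,\gamma_{\sdimn}>0$; then in Lemma \ref{orthlem} I can take $\delta$ slightly above $2$ provided $\int fLf > 2\int f^2$. To get this, use that $2$ is not the top eigenvalue: there is a positive $g$ with $Lg=\delta g$, $\delta>2$; scale $g$ so $\int(H+g)\gamma_{\sdimn}=0$ (possible since $H$ changes sign, $g>0$), set $f=H+g$; then in the compact case orthogonality gives $\int fLf = 2\int H^2+\scon\int H\vnormt A^2+\delta\int g^2$. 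I still must absorb the indefinite $\scon\int H\vnormt A^2$; the fix is to replace $H$ by $\max(H,0)$ as the paper signals in the introduction — $\max(H,0)$ multiplied by $(x_{\adimn}^2-1)$ is the intended input, and the second variation of $\max(H,0)$ avoids the indefinite term because $LH=2H+\scon\vnormt A^2\geq 2H$ only where $H\geq 0$ \emph{if} $\scon\geq 0$; but $\scon$ can be negative, so instead one uses that on $\{H>0\}$ the perturbation decreases area and on $\{H\le 0\}$ it is zero. This is the delicate point.

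The main obstacle, then, is handling the indefinite term $\scon\int_\Sigma H\vnormt A^2\gamma_{\sdimn}$ and the noncompactness simultaneously. I would resolve the sign issue by using $\phi\colonequals\max(H,0)$ (or a smoothed version) in place of $H$: one has $L\phi \geq 2\phi$ weakly when tested against $\phi$ — precisely, $\int_\Sigma \phi L\phi\,\gamma_{\sdimn} = \int_{\{H>0\}} H(2H+\scon\vnormt A^2)\gamma_{\sdimn} + (\text{boundary/gradient defect from the kink})$, and the kink defect has a favorable sign because $\max(\cdot,0)$ is convex — together with the observation that where $H>0$ changes sign we are on a positive-measure set, so combining $\phi$ with the positive eigenfunction $g$ (eigenvalue $\delta>2$, existence from spectral theory in the compact case, or from Dirichlet eigenfunctions on exhausting balls in the noncompact case exactly as in Theorem \ref{thm3} Case 2) and using near-orthogonality $|\int \phi g\,\gamma_{\sdimn}|\leq \epsilon\|\phi\|\|g\|$, one still gets $\int fLf > 2\int f^2$ for $f=\phi+(\text{scaled }g)$ with zero mean. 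All integrations by parts are licensed by Lemmas \ref{lemma39.7}, \ref{lemma39.79}, Corollaries \ref{lemma39.79}, \ref{cor5} and Lemma \ref{lemma39.1}, using the polynomial volume growth from Lemma \ref{polyvollem}. Feeding this $f$ into Lemma \ref{orthlem} with $\delta$ chosen in $(2,\delta)$ yields a symmetric, volume-preserving variation of $\Sigma\times\R$ with strictly negative second variation, contradicting the minimality of $\Omega\times\R$ and proving the theorem. I expect the compactness dichotomy and the kink term to be the only genuinely technical pieces; everything else is bookkeeping against the lemmas already in hand.
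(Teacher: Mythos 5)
You correctly identify the engine (Lemma \ref{orthlem}) and you even land on the right test function, $h\colonequals\max(H,0)$, but you miss the one observation that makes the argument close cleanly: \emph{one may assume $\scon>0$ without loss of generality.} Replacing $\Omega$ by $\Omega^{c}$ flips the sign of $N$, hence of $H$ and of $\scon=H-\langle x,N\rangle$, while preserving both the hypothesis (``$H$ changes sign'') and the conclusion (minimality of $\Omega\times\R$ versus of $\Omega^{c}\times\R$, via Remark \ref{crk}); and $\scon=0$ is already Theorem \ref{thm3}. Once $\scon>0$, the ``indefinite term'' you worry about simply isn't indefinite: with $h\geq 0$,
\[
\int_{\Sigma}h\,Lh\,\gamma_{\sdimn}=\int_{\Sigma}\bigl[2h^{2}+\scon h\vnormt{A}^{2}\bigr]\gamma_{\sdimn}
\geq 2\int_{\Sigma}h^{2}\gamma_{\sdimn},
\]
with strict inequality since $\vnormt{A}$ cannot vanish identically on $\{H>0\}$; feeding $f=h$ into Lemma \ref{orthlem} finishes. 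The kink in $h$ costs nothing because $h$ vanishes on $\partial\{H>0\}$, so integration by parts (justified by Lemmas \ref{polyvollem}, \ref{lemma39.1}, \ref{lemma39.79}) produces no boundary term.

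By contrast, your attempted workaround for $\scon<0$ — superposing $h$ with a positive eigenfunction $g$ of $L$ with eigenvalue $\pcon>2$ — has a genuine gap: you never establish that such a $g$ exists. The Theorem \ref{thm3} argument you cite derives $\pcon>2$ from $H$ being an \emph{exact} eigenfunction of $L$ with eigenvalue $2$ that changes sign; here $LH=2H+\scon\vnormt{A}^{2}$ is only an almost-eigenfunction relation, and your ``comparing with the variational characterization forces $\pcon\geq 2$'' step relies on $\scon\int H\vnormt{A}^{2}\gamma_{\sdimn}\geq 0$, which is exactly what is not known when $\scon<0$. Moreover, even granting $g$, the whole compact/noncompact dichotomy, the Dirichlet-eigenfunction approximation, and the near-orthogonality estimate are unnecessary machinery once the sign of $\scon$ is normalized. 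So the proposal is close in spirit but misses the key reduction and substitutes an argument that does not hold up; this is a real gap rather than a stylistic difference.
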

\begin{proof}
Assume for the sake of contradiction that $\Omega\times\R$ minimizes Problem \ref{prob1}.  From Lemma \ref{varlem}, there exists $\scon\in\R$ such that
\begin{equation}\label{hdeq}
H(x)=\langle x,N(x)\rangle+\scon,\qquad\forall\,x\in\Sigma.
\end{equation}
The case $\scon=0$ was already treated in Theorem \ref{thm3}.  We may therefore assume that $\scon\neq0$.  Replacing $\Omega$ with $\Omega^{c}$ if necessary (i.e. changing the direction of $N$, which changes the sign of $H$ and $N$ and therefore of $\scon$), we may assume that $\scon>0$.

Since $H$ is not mean convex, $H$ changes sign, i.e. there exists a set of positive measure on $\Sigma$ where $H>0$.  Let $h\colonequals\max(H,0)$.  By \eqref{three9},
\begin{equation}\label{six0b}
\int_{\Sigma}h Lh\gamma_{\sdimn}(x)\,\d x=\int_{\Sigma}[2h^{2}+\scon h\vnorm{A}^{2}]\gamma_{\sdimn}(x)\,\d x.
\end{equation}
(Note that $\Sigma$ has polynomial volume growth by Lemma \ref{polyvollem}.  Then, by Lemma \ref{lemma39.1} and Corollary \ref{lemma39.79}, $H$ and $h$ are well-defined, the integrals in \eqref{six0b} are finite, and we can integrate by parts with $H$ or $h$.)  Since $\scon>0$,
\begin{equation}\label{six1b}
\int_{\partial\Omega}h Lh\gamma_{\sdimn}(x)\,\d x>\int_{\partial\Omega}2h^{2}\gamma_{\sdimn}(x)\,\d x.
\end{equation}
Note that equality cannot occur in \eqref{six1b} since equality would only occur when $\vnorm{A}=0$ on the set where $H>)$, but this cannot happen since $\vnorm{A}=0$ implies $H=0$).

Note that $h$ satisfies $h(x)=h(-x)$ for all $x\in\Sigma$ and $h\neq0$.    Then Lemma \ref{orthlem} and \eqref{six1b} imply that the variation of $\Omega\times\R$ correspond to $h$ satisfies
$$\frac{\d^{2}}{\d s^{2}}\Big|_{s=0}\int_{(\Sigma\times\R)^{(s)}}\gamma_{\sdimn}(x)dx<0.$$
This inequality violates the minimality of $\Omega\times\R$, achieving a contradiction, and completing the proof.
\end{proof}

\section{Curvature Bounds}

We will eventually reduce the mean-convex case of Theorem \ref{thm1} to the convex case.  Doing so requires using a (smoothed version of) the largest eigenvalue of the second fundamental form $A$ (see \eqref{smaxdef}) in the second variation formula from Lemma \ref{varlem2}.  In order to use \eqref{smaxdef} in Lemma \ref{varlem2}, we need to integrate by parts, which then requires an a priori curvature bound to hold.  This curvature bound (see Lemma \ref{lemma39.2}) was known to hold for mean-convex self-shrinkers, i.e. when $H=\langle x,N\rangle>0$ \cite{colding12a}, and a few small adjustments to their argument allow the bound to apply in the case $H=\langle x,N\rangle+\lambda$ with $\lambda<0$ and $H\geq0$.

For any hypersurface $\Sigma\subset\R^{\adimn}$, we define
\begin{equation}\label{seven0}
\pcon=\pcon(\Sigma)
\colonequals\sup_{f\in C_{0}^{\infty}(\Sigma)}\frac{\int_{\Sigma}fLf\gamma_{\sdimn}(x)dx}{\int_{\Sigma}f^{2}\gamma_{\sdimn}(x)dx}.
\end{equation}

\begin{lemma}[{\cite[Lemma 5.1]{heilman17}}]\label{lemma95.1}
If $\Omega\subset\R^{\adimn}$ minimizes Problem \ref{prob1}, then $\pcon(\redA)<\infty$.
\end{lemma}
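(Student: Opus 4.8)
The plan is to convert the statement into a control of the second fundamental form and to derive that control from the stability of the minimizer. Write $L=\mathcal{L}+1+\vnormt{A}^{2}$ as in \eqref{three4.5}; integrating by parts for $f\in C_{0}^{\infty}(\Sigma)$ (Lemma \ref{lemma39.7}, valid since $f$ is compactly supported in the smooth part $\redA$) gives
$$\int_{\Sigma}fLf\,\gamma_{\sdimn}(x)\,\d x=-\int_{\Sigma}\vnormt{\nabla f}^{2}\gamma_{\sdimn}(x)\,\d x+\int_{\Sigma}\big(1+\vnormt{A}^{2}\big)f^{2}\gamma_{\sdimn}(x)\,\d x.$$
Thus $\pcon(\Sigma)<\infty$ is exactly a form bound $\int_{\Sigma}\vnormt{A}^{2}f^{2}\gamma_{\sdimn}(x)\,\d x\leq\int_{\Sigma}\vnormt{\nabla f}^{2}\gamma_{\sdimn}(x)\,\d x+C\int_{\Sigma}f^{2}\gamma_{\sdimn}(x)\,\d x$ for all $f\in C_{0}^{\infty}(\Sigma)$, and this would follow from a pointwise bound $\vnormt{A}(x)\lesssim 1+\mathrm{dist}(x,\partial\Omega\setminus\redA)^{-1}$: on compact subsets of $\redA$ the curvature $\vnormt{A}$ is bounded merely because $\redA$ is smooth, while near $\partial\Omega\setminus\redA$ the resulting Hardy-type potential is form-bounded by $-\mathcal{L}$, the relevant Hardy constant on a tube about a set of Hausdorff dimension at most $\sdimn-7$ (Lemma \ref{lemma51}) being small enough precisely because the minimizing tangent cone is stable — that is, because the singular set has codimension at least $7$. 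So it remains to bound $\vnormt{A}$ on $\redA$ away from $\partial\Omega\setminus\redA$, where the only possible blow-up is as $\vnormt{x}\to\infty$.

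For that I would run the Schoen--Simon--Yau / Colding--Minicozzi curvature estimate for stable hypersurfaces, in its drift-diffusion form. Stability is Lemma \ref{varlem2}: for even, Gaussian-average-zero $f$ one has $\int_{\Sigma}fLf\,\gamma_{\sdimn}(x)\,\d x\leq0$, i.e. $\int_{\Sigma}\vnormt{A}^{2}f^{2}\gamma_{\sdimn}(x)\,\d x\leq\int_{\Sigma}\vnormt{\nabla f}^{2}\gamma_{\sdimn}(x)\,\d x$. Since $N(-x)=-N(x)$, $\vnormt{A}$ and every smooth function of $A$ are even, so for a radial cutoff $\phi$ and a suitable exponent $q$ the function $\vnormt{A}^{1+q}\phi$ is an even, compactly supported test function; applying Lemma \ref{varlem2} to $\vnormt{A}^{1+q}\phi-c\phi_{0}$, where $\phi_{0}\in C_{0}^{\infty}(\redA)$ is a fixed even bump and $c$ is chosen so the perturbation has zero Gaussian average, recovers the stability inequality up to corrections supported in the fixed set $\mathrm{supp}\,\phi_{0}$ (where $\vnormt{A}$ is bounded), which are controlled using $\int_{\Sigma}\gamma_{\sdimn}(x)\,\d x<\infty$ and are hence lower order. (Incidentally, restricting the supremum in \eqref{seven0} to even $f$, this cutoff correction already yields a finite bound with no curvature estimate at all, using only $\int_{\Sigma}\gamma_{\sdimn}(x)\,\d x<\infty$ and Lemma \ref{lemma39.7}; the curvature estimate is what handles arbitrary, in particular odd, test functions.) Feeding the Simons-type identity $LA=2A-\scon A^{2}$ from \eqref{three9p}, an improved Kato inequality for $\nabla A$, and the polynomial volume growth of $\Sigma$ from Lemma \ref{polyvollem} into the iteration yields $\int_{\Sigma}\vnormt{A}^{p}\gamma_{\sdimn}(x)\,\d x<\infty$ for $p$ in a range, and a Moser iteration upgrades this to the pointwise bound on $\vnormt{A}$ away from the singular set.

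The hard part is this curvature estimate. Relative to the compact or soliton setting, the drift term $\langle x,\nabla\cdot\rangle$ in $\mathcal{L}$ must be absorbed using the polynomial volume growth of Lemma \ref{polyvollem} in place of Euclidean volume bounds; the admissible exponents $q$ — hence the dimensions in which the iteration closes cleanly — are restricted by the Kato inequality; and every integration by parts must be justified across the singular set $\partial\Omega\setminus\redA$, which is exactly where its codimension being at least $7$, together with Corollary \ref{lemma39.79}, is needed. Granting the bound $\vnormt{A}(x)\lesssim 1+\mathrm{dist}(x,\partial\Omega\setminus\redA)^{-1}$, the identity of the first paragraph converts it into the form bound above (the singular-set contribution absorbed by the Hardy term), and therefore $\pcon(\Sigma)\leq 1+C<\infty$.
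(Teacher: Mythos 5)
Your parenthetical already contains the essential idea, and with one more observation it covers the whole lemma; the curvature-estimate machinery is both unnecessary and, at this point in the paper, unavailable. The supremum in \eqref{seven0} decouples by parity: writing $f=f_{e}+f_{o}$, the cross terms $\int_{\Sigma}\langle\nabla f_{e},\nabla f_{o}\rangle\gamma_{\sdimn}\,\d x$ and $\int_{\Sigma}\vnormt{A}^{2}f_{e}f_{o}\gamma_{\sdimn}\,\d x$ vanish because $\vnormt{A}$ and $\gamma_{\sdimn}$ are even on the symmetric $\Sigma$, so $\int_{\Sigma}fLf\gamma_{\sdimn}\,\d x=\int_{\Sigma}f_{e}Lf_{e}\gamma_{\sdimn}\,\d x+\int_{\Sigma}f_{o}Lf_{o}\gamma_{\sdimn}\,\d x$ and likewise for $\int_{\Sigma}f^{2}\gamma_{\sdimn}\,\d x$. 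You have already handled $f_{e}$ via the bump-function correction and Lemma \ref{varlem2}, with $\absf{c}$ controlled by $(\int_{\Sigma}\gamma_{\sdimn})^{1/2}(\int_{\Sigma}f^{2}\gamma_{\sdimn})^{1/2}$ and the fixed quantities $\int_{\Sigma}\phi_{0}L\phi_{0}\gamma_{\sdimn}$, $L\phi_{0}\in C_{0}^{\infty}(\redA)$; this gives the form bound $\int_{\Sigma}\vnormt{A}^{2}g^{2}\gamma_{\sdimn}\,\d x\leq\int_{\Sigma}\vnormt{\nabla g}^{2}\gamma_{\sdimn}\,\d x+C\int_{\Sigma}g^{2}\gamma_{\sdimn}\,\d x$ for even $g$. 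For $f_{o}$, the function $\absf{f_{o}}$ is even, compactly supported in $\redA$, and Lipschitz with $\vnormt{\nabla\absf{f_{o}}}=\vnormt{\nabla f_{o}}$ a.e.; approximating it in $W^{1,2}$ by even $C_{0}^{\infty}$ functions on a fixed compact subset of $\redA$ (where $\vnormt{A}$ is bounded) and applying the even form bound yields $\int_{\Sigma}\vnormt{A}^{2}f_{o}^{2}\gamma_{\sdimn}\,\d x\leq\int_{\Sigma}\vnormt{\nabla f_{o}}^{2}\gamma_{\sdimn}\,\d x+C\int_{\Sigma}f_{o}^{2}\gamma_{\sdimn}\,\d x$. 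Adding the two parities and rewriting via Lemma \ref{lemma39.7} gives $\pcon(\Sigma)\leq 1+C<\infty$, using only finiteness of the Gaussian perimeter.

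Your main route, a pointwise bound $\vnormt{A}\lesssim 1+\mathrm{dist}(\cdot,\partial\Omega\setminus\redA)^{-1}$ via Schoen--Simon--Yau iteration, does not work here. The tools you cite to run it are downstream of this lemma: Corollary \ref{cor5} and Lemma \ref{lemma39.2} both take $\pcon(\Sigma)<\infty$ as a hypothesis, and Corollary \ref{lemma39.79} needs integrability of $\vnormt{A}$-dependent quantities that you do not yet have; invoking them is circular. Independently, the test function $\vnormt{A}^{1+q}\phi$ is not a priori in $L^{2}(\gamma_{\sdimn})$, so it is not admissible before some curvature integrability has been established, and the Moser iteration you sketch closes only in low ambient dimension, whereas $\pcon(\Sigma)<\infty$ is needed in every dimension. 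Finally, a pointwise $L^{\infty}$ bound on $\vnormt{A}$ is a far stronger statement than the form bound the lemma actually requires; nothing forces the identification of finiteness of $\pcon$ with pointwise curvature control.
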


\begin{lemma}[\embolden{Simons-type inequality}, {\cite{simons68,colding12a,cheng15,zhu16}}]\label{lemma38}
Let $\Sigma$ be a $C^{\infty}$ orientable hypersurface.  Let $\scon\in\R$.  Suppose $H(x)=\langle x,N(x)\rangle+\scon$, $\forall\,x\in\Sigma$.  Then
\begin{equation}\label{three30}
\begin{aligned}
\vnormt{A}L\vnormt{A}
&=2\vnormt{A}^{2}-\scon\langle A^{2},A\rangle+\vnormt{\nabla A}^{2}-\vnormt{\nabla \vnormt{A}}^{2}\\
&\geq 2\vnormt{A}^{2}-\scon\langle A^{2},A\rangle.
\end{aligned}
\end{equation}
\end{lemma}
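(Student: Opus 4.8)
The plan is to derive the identity for $\langle A, LA\rangle$ from the Colding--Minicozzi-style Simons identity, then simplify using $\vnormt{A}L\vnormt{A} = \langle A, LA\rangle - \vnormt{\nabla A}^{2} + \vnormt{\nabla\vnormt{A}}^{2}$, and finally discard the sign-definite Kato term. First I would recall the basic commutation: differentiating the equation $H = \langle x, N\rangle + \scon$ twice along $\Sigma$ and using the Codazzi equations (which hold since $A$ is a symmetric $2$-tensor on a hypersurface of Euclidean space, so $\nabla A$ is totally symmetric) gives a Simons identity for the Euclidean Laplacian of $A$, of the schematic form $\Delta a_{ij} = \nabla_{i}\nabla_{j}H + H\,(A^{2})_{ij} - \vnormt{A}^{2} a_{ij}$ up to lower-order terms. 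Substituting $H = \langle x, N\rangle + \scon$ and the already-available formula $\nabla_{e_i}N = -\sum_j a_{ij}e_j$ from \eqref{three2} converts the Hessian-of-$H$ term into $a_{ij}$ plus a drift term $-\langle x, \nabla a_{ij}\rangle$ plus an $A^{2}$-term weighted by $\scon$; combined with the $\vnormt{A}^{2}$-potential this is exactly what is needed to produce $LA = \Delta A - \langle x,\nabla A\rangle + A + \vnormt{A}^{2}A$ acting on the left and $2A - \scon A^{2}$-type expressions on the right. This is, of course, precisely the content of Lemma \ref{lemma30}, equation \eqref{three9p}, $LA = 2A - \scon A^{2}$; so in fact one can \emph{bypass the full Simons derivation} and simply contract \eqref{three9p} with $A$.

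Concretely, I would take the inner product of \eqref{three9p} with $A$ to get $\langle A, LA\rangle = 2\vnormt{A}^{2} - \scon\langle A, A^{2}\rangle$, where $\langle A, A^{2}\rangle \colonequals \sum_{i,j}a_{ij}(A^{2})_{ij}$. Then I need the pointwise identity relating $\langle A, LA\rangle$ to $\vnormt{A}L\vnormt{A}$. Writing $\vnormt{A}^{2} = \langle A, A\rangle$, we have $\nabla_{e_k}\vnormt{A}^{2} = 2\langle \nabla_{e_k}A, A\rangle$, hence $\vnormt{A}\nabla_{e_k}\vnormt{A} = \langle \nabla_{e_k}A, A\rangle$, and differentiating again and summing over $k$,
\begin{equation*}
\vnormt{\nabla\vnormt{A}}^{2} + \vnormt{A}\Delta\vnormt{A} = \vnormt{\nabla A}^{2} + \langle A, \Delta A\rangle.
\end{equation*}
Similarly $\vnormt{A}\langle x, \nabla\vnormt{A}\rangle = \langle x, \langle A, \nabla A\rangle\rangle = \langle A, \langle x,\nabla A\rangle\rangle$, and the zeroth-order terms $A + \vnormt{A}^{2}A$ contribute $\vnormt{A}^{2} + \vnormt{A}^{4}$ to both $\langle A, LA\rangle$ and $\vnormt{A}L\vnormt{A}$. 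Subtracting, $\langle A, LA\rangle = \vnormt{A}L\vnormt{A} + \vnormt{\nabla A}^{2} - \vnormt{\nabla\vnormt{A}}^{2}$, i.e.
\begin{equation*}
\vnormt{A}L\vnormt{A} = 2\vnormt{A}^{2} - \scon\langle A^{2}, A\rangle + \vnormt{\nabla A}^{2} - \vnormt{\nabla\vnormt{A}}^{2},
\end{equation*}
which is the displayed identity \eqref{three30} (matching the notation $\langle A^{2}, A\rangle$ there). The inequality then follows from the Kato-type inequality $\vnormt{\nabla A}^{2} \geq \vnormt{\nabla\vnormt{A}}^{2}$, which is immediate from Cauchy--Schwarz: at each point and for each index $k$, $(\nabla_{e_k}\vnormt{A})^{2} = \vnormt{A}^{-2}\langle \nabla_{e_k}A, A\rangle^{2} \leq \vnormt{\nabla_{e_k}A}^{2}$ wherever $\vnormt{A}\neq 0$ (and where $\vnormt{A}=0$ the bound is interpreted in the usual limiting sense).

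The main obstacle, if one insists on the self-contained Simons-identity route rather than citing \eqref{three9p}, is the careful bookkeeping in commuting the Laplacian past $\nabla H$ via the Codazzi and Gauss equations and then correctly inserting the constraint $H = \langle x, N\rangle + \scon$ — in particular keeping track of which terms pick up the factor $\scon$, since the whole point of the lemma is to isolate the single $-\scon\langle A^{2},A\rangle$ term. Since the paper has already established \eqref{three9p} in Lemma \ref{lemma30}, I would lean on that and reduce the proof to the two short algebraic manipulations above: contracting \eqref{three9p} with $A$, and the product-rule identity converting $\langle A, LA\rangle$ into $\vnormt{A}L\vnormt{A}$ plus the gradient discrepancy. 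The Kato inequality is then the only genuinely inequality-flavored input and it is elementary.
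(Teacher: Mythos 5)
Your strategy is the right one and is essentially the standard derivation (the paper does not give a proof, only cites \cite{simons68,colding12a,cheng15,zhu16}, and those sources use the same contraction-plus-Kato argument). Contracting \eqref{three9p} with $A$ to get $\langle A, LA\rangle = 2\vnormt{A}^{2}-\scon\langle A^{2},A\rangle$, converting to $\vnormt{A}L\vnormt{A}$ via the Bochner-type pointwise identity, and then applying Kato, is exactly the intended reduction given that Lemma \ref{lemma30} is already in place. The Kato step and the drift/zeroth-order bookkeeping are handled correctly.

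However, there is a sign error in the conversion step that you should fix. From your own displayed identity
\begin{equation*}
\vnormt{\nabla\vnormt{A}}^{2} + \vnormt{A}\Delta\vnormt{A} = \vnormt{\nabla A}^{2} + \langle A, \Delta A\rangle,
\end{equation*}
one gets $\vnormt{A}\Delta\vnormt{A} = \langle A, \Delta A\rangle + \vnormt{\nabla A}^{2} - \vnormt{\nabla\vnormt{A}}^{2}$, and since the drift and zeroth-order contributions are identical on both sides, the correct relation is
\begin{equation*}
\vnormt{A}L\vnormt{A} = \langle A, LA\rangle + \vnormt{\nabla A}^{2} - \vnormt{\nabla\vnormt{A}}^{2},
\end{equation*}
not $\vnormt{A}L\vnormt{A} = \langle A, LA\rangle - \vnormt{\nabla A}^{2} + \vnormt{\nabla\vnormt{A}}^{2}$ as written in your opening paragraph, nor $\langle A, LA\rangle = \vnormt{A}L\vnormt{A} + \vnormt{\nabla A}^{2} - \vnormt{\nabla\vnormt{A}}^{2}$ as written after ``Subtracting.'' With your stated intermediate relation, substituting $\langle A, LA\rangle = 2\vnormt{A}^{2}-\scon\langle A^{2},A\rangle$ would give $\vnormt{A}L\vnormt{A} = 2\vnormt{A}^{2}-\scon\langle A^{2},A\rangle - \vnormt{\nabla A}^{2} + \vnormt{\nabla\vnormt{A}}^{2}$, which together with Kato would yield the inequality in the wrong direction. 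Your final displayed identity happens to be correct (it matches the lemma), but it does not follow from the intermediate relation you wrote; you should replace that line with the corrected sign.
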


\begin{lemma}[{\cite[Lemma 10.2]{colding12a}}]\label{lemma39.9}
Let $\Sigma\subset\R^{\adimn}$ be any $\sdimn$-dimensional hypersurface.  Then
\begin{equation}\label{two12}
\Big(1+\frac{2}{\adimn}\Big)\vnormt{\nabla\vnormt{A}}^{2}
\leq \vnormt{\nabla A}^{2}+\frac{2\sdimn}{\adimn}\vnormt{\nabla H}^{2}.
\end{equation}
\end{lemma}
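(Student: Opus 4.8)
The plan is to prove $\eqref{two12}$ as a pointwise inequality at an arbitrary $p\in\Sigma$, in a frame adapted to $p$; the whole point is that the Codazzi equations improve the trivial Kato bound $\vnormt{\nabla\vnormt{A}}\leq\vnormt{\nabla A}$. Since $\Sigma$ sits inside the flat space $\R^{\adimn}$, the Codazzi equations together with the symmetry of $A$ say that $\nabla A$, regarded as the $3$-tensor with components $a_{ij,k}\colonequals(\nabla_{e_k}A)(e_i,e_j)$, is symmetric under every permutation of $i,j,k$. I would fix $p$, choose an orthonormal frame near $p$ that diagonalizes $A$ at $p$ with eigenvalues $\lambda_1,\dots,\lambda_{\sdimn}$, and record that at $p$: $\vnormt{A}^2=\sum_i\lambda_i^2$; $\vnormt{\nabla A}^2=\sum_{i,j,k}a_{ij,k}^2$; $\nabla_kH=-\sum_ia_{ii,k}$ (from $H=-\sum_ia_{ii}$ in $\eqref{three4}$, since the metric trace commutes with $\nabla$), so $\vnormt{\nabla H}^2=\sum_k(\sum_ia_{ii,k})^2$; and $\vnormt{A}\,\nabla_k\vnormt{A}=\tfrac12\nabla_k\vnormt{A}^2=\langle\nabla_{e_k}A,A\rangle=\sum_i\lambda_ia_{ii,k}$. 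Where $\vnormt{A}=0$ one has $\nabla\vnormt{A}=0$ a.e.\ and the inequality is trivial, so I may assume $\vnormt{A}(p)>0$.

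Writing $D\colonequals\sum_ia_{ii,i}^2$ and $Q\colonequals\sum_{i\neq k}a_{ii,k}^2$ (the second sum over ordered pairs), the plan then rests on three elementary estimates at $p$. First, Cauchy--Schwarz in $i$ with weights $\lambda_i$ gives $\vnormt{A}^2(\nabla_k\vnormt{A})^2\leq(\sum_i\lambda_i^2)(\sum_ia_{ii,k}^2)$, whence $\vnormt{\nabla\vnormt{A}}^2\leq\sum_{i,k}a_{ii,k}^2=D+Q$. Second, since by total symmetry each $a_{ii,k}$ with $i\neq k$ equals $a_{pq,r}$ for the three ordered triples $(p,q,r)$ whose index multiset is $\{i,i,k\}$, discarding the terms with $p,q,r$ distinct yields $\vnormt{\nabla A}^2\geq D+3Q$. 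Third, for each $k$ one writes $\sum_ia_{ii,k}=a_{kk,k}+\sum_{i\neq k}a_{ii,k}$, applies $(x+y)^2\geq\tfrac1{\sdimn}x^2-\tfrac1{\sdimn-1}y^2$ (trivial for $\sdimn=1$, when $Q=0$) and then $(\sum_{i\neq k}a_{ii,k})^2\leq(\sdimn-1)\sum_{i\neq k}a_{ii,k}^2$, and sums over $k$ to get $\vnormt{\nabla H}^2\geq\tfrac1{\sdimn}D-Q$, i.e.\ $D\leq\sdimn\,Q+\sdimn\,\vnormt{\nabla H}^2$.

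Finally I would combine these linearly. Using the first and second estimates, it suffices to check $(1+\tfrac2{\adimn})(D+Q)\leq(D+3Q)+\tfrac{2\sdimn}{\adimn}\vnormt{\nabla H}^2$; clearing the denominator $\adimn=\sdimn+1$ and simplifying, this is exactly $2D\leq2\sdimn\,Q+2\sdimn\,\vnormt{\nabla H}^2$, i.e.\ the third estimate. Since $p$ was arbitrary and each of $\vnormt{\nabla\vnormt{A}}$, $\vnormt{\nabla A}$, $\vnormt{\nabla H}$ is frame-independent, $\eqref{two12}$ follows.

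Once one knows $\nabla A$ is totally symmetric, the whole argument is Cauchy--Schwarz; the only delicate points are the combinatorial count producing the factor $3$ in the second estimate, and the fact that the constants $\tfrac1{\sdimn}$, $\tfrac1{\sdimn-1}$ in the split used for the third estimate are precisely the ones that make the final arithmetic close up to the stated coefficients $1+\tfrac2{\adimn}$ and $\tfrac{2\sdimn}{\adimn}$ — a weaker split would force an infinite coefficient in front of $\vnormt{\nabla H}^2$. I expect that choosing that split correctly (equivalently, the correct weighting in the underlying Cauchy--Schwarz) is the only real step; everything else is bookkeeping, and the essential conceptual input is Codazzi, which is what upgrades $\vnormt{\nabla\vnormt{A}}\leq\vnormt{\nabla A}$ to $\eqref{two12}$.
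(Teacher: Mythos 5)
Your proof is correct. Since the paper simply cites this result to Colding--Minicozzi without reproducing a proof, the relevant comparison is to their argument, and yours is essentially the same standard one: total symmetry of $\nabla A$ from Codazzi, a frame diagonalizing $A$ at a point, the weighted Cauchy--Schwarz giving $\vnormtf{\nabla\vnormtf{A}}^2\leq\sum_{i,k}a_{ii,k}^2$, the combinatorial count yielding the factor $3$ on the off-diagonal-type terms, and the sharp split $(x+y)^2\geq\tfrac{1}{n}x^2-\tfrac{1}{n-1}y^2$ (whose discriminant is exactly zero, which is what makes the constants close up). The arithmetic at the end checks out: clearing $n+1$ reduces the target to $D\leq nQ+n\vnormtf{\nabla H}^2$, which is your third estimate.
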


\begin{lemma}[{\cite[Proposition 10.14]{colding12a}, \cite[Lemma 5.9]{heilman17}}]\label{lemma39.2}
Let $\Omega$ minimize problem \ref{prob1}.  Assume $\pcon(\redb\Omega)<\infty$.  Assume that $H\geq0$ on $\Sigma$  (From Lemma \ref{varlem}, $H(x)=\langle x,N(x)\rangle+\scon$ for all $x\in\Sigma$).  Assume $\scon<0$.  Then
$$\int_{\Sigma}(\vnormt{A}^{2}+\vnormt{A}^{4}+\vnormt{\nabla\vnormt{A}}^{2}+\vnormt{\nabla A}^{2})\gamma_{\sdimn}(x)dx<\infty.$$
\end{lemma}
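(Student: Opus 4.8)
The plan is to adapt the proof of \cite[Proposition 10.14]{colding12a}, which proves exactly these bounds for self-shrinkers with $H>0$ (i.e. the case $\scon=0$), to the present situation; the only genuinely new ingredient will be the bookkeeping of the term $\scon\langle A^{2},A\rangle$ that appears in the Simons-type inequality \eqref{three30} of Lemma \ref{lemma38} once $\scon\neq0$. Since $\Omega$ minimizes Problem \ref{prob1}, I would freely use that $\Sigma=\redb\Omega$ is $C^{\infty}$ off a set of Hausdorff dimension at most $\sdimn-7$ (Lemma \ref{lemma51}), has polynomial volume growth (Lemma \ref{polyvollem}) and finite Gaussian area $\gamma_{\sdimn}(\Sigma)<\infty$, satisfies $H=\langle x,N\rangle+\scon$ (Lemma \ref{varlem}) and $\pcon(\Sigma)<\infty$ (Lemma \ref{lemma95.1}), and that $\vnormt{\nabla H}\in L_{2}(\Sigma,\gamma_{\sdimn})$ by Lemma \ref{lemma39.1}. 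Fix once and for all a cutoff $\phi_{R}\colon\Sigma\to[0,1]$ equal to $1$ on $B(0,R)$, vanishing outside $B(0,2R)$, with $\vnormt{\nabla\phi_{R}}\leq C/R$; as in \cite{colding12a,zhu16,heilman17} one also carries an auxiliary cutoff near $\partial\Omega\setminus\redb\Omega$, which I suppress in this sketch.

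First I would record the stability inequality: unwinding the definition \eqref{seven0} of $\pcon(\Sigma)$ and integrating by parts (Lemma \ref{lemma39.7}) gives, for every $f\in C_{0}^{\infty}(\Sigma)$,
$$\int_{\Sigma}\vnormt{A}^{2}f^{2}\gamma_{\sdimn}(x)\,\d x\leq\int_{\Sigma}\vnormt{\nabla f}^{2}\gamma_{\sdimn}(x)\,\d x+(\pcon(\Sigma)-1)\int_{\Sigma}f^{2}\gamma_{\sdimn}(x)\,\d x,$$
and this extends to compactly supported Lipschitz $f$ (supported away from the singular set) by density. Taking $f=\phi_{R}$ and using $\gamma_{\sdimn}(\Sigma)<\infty$, then letting $R\to\infty$, immediately yields $\vnormt{A}^{2}\in L_{1}(\Sigma,\gamma_{\sdimn})$. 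Next I would take $f=\phi_{R}\vnormt{A}$ in the same inequality (here $\vnormt{A}$ is locally Lipschitz, smooth where $\vnormt{A}>0$, with $\vnormt{\nabla\vnormt{A}}\leq\vnormt{\nabla A}$ a.e. by Kato's inequality), expand $\vnormt{\nabla(\phi_{R}\vnormt{A})}^{2}$, and use Young's inequality, obtaining $\int\phi_{R}^{2}\vnormt{A}^{4}\gamma_{\sdimn}\leq(1+\epsilon)\int\phi_{R}^{2}\vnormt{\nabla\vnormt{A}}^{2}\gamma_{\sdimn}+C(\epsilon)$, with $C(\epsilon)$ bounded by a constant multiple of $\int_{\Sigma}\vnormt{A}^{2}\gamma_{\sdimn}$, independently of $R$.

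The crux is then to estimate $\int\phi_{R}^{2}\vnormt{\nabla\vnormt{A}}^{2}\gamma_{\sdimn}$ the other way. I would take the full Simons identity \eqref{three30}, \emph{retaining} the term $\vnormt{\nabla A}^{2}-\vnormt{\nabla\vnormt{A}}^{2}$, insert the improved Kato inequality \eqref{two12} of Lemma \ref{lemma39.9} (replacing that term by $\tfrac{2}{\adimn}\vnormt{\nabla\vnormt{A}}^{2}-\tfrac{2\sdimn}{\adimn}\vnormt{\nabla H}^{2}$), rewrite $\vnormt{A}L\vnormt{A}=\vnormt{A}\mathcal{L}\vnormt{A}+\vnormt{A}^{2}+\vnormt{A}^{4}$, multiply by $\phi_{R}^{2}$, integrate by parts (Lemma \ref{lemma39.79}), bound $|\langle A^{2},A\rangle|\leq\vnormt{A}^{3}$, and absorb the cross term $\int\phi_{R}\vnormt{A}\langle\nabla\phi_{R},\nabla\vnormt{A}\rangle$ and the term $|\scon|\int\phi_{R}^{2}\vnormt{A}^{3}$ by Young's inequality. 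This should produce
$$\Big(1+\tfrac{2}{\adimn}-\epsilon\Big)\int_{\Sigma}\phi_{R}^{2}\vnormt{\nabla\vnormt{A}}^{2}\gamma_{\sdimn}(x)\,\d x\leq\Big(1+\tfrac{|\scon|\epsilon}{2}\Big)\int_{\Sigma}\phi_{R}^{2}\vnormt{A}^{4}\gamma_{\sdimn}(x)\,\d x+C(\epsilon),$$
with $C(\epsilon)$ depending only on $\epsilon,\scon,\adimn$ and on the finite quantities $\int_{\Sigma}\vnormt{A}^{2}\gamma_{\sdimn}$ and $\int_{\Sigma}\vnormt{\nabla H}^{2}\gamma_{\sdimn}$, not on $R$. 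Writing $P_{R}$ and $Q_{R}$ for the two integrals, I would substitute $Q_{R}\leq(1+\epsilon)P_{R}+C(\epsilon)$ into this and choose $\epsilon=\epsilon(\scon,\adimn)>0$ small enough that $\big(1+\tfrac{2}{\adimn}-\epsilon\big)-\big(1+\tfrac{|\scon|\epsilon}{2}\big)(1+\epsilon)>0$; the resulting bound $P_{R}\leq C$ is uniform in $R$, so $\vnormt{\nabla\vnormt{A}}^{2}\in L_{1}(\Sigma,\gamma_{\sdimn})$ by monotone convergence, and then $\vnormt{A}^{4}\in L_{1}(\Sigma,\gamma_{\sdimn})$. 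It is precisely the positive gain $\tfrac{2}{\adimn}$ from \eqref{two12} that makes the absorption go through; the novelty over \cite{colding12a} is only that $\epsilon$ must be chosen small in terms of $\scon$ as well. Finally, rearranging \eqref{three30} as $\vnormt{\nabla A}^{2}-\vnormt{\nabla\vnormt{A}}^{2}=\vnormt{A}\mathcal{L}\vnormt{A}+\vnormt{A}^{4}-\vnormt{A}^{2}+\scon\langle A^{2},A\rangle$, multiplying by $\phi_{R}^{2}$, integrating by parts once more, and using $\vnormt{A}^{3}\leq\vnormt{A}^{2}+\vnormt{A}^{4}$, every term on the right is now finite uniformly in $R$ by the previous steps, so $\vnormt{\nabla A}^{2}\in L_{1}(\Sigma,\gamma_{\sdimn})$ as well, completing the proof.

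The step I expect to be the main obstacle is not this algebra but the two integrations by parts against $\phi_{R}$: the noncompactness of $\Sigma$ together with the codimension-$7$ singular set $\partial\Omega\setminus\redb\Omega$ force one to verify the integrability hypotheses of Corollary \ref{lemma39.79} (in particular its $L^{6/5}$-type condition, which is what permits excising the singular set) and to control the auxiliary cutoff near that set, together with the passage from smooth test functions to the merely Lipschitz function $\phi_{R}\vnormt{A}$ and the interpretation of $\nabla\vnormt{A}$ at points where $\vnormt{A}=0$. These are exactly the technicalities flagged just before the statement, and they are to be dealt with as in \cite{colding12a,zhu16,heilman17}.
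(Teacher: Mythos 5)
Your proof is correct, and while the overall skeleton (Poincar\'{e}-type inequality controlling $\int\vnormt{A}^{2}$ and $\int\vnormt{A}^{4}$, Simons' identity combined with the improved Kato inequality \eqref{two12}, then an absorption argument with $\epsilon$ chosen small depending on $\scon$ and $\adimn$) is the same, the route to the initial Poincar\'{e}-type inequality is genuinely different. The paper first uses the strong maximum principle applied to $(\Delta-\langle x,\nabla\rangle-1)H=(\scon-H)\vnormt{A}^{2}\leq 0$ to upgrade $H\geq 0$ to $H>0$, then computes $\mathcal{L}\log H=-\vnormt{\nabla\log H}^{2}+(1-\vnormt{A}^{2})+\scon\vnormt{A}^{2}/H$ and integrates against $\phi^{2}$; here the assumptions $H>0$ and $\scon<0$ make the awkward term $-\scon\vnormt{A}^{2}/H$ nonnegative so it can be dropped, yielding $\int\phi^{2}\vnormt{A}^{2}\gamma_{\sdimn}\leq\int(\vnormt{\nabla\phi}^{2}+\phi^{2})\gamma_{\sdimn}$. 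You instead get the analogous inequality (with constant $\pcon(\Sigma)-1$ in place of $1$) directly from the definition \eqref{seven0} of $\pcon$ together with one integration by parts, bypassing both the maximum principle and the $\log H$ computation. Your version is more elementary and, as stated, never actually invokes the hypotheses $H\geq 0$ and $\scon<0$ (only $\pcon<\infty$, which is supplied by Lemma \ref{lemma95.1} and is explicitly assumed), whereas the paper's argument depends on them essentially; the paper's approach hews closer to \cite[Proposition 10.14]{colding12a} and gives the sharper constant. Everything after that -- taking $\phi_{R}\vnormt{A}$ as test function, multiplying Simons' inequality by $\phi_{R}^{2}$ and integrating by parts, bounding $\abs{\scon\langle A^{2},A\rangle}\leq\abs{\scon}\vnormt{A}^{3}$ and absorbing via Young, the $\eta_{r}$-type cutoff estimate \eqref{two16} near infinity and near the singular set, and the final pass for $\int\vnormt{\nabla A}^{2}$ -- matches the paper's proof step for step, and the technicalities you flag at the end (validity of Corollary \ref{lemma39.79}, Lipschitz test functions, behavior near $\partial\Omega\setminus\redA$) are exactly the ones the paper handles by citing \cite[Lemma 6.4]{zhu16}.
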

\begin{proof}
Since $H\geq0$, and 
$$(\Delta -\langle x,\nabla \rangle-1)H\stackrel{\eqref{three4.5}}{=}(L-2-\vnorm{A}^{2})H\stackrel{\eqref{three9}}{=}(\lambda-H)\vnorm{A}^{2}\leq0,$$
the maximum principle \cite[Theorem 6.4.2]{evans98} (or in this case, the minimum principle applied to $L-2-\vnorm{A}^{2}$) implies that $H>0$ on $\Sigma$.  Then $\log H$ is well-defined, so 
\begin{equation}\label{logeq}
\begin{aligned}
&\mathcal{L}\log H
\stackrel{\eqref{three4.3}}{=}\sum_{i=1}^{\sdimn}\nabla_{e_{i}}\left(\frac{\nabla_{e_{i}}H}{H}\right)-\frac{\langle x,\nabla H\rangle}{H}
=\sum_{i=1}^{\sdimn}\frac{-(\nabla_{e_{i}}H)^{2}}{H^{2}}+\frac{\mathcal{L}H}{H}
=-\vnormt{\nabla\log H}^{2}+\frac{\mathcal{L}H}{H}\\
&\stackrel{\eqref{three4.5}}{=}-\vnormt{\nabla\log H}^{2}+\frac{LH -\vnormt{A}^{2}H-H}{H}
\stackrel{\eqref{three9}}{=}-\vnormt{\nabla\log H}^{2}+\frac{2H+\scon\vnormt{A}^{2} -\vnormt{A}^{2}H-H}{H}\\
&\,\,=-\vnormt{\nabla\log H}^{2}+(1-\vnormt{A}^{2})+\scon\frac{\vnormt{A}^{2}}{H}.
\end{aligned}
\end{equation}

Note that $\int_{\Sigma}\vnormt{A}^{2}\gamma_{\sdimn}(x)dx<\infty$ by Corollary \ref{cor5}.  Let $\phi\in C_{0}^{\infty}(\Sigma)$.  Integrating by parts with Lemma \ref{lemma39.7},
\begin{flalign*}
&\int_{\Sigma}\langle\nabla\phi^{2},\nabla\log H\rangle\gamma_{\sdimn}(x)dx
=-\int_{\Sigma}\phi^{2}\mathcal{L}\log H\gamma_{\sdimn}(x)dx\\
&\qquad\qquad\qquad\stackrel{\eqref{logeq}}{=}\int_{\Sigma}\phi^{2}\Big(\vnormt{\nabla\log H}^{2}+(-1+\vnormt{A}^{2})-\scon\frac{\vnormt{A}^{2}}{H}\Big)\gamma_{\sdimn}(x)dx.
\end{flalign*}
From the AMGM inequality, $\abs{\langle\nabla\phi^{2},\nabla\log H\rangle}\leq\vnormt{\nabla\phi}^{2}+\phi^{2}\vnormt{\nabla\log H}^{2}$, so that
\begin{equation}\label{two57z}
\int_{\Sigma}\phi^{2}\Big(\vnormt{A}^{2}-\scon\frac{\vnorm{A}^{2}}{H}\Big)\gamma_{\sdimn}(x)dx
\leq\int_{\Sigma}\Big[\vnormt{\nabla \phi}^{2}+\phi^{2}\Big]\gamma_{\sdimn}(x)dx.
\end{equation}
Recall that we assumed $\scon<0$ so the integral on the left is nonnegative.  Therefore,
\begin{equation}\label{two57}
\int_{\Sigma}\phi^{2}\vnormt{A}^{2}\gamma_{\sdimn}(x)dx
\leq\int_{\Sigma}\Big[\vnormt{\nabla \phi}^{2}+\phi^{2}\Big]\gamma_{\sdimn}(x)dx.
\end{equation}
Let $0<\epsilon<1/2$ to be chosen later.  Using now $\phi\colonequals\eta\vnormt{A}$ in \eqref{two57}, where $\eta\in C_{0}^{\infty}(\Sigma)$, $\eta\geq0$, and using the AMGM inequality in the form $2ab\leq \epsilon a^{2}+b^{2}/\epsilon$, $a,b>0$,
\begin{equation}\label{two13}
\begin{aligned}
&\int_{\Sigma}\eta^{2}\vnormt{A}^{4}\gamma_{\sdimn}(x)dx\\
&\leq\int_{\Sigma}\Big[\eta^{2}\vnormt{\nabla\vnormt{A}}^{2}+2\eta\vnormt{A}\vnormt{\nabla\vnormt{A}}\vnormt{\nabla\eta}
+\vnormt{A}^{2}\vnormt{\nabla\eta}^{2}+\eta^{2}\vnormt{A}^{2}\Big]\gamma_{\sdimn}(x)dx\\
&\leq\int_{\Sigma}\Big[(1+\epsilon)\eta^{2}\vnormt{\nabla\vnormt{A}}^{2}
+\vnormt{A}^{2}\vnormt{\nabla\eta}^{2}(1+1/\epsilon)+\eta^{2}\vnormt{A}^{2}\Big]\gamma_{\sdimn}(x)dx.
\end{aligned}
\end{equation}

Using the product rule for $\mathcal{L}$, and that $\mathcal{L}=L-\vnormt{A}^{2}-1$
\begin{flalign*}
\frac{1}{2}\mathcal{L}\vnormt{A}^{2}
&=\vnormt{\nabla\vnormt{A}}^{2}+\vnormt{A}\mathcal{L}\vnormt{A}
=\vnormt{\nabla\vnormt{A}}^{2}+\vnormt{A}\Big(L\vnormt{A}-\vnormt{A}^{3}-\vnormt{A}\Big)\\
&\stackrel{\eqref{three30}}{=}\vnormt{\nabla\vnormt{A}}^{2}+2\vnormt{A}^{2}-\scon\langle A^{2},A\rangle
+\vnormt{\nabla A}^{2}-\vnormt{\nabla \vnormt{A}}^{2}-\vnormt{A}^{4}-\vnormt{A}^{2}\\
&=\vnormt{\nabla A}^{2}+\vnormt{A}^{2}-\vnormt{A}^{4}-\scon\langle A^{2},A\rangle\\
&\stackrel{\eqref{two12}}{\geq}
\Big(1+\frac{2}{\adimn}\Big)\vnormt{\nabla\vnormt{A}}^{2}
-\frac{2\sdimn}{\adimn}\vnormt{\nabla H}^{2}
+\vnormt{A}^{2}-\vnormt{A}^{4}-\scon\langle A^{2},A\rangle.
\end{flalign*}
Multiplying this inequality by $\eta^{2}$ and integrating by parts with Lemma \ref{lemma39.7},
\begin{flalign*}
&-2\int_{\Sigma}\eta\vnormt{A}\langle\nabla\eta,\nabla\vnormt{A}\rangle\gamma_{\sdimn}(x)dx
=-\frac{1}{2}\int_{\Sigma}\langle\nabla\eta^{2},\nabla\vnormt{A}^{2}\rangle\gamma_{\sdimn}(x)dx
=\frac{1}{2}\int_{\Sigma}\eta^{2}\mathcal{L}\vnormt{A}^{2}\gamma_{\sdimn}(x)dx\\
&\qquad\geq\int_{\Sigma}\eta^{2}\Big(\Big(1+\frac{2}{\adimn}\Big)\vnormt{\nabla\vnormt{A}}^{2}
-\frac{2\sdimn}{\adimn}\vnormt{\nabla H}^{2}
-\vnormt{A}^{4}-\scon\langle A^{2},A\rangle\Big)\gamma_{\sdimn}(x)dx.
\end{flalign*}
(We removed the $\vnormt{A}^{2}$ term since doing so only decreases the quantity on the right.)  Rearranging this inequality and then using the AMGM inequality in the form $b^{2}/\epsilon-2ab\geq -\epsilon a^{2}$,
\begin{equation}\label{two14}
\begin{aligned}
&\int_{\Sigma}\Big(\eta^{2}\vnormt{A}^{4}+\scon\eta^{2}\langle A^{2},A\rangle
+\frac{2\sdimn}{\adimn}\eta^{2}\vnormt{\nabla H}^{2}+\frac{1}{\epsilon}\eta^{2}\vnormt{A}^{2}\vnormt{\nabla \eta}^{2}\Big)\gamma_{\sdimn}(x)dx\\
&\qquad\qquad\geq \Big(1+\frac{2}{\adimn}-\epsilon\Big)\int_{\Sigma}\eta^{2}\vnormt{\nabla\vnormt{A}}^{2}\gamma_{\sdimn}(x)dx.
\end{aligned}
\end{equation}

Substituting \eqref{two14} into \eqref{two13},
\begin{flalign*}
&\int_{\Sigma}\eta^{2}\vnormt{A}^{4}\gamma_{\sdimn}(x)dx
\leq \frac{1+\epsilon}{1+\frac{2}{\adimn}-\epsilon}\int_{\Sigma}\eta^{2}\vnormt{A}^{4}\gamma_{\sdimn}(x)dx\\
&\quad
+10\int_{\Sigma}\Big[\eta^{2}\vnormt{\nabla H}^{2}+\eta^{2}(1+1/\epsilon)(1+\vnormt{\nabla\eta}^{2})\vnormt{A}^{2}+\abs{\scon}\eta^{2}\Big(\abs{\langle A,A^{2}\rangle}\Big)\Big]\gamma_{\sdimn}(x)dx.
\end{flalign*}
Note that $\abs{\langle A,A^{2}\rangle}\leq\vnormt{A}^{3}$.  Using the AMGM inequality in the form $2a^{3}\leq a^{4}\epsilon+a^{2}/\epsilon$, we then get
\begin{flalign*}
\int_{\Sigma}\eta^{2}\vnormt{A}^{4}\gamma_{\sdimn}(x)dx
&\leq \Big(10\epsilon\abs{\scon}+\frac{1+\epsilon}{1+\frac{2}{\adimn}-\epsilon}\Big)\int_{\Sigma}\eta^{2}\vnormt{A}^{4}\gamma_{\sdimn}(x)dx\\
&\qquad
+10\int_{\Sigma}\eta^{2}\Big(\vnormt{\nabla H}^{2}+(1+(1+\abs{\scon})/\epsilon)(1+\vnormt{\nabla\eta}^{2})\vnormt{A}^{2}\Big)\gamma_{\sdimn}(x)dx.
\end{flalign*}

Now, choose $\epsilon<1/(20(n+1)(\abs{\scon}+1))$, so that $10\abs{\scon}\epsilon+\frac{1+\epsilon}{1+\frac{2}{\adimn}-\epsilon}<1$.  We then can move the $\eta^{2}\vnormt{A}^{4}$ term on the right side to the left side to get some $c_{\epsilon}>0$ such that
\begin{flalign*}
\int_{\Sigma}\eta^{2}\vnormt{A}^{4}\gamma_{\sdimn}(x)dx
&\leq c_{\epsilon}\int_{\Sigma}\eta^{2}\Big[\vnormt{\nabla H}^{2}+\vnormt{A}^{2}(1+\vnormt{\nabla\eta}^{2})\Big]\gamma_{\sdimn}(x)dx\\
&\leq c_{\epsilon}\int_{\Sigma}\eta^{2}\Big[\vnormt{A}^{2}(1+\vnormt{x}^{2}+\vnormt{\nabla\eta}^{2})\Big]\gamma_{\sdimn}(x)dx.
\end{flalign*}
In the last line, we used the inequality $\vnormt{\nabla H}^{2}\leq\vnormt{A}^{2}\vnormt{x}^{2}$.  This follows by Lemma \ref{varlem}, since $H(x)=\langle x,N\rangle+\scon$, so for any $1\leq i\leq\sdimn$, $\nabla_{e_{i}}H(x)=-\sum_{j=1}^{\sdimn}a_{ij}\langle x,e_{j}\rangle$.

We now choose a sequence of $\eta=\eta_{r}$ increasing to $1$ as $r\to\infty$ so that the $\vnormt{\nabla\eta}^{2}$ term vanishes.  This is possible due to the assumptions that $\delta(\redb\Omega)<\infty$ and the Hausdorff dimension of $\partial\Omega\setminus\redA$ is at most $n-4$.  Such functions are constructed and this estimate is made in \cite[Lemma 6.4]{zhu16}:
\begin{equation}\label{two16}
\int_{\Sigma}\vnormt{A}^{2}\vnormt{\nabla\eta_{r}}^{2}\eta_{r}^{2}\gamma_{\sdimn}(x)dx
\leq c(\delta)(r^{\sdimn}e^{-(r-4)^{2}/4}+r^{-1}),\qquad\forall r>1.
\end{equation}

It therefore follows from Corollary \ref{cor5} applied to $\Sigma=\redA$ that
\begin{equation}\label{two15}
\int_{\Sigma}\vnormt{A}^{4}\gamma_{\sdimn}(x)dx<\infty.
\end{equation}
It then follows from \eqref{two14} that $\int_{\Sigma}\vnormt{\nabla\vnormt{A}}^{2}\gamma_{\sdimn}(x)dx<\infty.$

Finally, multiplying the above equality
$\mathcal{L}\vnormt{A}^{2}=2\vnormt{\nabla A}^{2}+2\vnormt{A}^{2}-2\vnormt{A}^{4}-2\scon\langle A^{2},A\rangle$ by $\eta^{2}$ and integrating by parts with Lemma \ref{lemma39.7}, we get
\begin{flalign*}
&2\int_{\Sigma}\eta^{2}(\vnormt{\nabla A}^{2}-\vnormt{A}^{4}-\scon\langle A^{2},A\rangle)\gamma_{\sdimn}(x)dx
\leq \int_{\Sigma}\eta^{2}\mathcal{L}\vnormt{A}^{2}\gamma_{\sdimn}(x)dx\\
&\qquad=-4\int_{\Sigma}\eta\vnormt{A}\langle\nabla\eta,\nabla\vnormt{A}\rangle\gamma_{\sdimn}(x)dx
\leq2\int_{\Sigma}\eta^{2}\vnormt{\nabla\vnormt{A}}^{2}+\vnormt{A}^{2}\vnormt{\nabla\eta}^{2}\rangle\gamma_{\sdimn}(x)dx.
\end{flalign*}
Then the $\vnormt{A}^{4}$ integral is finite by \eqref{two15}, the $\vnormt{\nabla\vnormt{A}}^{2}$ integral is finite, the last term has a finite integral by \eqref{two16}, so the integral of $\vnormt{\nabla A}^{2}$ is also finite.
\end{proof}

Recall the definition of $\smax(A)$ from \eqref{smaxdef}.  In order to integrate by parts with $\smax(A)$ via Lemma \ref{lemma39.79}, we need the following Lemma

\begin{lemma}\label{smaxlemma}
Let $\beta>0$.  Then
$$\sup_{\beta>0}\int_{\Sigma}\vnorm{\nabla\smax(A)}^{2}\gamma_{\sdimn}(x)\,\d x<\infty,
\qquad\qquad\sup_{\beta>1}\int_{\Sigma}\abs{\smax(A)\mathcal{L}\smax(A)}\gamma_{\sdimn}(x)\, \d x<\infty,$$
$$\sup_{\beta>1}\int_{\Sigma}\vnorm{\smax(A)\nabla\smax(A)}^{6/5}\gamma_{\sdimn}(x)\, \d x<\infty.$$
\end{lemma}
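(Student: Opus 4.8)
The plan is to bound the three quantities in terms of $\vnormt{A}$, $\vnormt{\nabla A}$, and $\vnormt{\nabla\vnormt{A}}$, all of which are square-integrable against $\gamma_{\sdimn}$ by Lemma \ref{lemma39.2} (note that in our situation $H\geq0$ and $\scon<0$, so Lemma \ref{lemma39.2} applies). First, for the gradient term: from \eqref{chat3} we have $\nabla_{e_{i}}\smax(A)=\mathrm{Tr}(e^{\beta A}\nabla_{e_{i}}A)/\mathrm{Tr}(e^{\beta A})$, and since $C\colonequals e^{\beta A}/\mathrm{Tr}(e^{\beta A})$ is positive definite with trace $1$, Cauchy--Schwarz for the inner product $\langle X,Y\rangle_{C}\colonequals\mathrm{Tr}(CXY)$ gives $\abs{\mathrm{Tr}(C\nabla_{e_{i}}A)}^{2}\leq\mathrm{Tr}(C(\nabla_{e_{i}}A)^{2})\leq\mathrm{Tr}((\nabla_{e_{i}}A)^{2})=\vnormt{\nabla_{e_{i}}A}^{2}$, where the middle step uses $C\leq I$ in the operator order — actually one only needs that $\mathrm{Tr}(CB^{2})\leq\lambda_{\max}(C)\mathrm{Tr}(B^{2})\leq\mathrm{Tr}(B^{2})$ since $0\leq C\leq I$. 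Summing over $i$ yields the pointwise bound $\vnorm{\nabla\smax(A)}^{2}\leq\vnormt{\nabla A}^{2}$, uniformly in $\beta>0$, and integrating gives the first estimate.

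For the second quantity, I would use the identity \eqref{lsofeq} together with the estimate $0\leq\beta\sum_{i}\mathrm{Tr}(C[\nabla_{e_{i}}A-I_{\sdimn}\mathrm{Tr}(C\nabla_{e_{i}}A)]^{2})$; the issue is that this extra term carries a factor of $\beta$, so I cannot bound it by something $\beta$-independent directly. Instead I would exploit its \emph{sign}: since it is nonnegative, $\mathcal{L}\smax(A)\geq\mathrm{Tr}(C\,\mathcal{L}A)$, and combined with the reverse bound one needs an upper bound on $\mathcal{L}\smax(A)$ as well. The cleanest route is to bound $\abs{\smax(A)\mathcal{L}\smax(A)}$ by first noting $\abs{\smax(A)}\leq\vnormt{A}+\beta^{-1}\log\sdimn\leq\vnormt{A}+\log\sdimn$ for $\beta>1$; then write $\smax(A)\mathcal{L}\smax(A)=\smax(A)\mathrm{Tr}(C\,\mathcal{L}A)+\beta\,\smax(A)\sum_{i}\mathrm{Tr}(C[\cdots]^{2})$. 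For the first piece, $\abs{\mathrm{Tr}(C\,\mathcal{L}A)}=\abs{\mathrm{Tr}(C(\Delta A-\langle x,\nabla A\rangle))}=\abs{\mathrm{Tr}(C(LA-A-\vnormt{A}^{2}A))}$, and by Lemma \ref{lemma30} this equals $\abs{\mathrm{Tr}(C(2A-\scon A^{2}-A-\vnormt{A}^{2}A))}\lesssim\vnormt{A}+\abs{\scon}\vnormt{A}^{2}+\vnormt{A}^{3}$, which when multiplied by $(\vnormt{A}+\log\sdimn)$ is dominated by $1+\vnormt{A}^{2}+\vnormt{A}^{4}$ and hence integrable by Lemma \ref{lemma39.2}. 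The genuinely delicate piece is the $\beta$-weighted sum; I expect to handle it by the inequality $\beta\,\mathrm{Tr}(C[\nabla_{e_{i}}A-\cdots]^{2})\leq\beta\,\mathrm{Tr}(C(\nabla_{e_{i}}A)^{2})$ and then to absorb the factor of $\beta$ by observing that $\smax(A)$ itself is differentiable with $\vnorm{\nabla\smax(A)}^{2}\leq\vnormt{\nabla A}^{2}$, so that an integration by parts (via Lemma \ref{lemma39.7} with cutoffs, justified by the first estimate just proved) trades the undifferentiated $\mathcal{L}\smax(A)$ for $\vnorm{\nabla\smax(A)}^{2}$, removing the need to control the $\beta$-term pointwise at all — that is, $\int_{\Sigma}\abs{\smax(A)\mathcal{L}\smax(A)}\gamma_{\sdimn}$ can be split using the sign of the extra term so that $\int\smax(A)\mathcal{L}\smax(A)\gamma_{\sdimn}=-\int\vnorm{\nabla\smax(A)}^{2}\gamma_{\sdimn}+\int\langle x,\nabla\smax(A)\rangle\smax(A)\gamma_{\sdimn}$ after integration by parts on the $\Delta$ part, and both resulting terms are controlled by $\int(\vnormt{\nabla A}^{2}+\vnormt{A}^{2}\vnormt{x}^{2}+\ldots)\gamma_{\sdimn}<\infty$. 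This, I anticipate, is the main obstacle: making the $\beta$-dependence genuinely harmless, which forces one to argue via the integrated identity rather than a pointwise bound.

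For the third quantity, Hölder's inequality reduces matters to finiteness of $\int_{\Sigma}\vnorm{\smax(A)\nabla\smax(A)}^{6/5}\gamma_{\sdimn}$: writing the integrand as $(\abs{\smax(A)}\vnorm{\nabla\smax(A)})^{6/5}$ and using $\abs{\smax(A)}\leq\vnormt{A}+\log\sdimn$ (for $\beta>1$) and $\vnorm{\nabla\smax(A)}\leq\vnormt{\nabla A}$, it suffices that $\int_{\Sigma}(\vnormt{A}+\log\sdimn)^{6/5}\vnormt{\nabla A}^{6/5}\gamma_{\sdimn}<\infty$. By Young's inequality $ab\leq\frac{5}{6}a^{6/5}\cdot\tfrac{6}{5}+\ldots$ — more simply, $a^{6/5}b^{6/5}\leq a^{12/5}+b^{12/5}$ up to constants is too crude; instead use $(\vnormt{A}\vnormt{\nabla A})^{6/5}\leq\tfrac12(\vnormt{A}^{12/5}+\vnormt{\nabla A}^{12/5})$ and then Hölder against the finite-measure $\gamma_{\sdimn}$ with exponents $5/3$ and $5/2$: $\int\vnormt{\nabla A}^{12/5}\gamma_{\sdimn}\leq(\int\vnormt{\nabla A}^{2}\gamma_{\sdimn})^{6/5}\cdot\gamma_{\sdimn}(\Sigma)^{?}$ — one checks $12/5<2$ so in fact $\int\vnormt{\nabla A}^{12/5}\gamma_{\sdimn}\leq(\int\vnormt{\nabla A}^{2}\gamma_{\sdimn})^{6/5}$ by Jensen since $\gamma_{\sdimn}$ restricted to $\Sigma$ has finite total mass (Lemma \ref{polyvollem} plus the Gaussian weight), and similarly $\int\vnormt{A}^{12/5}\gamma_{\sdimn}\leq(\int\vnormt{A}^{4}\gamma_{\sdimn})^{3/5}(\gamma_{\sdimn}(\Sigma))^{2/5}$; all finite by Lemma \ref{lemma39.2}. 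Taking the supremum over $\beta>1$ is immediate since every bound above was $\beta$-independent once $\beta>1$. This completes the sketch; I would then assemble the three estimates into the statement.
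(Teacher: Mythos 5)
Your first and third bounds are correct and follow essentially the paper's route: you use the pointwise estimates $\vnorm{\nabla\smax(A)}\leq\vnormt{\nabla A}$ (from \eqref{chat3} and $0\leq e^{\beta A}/\mathrm{Tr}(e^{\beta A})\leq I$) and $\abs{\smax(A)}\leq\vnormt{A}+\beta^{-1}\log\sdimn$, then invoke Lemma \ref{lemma39.2}. Where the paper applies H\"older with exponents $3$ and $3/2$ to land on $L^{18/5}$ and $L^{9/5}$ norms of $\vnormt{A}$ and $\vnormt{\nabla A}$, you use Young plus Jensen; this is an immaterial difference, and both work since $18/5<4$, $9/5<2$, $12/5<4$, $12/5<2$.

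The gap is in the second bound. You correctly identify the obstacle — the extra nonnegative term in \eqref{lsofeq} carries an unabsorbable factor of $\beta$ — and correctly propose to circumvent it via integration by parts. But the identity you record,
$$\int_\Sigma\smax(A)\mathcal{L}\smax(A)\,\gamma_{\sdimn}=-\int_\Sigma\vnorm{\nabla\smax(A)}^{2}\gamma_{\sdimn}+\cdots,$$
uses the full $\smax(A)$, and this only controls the \emph{signed} integral of $\smax(A)$ times the $\beta$-weighted term. That extra term is $\geq 0$ while $\smax(A)$ may change sign on $\Sigma$, so the product $\smax(A)\cdot(\text{extra term})$ changes sign, and finiteness of its signed integral does not imply finiteness of its absolute integral — the positive and negative parts could both be infinite and cancel. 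The paper resolves this by running the integration-by-parts argument twice, once with $h=\max(\smax(A),0)$ and once with $h=\max(-\smax(A),0)$: on $\{\smax(A)\geq0\}$ the product is nonnegative and on $\{\smax(A)\leq0\}$ it is nonpositive, so on each region the signed and absolute integrals agree, and summing the two gives the absolute-value bound. Your phrase ``split using the sign of the extra term'' gestures at this, but the needed split is on the sign of $\smax(A)$ (the extra term has a fixed sign), and your sketch never performs it — so the second estimate is not established as written. (A smaller point: Lemma \ref{lemma39.7} already folds the drift into the Gaussian weight, so integrating $\mathcal{L}$ by parts against $\gamma_{\sdimn}$ leaves no residual $\int\langle x,\nabla\smax(A)\rangle\smax(A)\gamma_{\sdimn}$ term as in your displayed identity.)
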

\begin{proof}
From \eqref{chat3} and Lemma \ref{lemma39.2}, we have for any $\beta>0$
$$
\int_{\Sigma}\vnorm{\nabla\smax(A)}^{2}\gamma_{\sdimn}(x)\,\d x
\leq \int_{\Sigma}\vnorm{\nabla A}^{2}\gamma_{\sdimn}(x)\,\d x
<\infty.
$$
Note that $\max_{\beta}(A)\leq\frac{\log n}{\beta}+\vnorm{A}$.  From \eqref{chat3}, H\"{o}lder's inequality with exponents $3$ and $3/2$, and Lemma \ref{lemma39.2},
\begin{flalign*}
&\int_{\Sigma}\vnorm{\smax(A)\nabla\smax(A)}^{6/5}\gamma_{\sdimn}(x)\,\d x\\
&\qquad\qquad\qquad\leq\Big(\int_{\Sigma}\Big(\frac{\log n}{\beta}+\vnorm{A}\Big)^{18/5}\gamma_{\sdimn}(x)\,\d x\Big)^{1/3}\Big(\int_{\Sigma}\vnorm{\nabla A}^{18/10}\gamma_{\sdimn}(x)\,\d x\Big)^{2/3}
<\infty.
\end{flalign*}

Now, let $\phi\in C_{0}^{\infty}(\Sigma)$, Let $h\colonequals\max(\smax(A),0)$, and integrate by parts with Lemma \ref{lemma39.7},
\begin{equation}\label{ibpeq}
\begin{aligned}
\int_{\Sigma}\phi h\mathcal{L} h\gamma_{\sdimn}(x)dx
&=-\int_{\{x\in\Sigma\colon \smax(A)\geq0\}}\phi\vnorm{\nabla\smax(A)}^{2}\gamma_{\sdimn}(x)dx\\
&\qquad-\int_{\{x\in\Sigma\colon \smax(A)\geq0\}}\smax(A)\langle\nabla\smax(A),\nabla\phi\rangle\gamma_{\sdimn}(x)dx.
\end{aligned}
\end{equation}
Let $\phi$ be an approximation to the identity supported on a ball of radius $r>0$ as in Lemma \ref{lemma39.2}.  Letting $r\to\infty$ and using $\int_{\Sigma}\vnorm{\smax(A)\nabla\smax(A)}^{6/5}\gamma_{\sdimn}(x)\,\d x<\infty$, the last integral in \eqref{ibpeq} goes to zero by e.g. \cite[Corollary 5.3]{zhu16}.  Also, $\nabla\smax(A)\stackrel{\eqref{chat3}}{=}\mathrm{Tr}(e^{\beta A}\nabla A)/\mathrm{Tr}(e^{\beta A})$, so that $\vnorm{\nabla\smax(A)}^{2}\leq\vnorm{\nabla A}^{2}$, so the term $\int_{\Sigma}\phi\vnorm{\nabla\smax(A)}^{2}\gamma_{\sdimn}(x)dx$ in \eqref{ibpeq} converges as $r\to\infty$, since $\int_{\Sigma}\vnorm{\nabla A}^{2}\gamma_{\sdimn}(x)\,\d x<\infty$ by Lemma \ref{lemma39.2}.  We therefore conclude that the first integral in \eqref{ibpeq} converges as $r\to\infty$.  Observe that
\begin{equation}\label{dbpeq}
\begin{aligned}
&\int_{\Sigma}\phi h\mathcal{L}(h)\gamma_{\sdimn}(x)dx
=\int_{\{x\in\Sigma\colon\smax(A)\geq0\}}\phi\,\smax(A)\mathcal{L}(\smax(A))\gamma_{\sdimn}(x)dx\\
&\stackrel{\eqref{lsofeq}}{=}\int_{\{x\in\Sigma\colon\smax(A)\geq0\}}\phi\,\smax(A)\Big[\mathrm{Tr}\Big(\frac{e^{\beta A}}{\mathrm{Tr}(e^{\beta A})}\mathcal{L} A\Big)\\
&\qquad\qquad\qquad\qquad\qquad\qquad+\beta\sum_{i=1}^{\sdimn}\mathrm{Tr}\Big(\frac{e^{\beta A}}{\mathrm{Tr}(e^{\beta A})}\Big[\nabla_{e_{i}}A-I_{\sdimn}\mathrm{Tr}\Big(\frac{e^{\beta A}}{\mathrm{Tr}(e^{\beta A})}\nabla_{e_{i}}A\Big)\Big]^{2}\Big)
\Big]\gamma_{\sdimn}(x)dx\\
&\stackrel{\eqref{three9p}}{=}\int_{\{x\in\Sigma\colon\smax(A)\geq0\}}\phi\,\smax(A)\Big[\mathrm{Tr}\Big(\frac{e^{\beta A}}{\mathrm{Tr}(e^{\beta A})}[2A-\lambda A^{2}-A(\vnorm{A}^{2}+1)]\Big)\\
&\qquad\qquad\qquad\qquad\qquad\qquad+\beta\sum_{i=1}^{\sdimn}\mathrm{Tr}\Big(\frac{e^{\beta A}}{\mathrm{Tr}(e^{\beta A})}\Big[\nabla_{e_{i}}A-I_{\sdimn}\mathrm{Tr}\Big(\frac{e^{\beta A}}{\mathrm{Tr}(e^{\beta A})}\nabla_{e_{i}}A\Big)\Big]^{2}\Big)
\Big]\gamma_{\sdimn}(x)dx.
\end{aligned}
\end{equation}
For any $r,\beta>1$ we can bound all terms except the last one in absolute value by a constant plus
\begin{flalign*}
&\int_{\Sigma}\Big(2\vnorm{A}^{2}+\abs{\scon}\vnorm{A}^{3}+\vnorm{A}^{4}+\vnorm{A}^{3}\Big)\gamma_{\sdimn}(x)dx.
\end{flalign*}
This quantity is finite by Lemma \ref{lemma39.2}.  As observed in \eqref{ibpeq}, the first term in \eqref{dbpeq} converges to a finite value as $r\to\infty$.  Letting then $r\to\infty$, we conclude that
$$\int_{\{x\in\Sigma\colon\smax(A)\geq0\}}\smax(A)\beta\sum_{i=1}^{\sdimn}\mathrm{Tr}\Big(\frac{e^{\beta A}}{\mathrm{Tr}(e^{\beta A})}\Big[\nabla_{e_{i}}A-I_{\sdimn}\mathrm{Tr}\Big(\frac{e^{\beta A}}{\mathrm{Tr}(e^{\beta A})}\nabla_{e_{i}}A\Big)\Big]^{2}\Big)\gamma_{\sdimn}(x)\,\d x<\infty.$$
Repeating the above argument with $h\colonequals\max(-\smax(A),0)$, we also obtain
$$\int_{\{x\in\Sigma\colon\smax(A)\leq0\}}-\smax(A)\beta\sum_{i=1}^{\sdimn}\mathrm{Tr}\Big(\frac{e^{\beta A}}{\mathrm{Tr}(e^{\beta A})}\Big[\nabla_{e_{i}}A-I_{\sdimn}\mathrm{Tr}\Big(\frac{e^{\beta A}}{\mathrm{Tr}(e^{\beta A})}\nabla_{e_{i}}A\Big)\Big]^{2}\Big)\gamma_{\sdimn}(x)\,\d x<\infty.$$
Therefore,
$$\int_{\Sigma}\sum_{i=1}^{\sdimn}\abs{\smax(A)}\beta\sum_{i=1}^{\sdimn}\mathrm{Tr}\Big(\frac{e^{\beta A}}{\mathrm{Tr}(e^{\beta A})}\Big[\nabla_{e_{i}}A-I_{\sdimn}\mathrm{Tr}\Big(\frac{e^{\beta A}}{\mathrm{Tr}(e^{\beta A})}\nabla_{e_{i}}A\Big)\Big]^{2}\Big)\gamma_{\sdimn}(x)\,\d x<\infty.$$
We finally can conclude that
$$\int_{\Sigma}\abs{\smax(A)\mathcal{L}\smax(A)}\gamma_{\sdimn}(x)\, \d x<\infty.$$
All above upper bounds on these integrals did not depend on $\beta>1$, so we can additionally take the supremum over $\beta>1$ to conclude the proof. (Note that all quantities in \eqref{dbpeq} are finite after taking $\sup_{\beta>1}$, except possible for the last term.  Therefore, the last term is also bounded after taking the supremum over $\beta>1$.)

\end{proof}

\section{Stable Mean Convex Sets are Convex}

\begin{theorem}\label{thm6}
Let $\Omega\subset\R^{\adimn}$.  Assume that $\Omega$ is mean convex (i.e. $H$ does not change sign on $\partial\Omega$).  Assume $\Omega\times\R$ minimizes Problem \ref{prob1}.  Then $\Omega$ or $\Omega^{c}$ is convex.
\end{theorem}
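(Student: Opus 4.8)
The proof splits according to the sign of the constant $\scon$ furnished by Lemma \ref{varlem}, and the substance lies in the case $\scon<0$. Since $\Omega\times\R$ minimizes Problem \ref{prob1}, Lemma \ref{varlem} gives $\scon\in\R$ with $H(x)=\langle x,N(x)\rangle+\scon$ on $\redb(\Omega\times\R)=\Sigma\times\R$, where $\Sigma\colonequals\redb\Omega$; because the extra Euclidean factor is totally geodesic, this equation---together with polynomial volume growth, finiteness of $\pcon$, the bound on the singular set, and the curvature bounds---descends from $\Sigma\times\R$ to $\Sigma$, so Lemmas \ref{polyvollem}, \ref{lemma95.1}, \ref{lemma51}, \ref{lemma39.2} and \ref{smaxlemma} apply with $\Sigma$ in place of the boundary of a minimizer. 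As $\Omega$ is mean convex and the desired conclusion is invariant under $\Omega\leftrightarrow\Omega^{c}$, I may assume $H\geq0$ on $\Sigma$ (replacing $\Omega$ by $\Omega^{c}$ reverses the sign of $\scon$). If $\scon=0$, then $\Sigma\times\R$ satisfies \eqref{one2}, and Theorem \ref{thm3} applied to $\Omega\times\R$ gives $\Sigma\times\R=\sqrt{k}S^{k}\times\R^{\adimn-k}$; noncompactness forces $k\leq\sdimn$, so $\Sigma=\sqrt{k}S^{k}\times\R^{\sdimn-k}$ and $\Omega$ or $\Omega^{c}$ is convex. If $\scon>0$, Theorem \ref{cor7} gives $\partial\Omega=\scon S^{0}\times\R^{\sdimn}$ (the boundary of a slab), so $\Omega$ or $\Omega^{c}$ is convex. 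This leaves the case $\scon<0$, $H\geq0$.

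Assume $\scon<0$ and $H\geq0$, and suppose toward a contradiction that $\Omega$ is not convex. With the conventions of \S\ref{seccurvature}, convexity of $\Omega$ is equivalent to the largest eigenvalue $\alpha$ of the second fundamental form $A$ being $\leq0$ everywhere on $\Sigma$; hence there are $\epsilon_{0}>0$ and a relatively open $U\subset\Sigma$ with $\gamma_{\sdimn}(U)>0$ on which $\alpha\geq2\epsilon_{0}$. For $\beta>1$ set $\alpha_{\beta}\colonequals\smax(A)$ and $h\colonequals\max(\alpha_{\beta},0)$; since $\alpha_{\beta}\geq\alpha$ we have $h\geq2\epsilon_{0}$ on $U$, so $h\not\equiv0$, and $h$ is even because the eigenvalues of $A$ are even functions on the symmetric surface $\Sigma$. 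The point of using $\alpha_{\beta}$ rather than $H$ is that, precisely in this sign regime, Lemma \ref{lemma39.2} supplies $\int_{\Sigma}(\vnormt{A}^{2}+\vnormt{A}^{4}+\vnormt{\nabla A}^{2}+\vnormt{\nabla\vnormt{A}}^{2})\gamma_{\sdimn}(x)\,\d x<\infty$, and Lemma \ref{smaxlemma} then yields $\int_{\Sigma}(h^{2}+\vnormt{\nabla h}^{2}+\abs{h\mathcal{L}h})\gamma_{\sdimn}(x)\,\d x<\infty$, which is the integrability needed both to integrate by parts and to feed $h$ into Lemma \ref{orthlem}.

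Now write $C\colonequals e^{\beta A}/\mathrm{Tr}(e^{\beta A})$, a symmetric positive definite matrix of trace one. By Lemma \ref{softeig}, $\mathcal{L}\alpha_{\beta}\geq\mathrm{Tr}(C\,\mathcal{L}A)$; and since $L=\mathcal{L}+1+\vnormt{A}^{2}$, \eqref{three9p} gives $\mathcal{L}A=A-\scon A^{2}-\vnormt{A}^{2}A$, so that $\mathrm{Tr}(C\mathcal{L}A)=(1-\vnormt{A}^{2})\mathrm{Tr}(CA)-\scon\mathrm{Tr}(CA^{2})$. Using the elementary estimate $0\leq\alpha_{\beta}-\mathrm{Tr}(CA)\leq c_{\sdimn}/\beta$ together with $-\scon=\abs{\scon}>0$, this rearranges, on the regular part of $\Sigma$, to
$$L\alpha_{\beta}\;\geq\;2\alpha_{\beta}-\frac{c_{\sdimn}}{\beta}\big(1+\vnormt{A}^{2}\big)+\abs{\scon}\,\mathrm{Tr}(CA^{2}).$$
Multiplying by $h$, integrating over $\Sigma$, and using the integration-by-parts identity for $h=\max(\alpha_{\beta},0)$ (legitimate by Lemma \ref{smaxlemma} and Corollary \ref{lemma39.79}),
$$\int_{\Sigma}hLh\,\gamma_{\sdimn}(x)\,\d x\;\geq\;2\int_{\Sigma}h^{2}\gamma_{\sdimn}(x)\,\d x-\frac{c_{\sdimn}}{\beta}\int_{\Sigma}h\big(1+\vnormt{A}^{2}\big)\gamma_{\sdimn}(x)\,\d x+\abs{\scon}\int_{\{h>0\}}h\,\mathrm{Tr}(CA^{2})\gamma_{\sdimn}(x)\,\d x.$$
Since $h\leq\vnormt{A}+\log\sdimn$ for $\beta\geq1$, the middle integral is bounded by a $\beta$-independent multiple of $\int_{\Sigma}(1+\vnormt{A}^{4})\gamma_{\sdimn}(x)\,\d x<\infty$, so that term is $O(1/\beta)$; and since $\mathrm{Tr}(CA^{2})\to\alpha^{2}$ and $h\to\max(\alpha,0)$ pointwise as $\beta\to\infty$, Fatou's lemma applied on $U$ gives $\liminf_{\beta\to\infty}\int_{\{h>0\}}h\,\mathrm{Tr}(CA^{2})\gamma_{\sdimn}(x)\,\d x\geq 8\epsilon_{0}^{3}\gamma_{\sdimn}(U)>0$. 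Combining these with the $\beta$-uniform bound $\int_{\Sigma}h^{2}\gamma_{\sdimn}(x)\,\d x\leq M_{0}<\infty$, we get, for $\beta$ large enough, $\int_{\Sigma}hLh\,\gamma_{\sdimn}(x)\,\d x\geq\delta\int_{\Sigma}h^{2}\gamma_{\sdimn}(x)\,\d x$ for some $\delta>2$.

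Finally, applying Lemma \ref{orthlem} with $f\colonequals h$ and this $\delta$ produces a symmetric, Gaussian-volume-preserving variation of $\Sigma\times\R=\redb(\Omega\times\R)$---the one corresponding to $g(x)=(x_{\sdimn+2}^{2}-1)h(x_{1},\ldots,x_{\adimn})$---satisfying
$$\frac{\d^{2}}{\d s^{2}}\Big|_{s=0}\int_{(\Sigma\times\R)^{(s)}}\gamma_{\adimn}(x)\,\d x\;\leq\;-(\delta-2)\int_{\Sigma\times\R}\abs{g(x)}^{2}\gamma_{\adimn}(x)\,\d x\;<\;0,$$
the last inequality strict because $h\not\equiv0$. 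This contradicts the minimality of $\Omega\times\R$ for Problem \ref{prob1}, so $\Omega$ is convex; undoing the initial reduction, $\Omega$ or $\Omega^{c}$ is convex in every case, which completes the proof. The main obstacle is exactly the case $\scon<0$, and inside it the calibration of the smoothing parameter $\beta$: it must be taken large enough that the error introduced by the soft-max smoothing in Lemma \ref{softeig} is swamped by the strictly positive term $\abs{\scon}\,\mathrm{Tr}(CA^{2})$ coming from the sign of $\scon$, while the a priori estimates of Lemma \ref{smaxlemma} must stay uniform in $\beta$ so that Lemma \ref{orthlem} still applies. This case must be isolated because the curvature bound of Lemma \ref{lemma39.2}---which is what makes the smoothed largest eigenvalue an admissible test function at all---holds only when $H$ and $\scon$ have opposite signs.
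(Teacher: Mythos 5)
Your proposal is correct and follows the same route as the paper: reduce to the case $\scon<0$, $H\geq0$ via Lemma \ref{varlem}, Theorem \ref{thm3}, and Theorem \ref{cor7}; use the curvature bound of Lemma \ref{lemma39.2} to make the smoothed largest eigenvalue $h=\max(\smax(A),0)$ an admissible test function (Lemma \ref{smaxlemma}); plug Lemma \ref{softeig} and \eqref{three9p} into $\int hLh$; take $\beta$ large to get the eigenvalue ratio strictly above $2$; and finish with Lemma \ref{orthlem}. The only real difference is presentational: you extract an explicit pointwise lower bound $L\alpha_{\beta}\geq2\alpha_{\beta}-\frac{c_{\sdimn}}{\beta}(1+\vnormt{A}^{2})+\absf{\scon}\mathrm{Tr}(CA^{2})$ using the entropy bound $0\leq\alpha_{\beta}-\mathrm{Tr}(CA)\leq c_{\sdimn}/\beta$, then use Fatou to show the $\absf{\scon}$-term survives, whereas the paper keeps the exact integral identity and passes directly to the $\beta\to\infty$ limit via dominated convergence, working with $g=\max(\alpha_{\mathrm{max}}(A),0)$ in the limit. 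Both routes certify that for some large finite $\beta$ the ratio $\int hLh/\int h^{2}$ exceeds $2$, and your error-tracking version is, if anything, slightly more explicit about why.
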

\begin{proof}
From Lemma \ref{varlem}, there exists $\scon\in\R$ such that
$$H(x)=\langle x,N(x)\rangle+\scon,\qquad\forall\,x\in\Sigma.$$
The case $\scon=0$ was already treated in Theorem \ref{thm3}.  We may therefore assume that $\scon\neq0$.  Replacing $\Omega$ with $\Omega^{c}$ (i.e. changing the direction of the unit exterior normal vector, which changes the signs of $H$ and $N$ and therefore of $\scon$), we may assume that $\scon<0$.  We may additionally assume that $H\geq0$ on $\Sigma$, since the case $H\leq0$ and $\scon<0$ was treated already in Theorem \ref{cor7}.

Since we assumed $H\geq0$ and $\scon<0$, we may freely use Lemma \ref{lemma39.2}.

Recall the definition of $\smax(A)$ in \eqref{smaxdef}.  Let $h\colonequals\max(\smax(A),0)$.  We may assume that there exists $\beta>0$ such that $h>0$ on a set of positive measure on $\Sigma$, otherwise all eigenvalues of $A$ are negative, i.e. $\Omega$ is convex, and the proof is complete.  From Lemma \ref{softeig},
\begin{equation}\label{six0h}
\begin{aligned}
&\int_{\Sigma}h Lh\gamma_{\sdimn}(x)\,\d x
\stackrel{\eqref{three4.3}\wedge\eqref{three4.5}}{=}\int_{\Sigma}\Big(h \mathcal{L}h+h^{2}(\vnorm{A}^{2}+1)\Big)\gamma_{\sdimn}(x)\,\d x\\
&=\int_{\Sigma}\Big[h^{2}(\vnorm{A}^{2}+1)+h\mathrm{Tr}\Big(\frac{e^{\beta A}}{\mathrm{Tr}(e^{\beta A})}\mathcal{L} A\Big)\\
&\qquad\qquad\qquad\qquad\qquad\qquad+\beta\sum_{i=1}^{\sdimn}\mathrm{Tr}\Big(\frac{e^{\beta A}}{\mathrm{Tr}(e^{\beta A})}\Big[\nabla_{e_{i}}A-I_{\sdimn}\mathrm{Tr}\Big(\frac{e^{\beta A}}{\mathrm{Tr}(e^{\beta A})}\nabla_{e_{i}}A\Big)\Big]^{2}\Big)\gamma_{\sdimn}(x)\,\d x\\
&\stackrel{\eqref{three4.5}}{=}\int_{\Sigma}\Big[h^{2}(\vnorm{A}^{2}+1)-h(\vnorm{A}^{2}+1)\mathrm{Tr}\Big(\frac{e^{\beta A}}{\mathrm{Tr}(e^{\beta A})}A\Big)+h\mathrm{Tr}\Big(\frac{e^{\beta A}}{\mathrm{Tr}(e^{\beta A})}LA\Big)\\
&\qquad\qquad\qquad\qquad\qquad\qquad+\beta\sum_{i=1}^{\sdimn}\mathrm{Tr}\Big(\frac{e^{\beta A}}{\mathrm{Tr}(e^{\beta A})}\Big[\nabla_{e_{i}}A-I_{\sdimn}\mathrm{Tr}\Big(\frac{e^{\beta A}}{\mathrm{Tr}(e^{\beta A})}\nabla_{e_{i}}A\Big)\Big]^{2}\Big)\gamma_{\sdimn}(x)\,\d x.
\end{aligned}
\end{equation}
(The quantity $\int_{\partial\Omega}h Lh\gamma_{\sdimn}(x)\,\d x$ is finite a priori by Lemma \ref{smaxlemma}.  And when we integrate by parts with $h$ in the second variation formula, that will be justified by Lemma \ref{smaxlemma} and Lemma \ref{lemma39.79}.)  The final term is nonnegative.  Using this and \eqref{three9p},
\begin{equation}\label{six1h}
\int_{\Sigma}h Lh\gamma_{\sdimn}(x)\,\d x
\geq \int_{\Sigma}\Big(h(\vnorm{A}^{2}+1)\Big[ h-\mathrm{Tr}\Big(\frac{A e^{\beta A}}{\mathrm{Tr}e^{\beta A}}\Big)\Big]+h\mathrm{Tr}\Big(\frac{e^{\beta A}}{\mathrm{Tr}e^{\beta A}}[2A-\lambda A^{2}]\Big)\gamma_{\sdimn}(x)\,\d x.
\end{equation}
We now let $\beta\to\infty$.  Then $h$ converges to the maximum of $0$ and the maximum eigenvalue $\alpha_{\mathrm{max}}(A)$ of $A$, as does $\mathrm{Tr}(\frac{A e^{\beta A}}{\mathrm{Tr}e^{\beta A}})$.  So, the first term vanishes by the Dominated Convergence Theorem and Lemma \ref{lemma39.2}.  Denote $g\colonequals\max(\alpha_{\mathrm{max}}(A),0)$.  Similarly, the Dominated Convergence Theorem gives
\begin{equation}\label{six2h}
\lim_{\beta\to\infty}\int_{\Sigma}h Lh\gamma_{\sdimn}(x)\,\d x
\geq \int_{\Sigma}g(2g-\scon g)\Big)\gamma_{\sdimn}(x)\,\d x.
\end{equation}
Recall that we assumed that $\scon<0$ and $h\geq0$, therefore $g\geq0$ and
$$
\lim_{\beta\to\infty}\int_{\Sigma}h Lh\gamma_{\sdimn}(x)\,\d x
>\int_{\Sigma}2g^{2}\gamma_{\sdimn}(x)\,\d x.
$$
(Equality cannot occur here since $g=0$ would imply that $\Omega$ is convex, and the proof would be complete.)  Since $\lim_{\beta\to\infty}\int_{\Sigma}h^{2}\gamma_{\sdimn}(x)\,\d x=\int_{\Sigma}g^{2}\gamma_{\sdimn}(x)\,\d x$, we have
\begin{equation}\label{six1r}
\lim_{\beta\to\infty}\frac{\int_{\Sigma}h Lh\gamma_{\sdimn}(x)\,\d x}{\int_{\Sigma}h^{2}\gamma_{\sdimn}(x)\,\d x}>2.
\end{equation}
we can now conclude the proof as in Theorem \ref{cor7}.

Let $f\colonequals h$ in Lemma \ref{orthlem}.  Note that $h(x)=h(-x)$ for all $x\in\Sigma$.  Then Lemma \ref{orthlem} and \eqref{six1r} imply that the variation of $\Omega\times\R$ correspond to $g$ satisfies
$$\frac{\d^{2}}{\d s^{2}}\Big|_{s=0}\int_{(\Sigma\times\R)^{(s)}}\gamma_{\sdimn}(x)dx<0.$$
This inequality violates the minimality of $\Omega$, achieving a contradiction, and completing the proof.
\end{proof}

\section{Symmetric Minimal Cones are Unstable}\label{seccone}

In this Section, we prove Case 3 of Theorem \ref{thm3}.  That is, we show that symmetric minimal cones are unstable for the Gaussian surface area, within the category of symmetric sets.  That is, a symmetric minimal cone can be perturbed to another nearby symmetric set with smaller Gaussian surface area, in a way that preserves the Gaussian volume of the set.

\begin{proof}[Proof of Case 3 of Theorem \ref{thm3}]
In this proof, in order to match the notation and formulas from \cite{zhu16}, we use a factor of $4$ in our Gaussian density, rather than a factor of $2$.

From the regularity part of Lemma \ref{lemma51}, since $H(x)=\langle x,N(x)\rangle=0$ for all $x\in\Sigma$ and $\Sigma$ is a cone (i.e. $\Sigma$ has a singularity at the origin), we must have $\sdimn\geq7$. Let $S^{\sdimn}\colonequals\{x\in\R^{\adimn}\colon\vnorm{x}=1\}$.  Let $W\colonequals\Sigma\cap S^{\sdimn}$ denote the cone $\Sigma$ intersected with the sphere, and let $M$ denote the regular part of $W$.  From the second variation formula, Lemma \ref{varlem2}, recall that
\begin{equation}\label{four8}
\frac{\d^{2}}{\d s^{2}}\Big|_{s=0}\int_{\Sigma^{(s)}}e^{-\vnorm{x}^{2}/4}(4\pi)^{-\sdimn/2}\,\d x
=-\int_{\Sigma}f(x)L'f(x)e^{-\vnorm{x}^{2}/4}(4\pi)^{-\sdimn/2}\,\d x,
\end{equation}
where, as opposed to \eqref{three4.5}, we now have \cite[Equation (1.12)]{zhu16},
\begin{equation}\label{threez}
L' \colonequals \Delta -\frac{1}{2}\langle x,\nabla \rangle+\frac{1}{2}+\vnormt{A}^{2}.
\end{equation}
For any $x\in\R^{\adimn}$, define $r=r(x)\colonequals\vnorm{x}$.  Using \cite[Equation (4.18)]{zhu16}, we can decompose the operator $L'$ from \eqref{threez} into its radial component and its spherical component
\begin{equation}\label{four7}
L' = r^{-2}(\widetilde{L}-(n-1)+L_{1}).
\end{equation}
Here $\widetilde{L}$ is the spherical component \cite[Equation (4.13)]{zhu16}
$$\widetilde{L}\colonequals \Delta_{M}+\vnormf{\widetilde{A}}^{2}+(\sdimn-1),$$
where $\widetilde{A}$ denotes the second fundamental form of $M$, and $L_{1}$ is the radial component
$$L_{1}\colonequals r^{2}\frac{\partial^{2}}{\partial r^{2}}+ (n-1)r\frac{\partial}{\partial r} -\frac{r^{3}}{2}\frac{\partial}{\partial r}+\frac{r^{2}}{2}.$$
We can then rewrite the quadratic form from \eqref{four8} as
\begin{equation}\label{four10}
\int_{\Sigma}f(x)L'f(x)\frac{e^{-\frac{\vnorm{x}^{2}}{4}}\,\d x}{(4\pi)^{\frac{n}{2}}}
=\int_{\Sigma\cap S^{n}}\int_{r=0}^{r=\infty}r^{n-3}f(r,\theta)(\widetilde{L}-(n-1)+L_{1})f(r,\theta) e^{-\frac{r^{2}}{4}}\frac{\d r \d\theta}{(4\pi)^{\frac{n}{2}}}.
\end{equation}
We first note that
\begin{equation}\label{four9}
L_{1}r=(n-1)r.
\end{equation}

Let $t(r)\colonequals r+\frac{4-2n}{r}$, $\forall$ $r>0$.  Then $t(r)$ is an eigenfunction of $L_{1}$, since
\begin{flalign*}
L_{1}t(r)
&\stackrel{\eqref{four9}}{=}(n-1)r + (4-2n)\Big(r^{2}2r^{-3}+(n-1)r(-r^{-2})-\frac{1}{2}r^{3}(-r^{-2})+\frac{r^{2}}{2}r^{-1}\Big)\\
&=(n-1)r + (4-2n)(2r^{-1}-(n-1)r^{-1}+r)\\
&=(-n+3)r + (4-2n)(3-n)r^{-1}
=-(n-3)f.
\end{flalign*}
Let $g(r)\colonequals r$, $\forall$ $r>0$.  Integrating by parts we get
\begin{equation}\label{four13}
\int_{0}^{\infty}r^{n-1}e^{-r^{2}/4}\,\d r=2(n-2)\int_{0}^{\infty}r^{n-3}e^{-r^{2}/4}\,\d r.
\end{equation}
We then obtain


\begin{equation}\label{four4}
\int_{0}^{\infty}t(r)g(r)r^{n-3}e^{-r^{2}/4}\,\d r=\int_{0}^{\infty}[r^{n-1}+(4-2n)r^{n-3}]e^{-r^{2}/4}\,\d r\stackrel{\eqref{four13}}{=}0.
\end{equation}
(Recall $\sdimn\geq7$ so this integral is finite a priori.)  And if $h(r)\colonequals  r+\frac{2-2n}{r}$, then
\begin{equation}\label{four2}
\int_{0}^{\infty} h(r)r^{n-1}e^{-r^{2}/4}\,\d r=\int_{0}^{\infty}[r^{n}+(2-2n)r^{n-2}]e^{-r^{2}/4}\,\d r=0.
\end{equation}
So, $h$ corresponds to a Gaussian volume-preserving perturbation.  Also, for all $r>0$,
\begin{equation}\label{four5}
\begin{aligned}
h(r)
&=\Big(r+\frac{4-2n}{r}\Big)\frac{2-2n}{4-2n}+r-\frac{2-2n}{4-2n}r\\
&=\Big(r+\frac{4-2n}{r}\Big)\frac{n-1}{n-2}+r\frac{2}{4-2n}
=\frac{n-1}{n-2}t(r)-\frac{1}{n-2}g(r).
\end{aligned}
\end{equation}
So, using $(L_{1}-(n-1))g\stackrel{\eqref{four9}}{=}0$, we have
\begin{equation}\label{four6}
\begin{aligned}
(L_{1}-(n-1))h(r)
&\stackrel{\eqref{four5}}{=}(L_{1}-(n-1))\frac{n-1}{n-2}t(r)
=\frac{n-1}{n-2}(-(n-3) - (n-1))t(r)\\
&=\frac{n-1}{n-2}(-2n+4)t(r)
=-2(n-1)t(r).
\end{aligned}
\end{equation}

Now, we will construct a function $f\colon\Sigma\to\R$ using a product of a radial function and a spherical function.  Define
\begin{equation}\label{fdef}
f(r,\theta)\colonequals\phi(\theta)h(r),\qquad\forall\,r>0,\,\,\forall\,\theta\in M.
\end{equation}
Starting with the radial integral in \eqref{four10}, we have
\begin{equation}\label{four11}
\begin{aligned}
&\int_{r=0}^{r=\infty}r^{n-3}f(\widetilde{L}-(n-1)+L_{1})f e^{-r^{2}/4}\,\d r \\
&\stackrel{\eqref{four6}}{=}\int_{r=0}^{r=\infty}r^{n-3}f\Big[\widetilde{L}f-2(n-1)t(r)\phi  \Big] e^{-r^{2}/4}\,\d r \\
&\stackrel{\eqref{four5}}{=}\int_{r=0}^{r=\infty}r^{n-3}\Big[f\widetilde{L}f-\Big(\frac{n-1}{n-2}t(r)-\frac{1}{n-2}g(r)\Big)(2(n-1)[t(r)]\phi^{2})  \Big] e^{-r^{2}/4}\,\d r \\
&\stackrel{\eqref{four4}}{=}\int_{r=0}^{r=\infty}r^{n-3}\Big[f\widetilde{L}f-\frac{n-1}{n-2}2(n-1)[t(r)]^{2}\phi^{2}  \Big] e^{-r^{2}/4}\,\d r \\
&\stackrel{\eqref{four4}\wedge\eqref{four5}}{=}\int_{r=0}^{r=\infty}r^{n-3}\Big[\phi\widetilde{L}\phi\Big[\Big(\frac{n-1}{n-2}\Big)^{2} [t(r)]^{2}+\frac{1}{(n-2)^{2}}[g(r)]^{2}\Big]\\
&\qquad\qquad\qquad\qquad\qquad\qquad\qquad\qquad\qquad\qquad-\frac{n-1}{n-2}2(n-1)[t(r)]^{2}\phi^{2}  \Big] e^{-r^{2}/4}\,\d r.
\end{aligned}
\end{equation}
(Recall $\sdimn\geq7$ so the above integrals are finite.)  (Even though $h$ is unbounded near the origin, we can use $h$ in the second variation formula by multiplying by cutoff functions, as in \cite[Lemma 6.6, Proposition 6.7]{zhu16}.)

We now use \cite[Theorem 0.3]{zhu16} (see also the proof of Theorem 8.3 in \cite{zhu16}) to find a nonnegative Dirichlet eigenfunction $\phi\colon M\to\R$ of $\widetilde{L}$ such that
\begin{equation}\label{leqn}
\widetilde{L}\phi=\kappa\phi,
\end{equation}
with $\kappa\geq 2(\sdimn-1)$, and such that $\phi(-\theta)=\phi(\theta)$ for all $\theta\in M$.  (Since $\Omega$ is symmetric, $\Sigma$ cannot be a hyperplane through the origin, i.e. $M$ is not totally geodesic, so \cite[Theorem 0.3]{zhu16} applies.  Also, since $\phi$ is nonnegative and $\Sigma$ is symmetric, $\phi(\cdot)+\phi(-\cdot)$ is a nonnegative eigenfunction of $\widetilde{L}$, i.e. we may assume a priori that $\phi$ itself is symmetric.)

Plugging \eqref{leqn} into \eqref{four11} and using the inequalities $[g(r)]^{2}>0$ and $\big(\frac{n-1}{n-2}\big)^{2}>\frac{n-1}{n-2}>1$,
\begin{equation}\label{four22}
\begin{aligned}
&\int_{r=0}^{r=\infty}r^{n-3}f(\widetilde{L}-(n-1)+L_{1})f e^{-r^{2}/4}\,\d r \\
&=\int_{r=0}^{r=\infty}r^{n-3}\Big[\kappa\phi^{2}\Big[\Big(\frac{n-1}{n-2}\Big)^{2} [t(r)]^{2}+\frac{1}{(n-2)^{2}}[g(r)]^{2}\Big]\\
&\qquad\qquad\qquad\qquad\qquad\qquad\qquad\qquad\qquad\qquad-\frac{n-1}{n-2}2(n-1)[t(r)]^{2}\phi^{2}  \Big] e^{-r^{2}/4}\,\d r \\
&>\int_{r=0}^{r=\infty}r^{n-3}\Big[\kappa\phi^{2} [t(r)]^{2}-2(n-1)[t(r)]^{2}\phi^{2}  \Big] e^{-r^{2}/4}\,\d r \\
&=\Big[\kappa\phi^{2}-2(n-1)\phi^{2}  \Big] \int_{r=0}^{r=\infty}r^{n-3}(t(r))^{2}e^{-r^{2}/4}\,\d r .
\end{aligned}
\end{equation}

Combining \eqref{four8}, \eqref{four10} and \eqref{four22}, then using $\kappa\geq2(n-1)$  \cite[Theorem 0.3]{zhu16},
\begin{equation}\label{four12}
\begin{aligned}
&\frac{\d^{2}}{\d s^{2}}\Big|_{s=0}\int_{\Sigma^{(s)}}e^{-\vnorm{x}^{2}/4}(4\pi)^{-\sdimn/2}\,\d x\\
&\quad<\int_{M}\Big[-\kappa[\phi(\theta)]^{2}+2(n-1)[\phi(\theta)]^{2}  \Big] \,\d\theta\cdot \int_{r=0}^{r=\infty}r^{n-3}(t(r))^{2}e^{-r^{2}/4}\,\d r\cdot (4\pi)^{-\sdimn/2}\leq0.
\end{aligned}
\end{equation}

Then \eqref{four12} implies that we have found a function $f$ such that $f(-x)=f(x)$ for all $x\in\Sigma$ with $\frac{\d^{2}}{\d s^{2}}|_{s=0}\int_{\Sigma^{(s)}}e^{-\vnorm{x}^{2}/4}(4\pi)^{-\sdimn/2}\,\d x<0$.  Moreover, from \eqref{four2}, \eqref{fdef} and Fubini's Theorem,
$$\int_{\Sigma}f(x) e^{-\vnorm{x}^{2}/4}\,\d x=0.$$
That is, we have shown that $\Sigma$ is unstable within the category of symmetric sets, i.e. $\Omega$ cannot minimize Problem \ref{prob1}.
\end{proof}

\medskip
\noindent\textbf{Acknowledgement}.  Thanks to Galyna Livshyts and Jonathan Zhu for helpful discussions.

\bibliographystyle{amsalpha}
%
\def\polhk#1{\setbox0=\hbox{#1}{\ooalign{\hidewidth
  \lower1.5ex\hbox{`}\hidewidth\crcr\unhbox0}}} \def\cprime{$'$}
  \def\cprime{$'$}
\providecommand{\bysame}{\leavevmode\hbox to3em{\hrulefill}\thinspace}
\providecommand{\MR}{\relax\ifhmode\unskip\space\fi MR }
\providecommand{\MRhref}[2]{%
  \href{http://www.ams.org/mathscinet-getitem?mr=#1}{#2}
}
\providecommand{\href}[2]{#2}

\end{document}